\documentclass[11pt, reqno]{amsart}

\usepackage[top=1.5in,bottom=1.3in,left=1.3in,right=1.3in,marginparwidth=1.5cm]{geometry}

\usepackage{times}
\usepackage{amsmath, amssymb, amsfonts, latexsym, mdwlist, amsthm}
\usepackage{bm}
\usepackage{caption, subcaption}
\usepackage{url}
\usepackage[all,cmtip]{xy}
\usepackage{mathtools}
\usepackage{graphicx} 
\usepackage{tikz-cd}

\usepackage[bookmarks, colorlinks, breaklinks]{hyperref}

\hypersetup{linkcolor=darkblue,citecolor=darkblue,filecolor=black,urlcolor=darkblue, linktocpage = true}

\definecolor{darkblue}{rgb}{0,0,.6}

\usepackage{todonotes}
\usepackage{comment}

\usepackage[mathscr]{eucal}

\def\cA{\ensuremath{\mathscr A}}
\def\cB{\ensuremath{\mathcal B}}
\def\cC{\ensuremath{\mathscr C}}
\def\cD{\ensuremath{\mathscr D}}

\def\cE{\ensuremath{\mathcal E}}
\def\cF{\ensuremath{\mathcal F}}

\def\cM{\ensuremath{\mathcal M}}

\def\cO{\ensuremath{\mathcal O}}

\def\cT{\ensuremath{\mathcal T}}
\def\cU{\ensuremath{\mathcal U}}

\newcommand{\rC}{\mathrm{C}}
\newcommand{\rD}{\mathrm{D}}

\newcommand{\rG}{\mathrm{G}}

\newcommand{\rH}{\mathrm{H}}
\newcommand{\rh}{\mathrm{h}}
\newcommand{\rK}{\mathrm{K}}

\newcommand{\rM}{\mathrm{M}}

\def\bG{\ensuremath{\mathbf{G}}}

\def\bP{\ensuremath{\mathbf{P}}}
\def\bQ{\ensuremath{\mathbf{Q}}}
\def\bZ{\ensuremath{\mathbf{Z}}}
\def\bR{\ensuremath{\mathbf{R}}}

\def\bC{\ensuremath{\mathbf{C}}}

\def\bv{\ensuremath{\mathbf{v}}}

\def\Br{\mathop{\mathrm{Br}}\nolimits}
\def\Cat{\mathop{\mathrm{Cat}}}
\def\ch{\mathop{\mathrm{ch}}\nolimits}

\def\Coh{\mathop{\mathrm{Coh}}\nolimits}

\def\Db{\mathop{\mathrm{D}^{\mathrm{b}}}\nolimits}
\def\deg{\mathop{\mathrm{deg}}\nolimits}

\def\dim{\mathop{\mathrm{dim}}\nolimits}
\def\Dqc{\mathop{\mathrm{D}_{\mathrm{qc}}}\nolimits}
\def\ev{\mathop{\mathrm{ev}}\nolimits}

\def\Ext{\mathop{\mathrm{Ext}}\nolimits}

\def\Hilb{\mathop{\mathrm{Hilb}}\nolimits}
\def\hom{\mathop{\mathrm{hom}}\nolimits}
\def\Hom{\mathop{\mathrm{Hom}}\nolimits}

\def\id{\mathop{\mathrm{id}}\nolimits}

\def\Knum{\rK_{\num}}

\def\Ku{\mathop{\mathcal{K}u}\nolimits}

\def\num{\mathop{\mathrm{num}}\nolimits}

\def\Pic{\mathop{\mathrm{Pic}}\nolimits}

\def\Qcoh{\mathop{\mathrm{Qcoh}}\nolimits}

\def\rk{\mathop{\mathrm{rk}}}
\def\Sch{\mathop{{\mathrm{Sch}}}}
\def\Set{\mathop{\mathbf{Set}}}
\def\Sing{\mathop{\mathrm{Sing}}}

\def\Spec{\mathop{\mathrm{Spec}}}

\def\Stab{\mathop{\mathrm{Stab}}\nolimits}

\def\Top{\mathop{\mathbf{Top}}}

\newcommand{\Dperf}{\mathrm{D}_{\mathrm{perf}}}

\newcommand{\st}{\mathrm{st}}
\newcommand{\Mod}{\mathrm{Mod}}
\newcommand{\cHom}{\mathcal{H}\!{\it om}}
\newcommand{\Fun}{\mathrm{Fun}}
\newcommand{\Ind}{\mathrm{Ind}}
\DeclareMathOperator{\Map}{Map}

\DeclareMathOperator{\colim}{colim}

\newcommand{\bmu}{\bm{\mu}}

\DeclareMathOperator{\coev}{coev}

\newcommand{\gl}{\mathrm{gl}}

\newcommand{\set}[1]{\left\{ \, #1 \, \right\}}

\newcommand{\sth}{\;\vline\;}

\newcommand{\brK}{\overline{\mathrm{K}}}

\theoremstyle{plain}
\newtheorem{Thm}{Theorem}[section]
\newtheorem{Prop}[Thm]{Proposition}

\newtheorem{Lem}[Thm]{Lemma}
\newtheorem{Cor}[Thm]{Corollary}

\newtheorem{Ques}[Thm]{Question}

\newtheorem*{Ques*}{Question}

\theoremstyle{definition}
\newtheorem{Def}[Thm]{Definition}
\newtheorem{Rem}[Thm]{Remark}

\newtheorem{Ex}[Thm]{Example}

\makeatletter
\newtheoremstyle{italicsname}
 {3pt}
 {3pt}
 {\itshape}
 {}
{\bf}
 {.}
 {.5em}
 {\thmname{#1}\thmnumber{\@ifnotempty{#1}{ }#2}%
 \thmnote{ {\the\thm@notefont(#3)}}}
\makeatother
\theoremstyle{italicsname}

\usepackage{xpatch}
\makeatletter   
\xpatchcmd{\@tocline}
{\hfil\hbox to\@pnumwidth{\@tocpagenum{#7}}\par}
{\ifnum#1<0\hfill\else\dotfill\fi\hbox to\@pnumwidth{\@tocpagenum{#7}}\par}
{}{}
\makeatother 

\makeatletter
\renewcommand\part{%
   \if@noskipsec \leavevmode \fi
   \par
   \addvspace{4ex}%
   \@afterindentfalse
   \secdef\@part\@spart}

\def\@part[#1]#2{%
    \ifnum \c@secnumdepth >\m@ne
      \refstepcounter{part}%
      \addcontentsline{toc}{part}{Part \thepart.\hspace{1em}#1}%
    \else
      \addcontentsline{toc}{part}{#1}%
    \fi
    {\parindent \z@ \raggedright
     \interlinepenalty \@M
     \normalfont
     \ifnum \c@secnumdepth >\m@ne
     \centering 
     \Large\bfseries \partname\nobreakspace\thepart     
       \nobreak. 
     \fi
     \Large \bfseries { #2}%
     \par}%
    \nobreak
    \vskip 3ex
    \@afterheading}
\def\@spart#1{%
    {\parindent \z@ \raggedright
     \interlinepenalty \@M
     \normalfont
     \huge \bfseries #1\par}%
     \nobreak
     \vskip 3ex
     \@afterheading}
\makeatother

\renewcommand{\thepart}{\Roman{part}}

\makeatletter 
\def\l@subsection{\@tocline{2}{0pt}{3pc}{6pc}{}} 
\makeatother

\numberwithin{equation}{section}

\usepackage{paralist}
\setdefaultenum{(1)}{(a)}{}{}
\usepackage{enumitem} 

\makeatletter
\newtheorem*{rep@theorem}{\rep@title}
\newcommand{\newreptheorem}[2]{%
\newenvironment{rep#1}[1]{%
 \def\rep@title{#2 \ref{##1}}%
 \begin{rep@theorem}}%
 {\end{rep@theorem}}}
\makeatother

\def\Z2{\bZ/2}
\def\m2{\bmu_2}

\def\ev{\mathrm{ev}}

\def\op{\mathrm{op}}

\def\Kum{\mathrm{Kum}}

\begin{document}

\author{Laura Pertusi} 
\address{Dipartimento di Matematica ``F.\ Enriques'' \\
Via Cesare Saldini 50 \\
Universit\`a degli Studi di Milano \\
20133 Milano, Italy \smallskip 
}
\email{laura.pertusi@unimi.it}

\title{Noncommutative varieties and stability conditions: an overview}

\begin{abstract} 
Using the modern perspective of noncommutative algebraic geometry we survey some recent progress in the theory of stability conditions and moduli spaces with applications in hyperk\"ahler geometry and classical algebraic geometry.
\end{abstract}

\maketitle

\setcounter{tocdepth}{2}
\setcounter{tocdepth}{1}
\tableofcontents

\section{Introduction}

\subsection*{Background} Vector bundles on elliptic curves were classified by Atiyah in \cite{Atiyah}. In this case, indecomposable vector bundles with coprime rank and degree are parametrized by the curve itself, showing a different behavior compared to the case of the projective line where every vector bundle splits in the sum of line bundles as previously shown by Grothendieck. In general, to get a nice moduli space whose points are vector bundles on a smooth projective variety with some fixed numerical invariants, it is necessary to impose additional conditions ensuring that the set of sheaves one wants to parametrize is not unbounded; see \cite[\S 1.7]{HL:Moduli}.     

In 1962 Mumford made groundbreaking progress, introducing the notion of slope stability for vector bundles and proving that if $C$ is a smooth projective curve with genus $\geq 2$, then there is a smooth projective variety parametrizing slope semistable vector bundles with fixed rank and degree on $C$ \cite{Mumford}. Analogous results were obtained in higher dimension through the notion of Gieseker stability by the work of Gieseker, Maruyama, Simpson \cite{Gieseker, MaruyamaI, MaruyamaII, Simpson}. This also had a strong impact in differential geometry, in relation to irreducible unitary representations of the fundamental group of $C$ due to Narasimhan-Seshadri \cite{NaraSesha} and the higher dimensional generalizations by Donaldson and Uhlenbeck-Yau \cite{Donaldson, UhYau}.

Meanwhile, the notions of derived category of an abelian category and triangulated category introduced by Grothendieck and Verdier became a central topic in algebraic geometry. Since 1980, starting with the work of Beilinson on the case of the projective space, the bounded derived category $\Db(X)$ of coherent sheaves on a smooth projective variety $X$ has been deeply studied as a new invariant, leading to interesting applications in birational geometry and homological mirror symmetry.

In the celebrated works \cite{Bridgeland:Stab, Bridgeland:K3} Bridgeland introduced a notion of stability for complexes in derived categories, as a generalization of slope stability for vector bundles on curves. The original motivation for its establishment was to provide a mathematical framework for the concept of stability defined by Douglas in string theory. A key feature of stability conditions is that the variation of stability of an object behaves in a controlled way, as it can change when crossing walls inside the complex manifold parametrizing stability conditions on $\Db(X)$. This leads to powerful applications in the study of the birational geometry of moduli spaces of sheaves, as well as in hyperk\"ahler geometry, Brill--Noether theory and counting invariants. We suggest \cite{BM:effective} for an ``effective'' survey on this topic.

In this note, we consider the more general setting of categories having properties similar to $\Db(X)$ of a smooth proper variety $X$, but they do not necessarily arise from geometry. The main goal is to introduce the reader to the theory of stability conditions on these noncommutative smooth and proper schemes and to explain several applications related to the study of the geometry of the associated moduli spaces of semistable objects.

\subsection*{Problems and results}
Let $\cC$ be a smooth and proper linear category over a base scheme $S$ (see Definitions~\ref{def:SlinAlex}-\ref{def:smpr}). 
The first difficult problem concerns the construction of stability conditions on $\cC$ (see Definitions~\ref{def:stabcond_Bridgeland}-\ref{definition-stability-families}):
\begin{enumerate}
\item Are there stability conditions on $\cC$ over $S$?
\end{enumerate}
In \S~\ref{sec:stabonDX} we explain the recent developments on the construction of stability conditions when $\cC=\Db(X)$ of a smooth projective variety $X$ over a field. In particular, we recall the conjectural strategy introduced by Bayer, Macrì and Toda \cite{BMT:3folds-BG} using iterated tilting of slope stability and the new results on Calabi--Yau threefolds \cite{CLi:quintic,Soheylaetall}. Then we treat the case of product varieties $X \times C$, where $C$ is a smooth projective curve. In this case, there is an alternative construction of stability conditions due to Y.\ Liu \cite{Liu}. This is crucial for the construction of stability conditions on some hyperk\"ahler manifolds \cite{LMPSZ} and the recent progress of C.\ Li producing stability conditions on any smooth projective variety \cite{Chunyi:realreduction, Chunyi:remark}.

Let us return to the general setting of a smooth and proper noncommutative scheme $\cC$, and assume there is a stability condition $\sigma$ on $\cC$ with respect to some finite rank free abelian group $\Lambda$. For $v \in \Lambda$ denote by $\rM_\sigma(v)$ the associated moduli space of $\sigma$-stable objects in $\cC$ (see Theorem~\ref{thm:moduli}).
\begin{enumerate}
\item[(2)] When is $\rM_\sigma(v)$ non-empty?
\item[(3)] If $\rM_\sigma(v)$ is non-empty, when is it projective? irreducible? smooth? What is the Kodaira dimension of $\rM_\sigma(v)$? 
\end{enumerate}
In \S~\ref{sec:HKgeo} we discuss these questions together with the existence of stability conditions when $\cC$ is a $2$-Calabi–Yau category. The first studied example comes from cubic fourfolds. In this case, there is an associated noncommutative K3 surface $\Ku(Y)$ arising as a semiorthogonal component of $\Db(Y)$ for a cubic fourfold $Y$. The theory of stability conditions with moduli spaces of stable objects therein leads to the construction of new locally complete families of polarized HK manifolds of K3-type by \cite{BLMNPS:families} (see Corollary~\ref{cor:HKfrommoduli_cubic4}). In the paper in preparation \cite{BPPZ} we describe every general polarized HK manifold of Kummer-type as the Albanese fiber of a moduli space of stable objects in a noncommutative abelian surface (see Corollary~\ref{cor:kummerHK} for the precise formulation).

Another interesting source of examples of noncommutative varieties with stability conditions arises from Kuznetsov components of Fano threefolds of Picard rank one and index $1$ and $2$. We discuss these cases in \S~\ref{sec:nccurves} following \cite{LPZ:higher}, where we prove a non-emptiness result for moduli spaces of semistable objects. Further, for cubic threefolds we show that moduli spaces of semistable objects are irreducible and we study the birational geometry of the general fiber of their Abel--Jacobi maps (see Theorem~\ref{thm:cubic3folds})

As the reader may note, all the known examples of noncommutative smooth and proper varieties come from semiorthogonal components of bounded derived categories on smooth projective varieties (see Example~\ref{ex:ncas} for a potential counterexample). Let $X$ be a smooth projective variety and assume there is a semiorthogonal component  $\cC \subset \Db(X)$ with a stability condition $\sigma$. We formulate the following problem to connect the study of $\cC$ with the geometry of $X$ through moduli spaces of stable objects.
\begin{enumerate}
\item[(4)] For some $v \in \Lambda$, relate $\rM_\sigma(v)$ with moduli spaces of stable vector bundles on $X$ or Hilbert schemes of curves of fixed degree and genus.
\end{enumerate}
The above proposal is motivated by positive results for cubic fourfolds and threefolds. In fact, if $Y$ is a cubic fourfold, then the Fano variety of lines in $Y$ and the HK eightfold constructed by Lehn, Lehn, Sorger and van Straten out of twisted cubic curves in $Y$ (if $Y$ does not contain a plane) are moduli spaces of stable objects in $\Ku(Y)$ by \cite{LPZ:twistedcubics}. Furthermore, in \cite{LPZ:ellipticquintics} we construct a compactification of the twisted intermediate Jacobian fibration of $Y$ as a birational model of a singular moduli space of semistable objects in $\Ku(Y)$. We review these results in \S~\ref{sec:modularHK} together with speculations on Hilbert schemes of high-degree rational curves on $Y$ and results on the derived category of moduli spaces $\rM_\sigma(v)$ of stable objects in $\Ku(Y)$ by \cite{MoritzSaket}. 

\subsection*{Plan of the paper}
We assume that the reader is familiar with the theory of triangulated categories.

We work in the framework of noncommutative algebraic geometry based on the foundational works of Lurie and Perry \cite{HA, NCHPD}. This in particular allows us to consider families of smooth and proper categories over a base scheme as we explain in \S\ref{sec:stableinfcat}-\ref{sec:ncschemes}. In \S\ref{sec:stabcondncs} we introduce the theory of stability conditions and moduli spaces of semistable objects on noncommutative smooth and proper schemes following the groundbreaking results in \cite{BLMNPS:families} with contributions from \cite{BPPZ}. The core is \S\ref{sec:resandopen} which contains recent results and applications of the theory of stability conditions on noncommutative smooth and proper schemes, as well as open problems. 

This survey completely omits the study of spaces of stability conditions, as well as the developing theory and applications of atomic sheaves in hyperk\"ahler geometry.

\subsection*{Acknowledgments}
A special thank is for the organizers Izzet Coskun, Emanuele Macrì, Alex Perry, Kevin Tucker and Isabel Vogt of the Bootcamp of the 2025 Algebraic Geometry Summer Research Institute at Colorado State University from where this survey originated, and for the author's group participants Moritz Hartlieb, Amal Mattoo, Lily McBeath, Ana Pavlakovic, Weite Pi, Saket Shah, Yu Shen, Nicolas Vilches.

It is a pleasure to thank Arend Bayer, Chunyi Li, Emanuele Macrì, Alex Perry, Paolo Stellari and Xiaolei Zhao for carefully reading a preliminary version of this paper and for many insightful comments.

\section{Hints on stable $\infty$-categories} \label{sec:stableinfcat}

In this section, we quickly introduce the notion of stable infinity category and the examples arising from homological algebra, following Lurie's work \cite{HA}.

\subsection{Simplicial sets}
The \emph{simplex category} is the category $\bf{\Delta}$ whose objects are linearly ordered sets $[n]=\lbrace 0<1< \dots < n-1 <n \rbrace$ with $n$ a nonnegative integer, and morphisms are order preserving functions of the form $f \colon [n] \to [m]$. 

There are some natural morphisms in $\bf{\Delta}$, defined as follows. For every $n > 0$, $0 \leq i \leq n$, consider the function $d_n^i \colon [n-1] \to [n]$ such that
\begin{equation}
d_n^i(j)=
\begin{cases}
j  & \text{ if } j <i \\
j+1  & \text{ if } j \geq i.
\end{cases}
\end{equation}
Note that $d_n^i$ is an injection sending $[n-1]$ into $[n]$ by skipping the element $i$ in $[n]$. 
For $n \geq 0$, $0 \leq i \leq n$, let $s_n^i \colon [n+1] \to [n]$ be the function satisfying
\begin{equation}
s_n^i(j)=
\begin{cases}
j  & \text{ if } j \leq i \\
j-1  & \text{ if } j > i.
\end{cases}
\end{equation}
Then $s_n^i$ is surjective and reaches the element $i$ in $[n]$ twice. 

\begin{Rem} \label{Rem:propbound&face}
\begin{enumerate}
\item It is possible to visualize the above morphisms by identifying each $[n]$ to the standard topological $n$-simplex 
$$|\Delta^n|=\lbrace (t_0, \dots, t_n) \in [0,1]^{n+1} : \sum_{i=0}^n t_i=1 \rbrace.$$
Then one can see that $d^i_n$ embeds $|\Delta^{n-1}|$ in $|\Delta^n|$ as its $i$-th face, while $s^i_n$ collapses $|\Delta^n|$ into its $(i+1)$-face.
\item \label{Rem:propbound&face_generators} The morphisms $d_n^i$ and $s_n^i$ form a set of generators for the morphisms in $\bf{\Delta}$. Indeed, every $f \colon [n] \to [m]$ decomposes uniquely as $f=d \circ s$, where $s \colon [n] \to [p]$ is the surjective composition $s^{j_t}_{n-t} \circ \dots \circ s^{j_1}_{n-1}$ with $0 \leq j_t < \dots < j_1 <n$ (corresponding to the elements $j$ of $[n]$ such that $f(j)=f(j+1)$), and $d \colon [p] \to [m]$ is the injective composition $d^{i_s}_{m} \circ \dots \circ d^{i_1}_{m-s+1}$ with $0 \leq i_1 < \dots < i_s \leq m$ (corresponding to the elements in $[m]$ which are not in the image of $f$), with $m-s=n-t=p$, see \cite[\S 1.1]{JoyalT:introSHT}. 
\item The following identities can be verified directly:
\begin{align} \label{eq:cosimplicial-identitities}
\begin{split}
d^j_{n} \circ d^i_{n-1} &= d^i_{n} \circ d^{j-1}_{n-1} \quad \text{if } j<i   \\
s^j_{n-1} \circ s^i_n &= s^i_{n-1} \circ s^{j+1}_n \quad \text{if } j<i   \\
s^j_n \circ d^i_{n+1} &= \begin{cases}
d^i_{n} \circ s^{j-1}_{n-1} \, &\text{if } i<j \\
\id \, &\text{if } i=j \text{ or } i=j+1 \\
d^{i-1}_{n} \circ s^j_{n-1} \, &\text{if } i>j+1.
\end{cases}
\end{split}
\end{align}
\end{enumerate}    
\end{Rem}

\begin{Def}
A \emph{simplicial set} is a functor $S \colon \bf{\Delta}^\op \to \Set$.  
\end{Def}

\begin{Rem} \label{Rem:face&degoperators}
Given a simplicial set $S$, we denote by $S_n$ the set $S([n])$. An element of $S_n$ is called an $n$\emph{-simplex}. 

For every $0 \leq i \leq n$, the \emph{face operator of degree} $i$ is $S(d^i_n) \colon S_{n} \to S_{n-1}$, while $S(s^i_n) \colon S_n \to S_{n+1}$ is the \emph{degeneracy operator of degree} $i$. When $S$ is clear from the context, we will simply denote them by $d_i^n$ and $s_i^n$. The identities \eqref{eq:cosimplicial-identitities} yield dual identities for $d_i^n$ and $s_i^n$.
\end{Rem}

\begin{Ex}
Let $n$ be a nonnegative integer. The functor $\Delta^n \colon \bf{\Delta}^\op \to \Set$ defined by 
$$\Delta^n(-)=\Hom_{\bf{\Delta}}(-, [n])$$
is a simplicial set, known as the \emph{standard} $n$-\emph{simplex}. The face and degeneracy operators are defined by pre-composition with $d^i_n$ and $s^i_n$, respectively:
\begin{align*}
d^m_i= \Delta^n(d^i_m) \colon \Hom_{\bf{\Delta}}([m], [n]) \to \Hom_{\bf{\Delta}}([m-1], [n]) ,& \quad f \colon [m] \to [n]  \mapsto f \circ d^i_m \\
s^m_i= \Delta^n(s^i_m) \colon \Hom_{\bf{\Delta}}([m], [n]) \to \Hom_{\bf{\Delta}}([m+1], [n]) ,& \quad f \colon [m] \to [n]  \mapsto f \circ s^i_m
\end{align*}
\end{Ex}

\begin{Rem} \label{Rem:simplex&Yoneda}
By definition $\Delta^n$ is the simplicial set represented by $[n]$. Further, by Yoneda lemma there is a bijection between natural transformations of the form $\Hom_{\bf{\Delta}}(-, [n]) \to S$, with $S$ a simplicial set, and $S([n])=S_n$. As a consequence, a $n$-simplex $\sigma \in S_n$ determines uniquely a morphism of simplicial sets $\sigma \colon \Delta^n \to S$. 
\end{Rem}

\begin{Def}[\cite{Riehl}, Definitions 5.1, 5.2]
\begin{enumerate}
    \item Let $S$ be a simplicial set. A \emph{simplicial subset} of $S$ is a simplicial set $T$ such that $T_n \subset S_n$ for every integer $n \geq 0$, and for every $f \colon [m] \to [n]$ we have that 
    $$T(f)=S(f)|_{T_n} \colon T_n \to T_{m}.$$
    \item The $i$\emph{-th face} $\partial_i\Delta^n$ is the simplicial subset of $\Delta^n$ generated by $d^i_n \in (\Delta^n)_{n-1}=\Hom_{\bf{\Delta}}([n-1], [n])$, i.e.\ a $k$-simplex $\sigma \in (\Delta^n)_k$ belongs to $(\partial_i\Delta^n)_k$ if $\sigma= d^i_n \circ \alpha$ for some $\alpha \in \Hom_{\bf{\Delta}}([k], [n-1])$.
    \item The \emph{boundary} $\partial \Delta^n$ of the standard $n$-simplex $\Delta^n$ is the simplicial subset of $\Delta^n$ generated by $d^0_n, \dots, d_n^n$.
\end{enumerate}    
\end{Def}

\begin{Rem}
Note that a $k$-simplex $\sigma \in (\Delta^n)_{k}= \Hom_{\bf{\Delta}}([k], [n])$ belongs to  $(\partial \Delta^n)_k$ if and only if it is not surjective by Remark~\ref{Rem:propbound&face}\eqref{Rem:propbound&face_generators}.
\end{Rem}

\begin{Ex} \label{Ex:singsimplset}
The previous definitions generalize some basic notions in algebraic topology. 

Given a topological space $X$, we define the functor $\Sing(X) \colon \bf{\Delta}^\op \to \Set$ such that $\Sing(X)_n$ is the set of singular $n$-simplicies in $X$, i.e.\ continuous maps $\sigma \colon |\Delta^n| \to X$, and to every non-decreasing function $f \colon [k] \to [n]$, it associates $\Sing(X)(f) \colon \Sing(X)_n \to \Sing(X)_k$  given by pre-composition with 
$$ |\Delta^k| \to  |\Delta^n|, \quad (t_0, \dots, t_k) \mapsto (\sum_{f(i)=0}t_i, \dots, \sum_{f(i)=n}t_i).$$
This data defines a simplicial set, called the \emph{singular simplicial set} of $X$, together with the face and degeneracy operators obtained by pre-composition. If $\sigma \colon |\Delta^n| \to X$ is a singular simplex, then $d^n_i(\sigma)$ is obtained by pre-composing $\sigma$ with
$$ |\Delta^{n-1}| \to  |\Delta^n|, \quad (t_0, \dots, t_{n-1}) \mapsto (t_0, \dots, t_{i-1}, 0, t_i, \dots, t_{n-1}),$$
and is identified as the restriction of $\sigma$ to its $i$-th face.
\end{Ex}

\begin{Rem} \label{Rem:geomreal}
Given a simplicial set $S$, it is possible to associate a topological space $|S|$, called its \emph{geometric realization}. We refer for instance to \cite[\S I.2]{GoerssJardine} for more details on this construction. 

We only remark that the geometric realizations of the simplicial sets $\Delta^n$, $\partial_i\Delta^n$ and $\partial \Delta^n$ are the topological spaces one would expect, namely the standard topological simplex $|\Delta^n|$, its $i$-th face $|\partial_i\Delta^n|$ and its boundary $|\partial \Delta^n|$, respectively. Furthermore, $\Sing \colon \Top \to \Set_\Delta$ and $|\,\,| \colon \Set_\Delta \to \Top$ form a pair of adjoint functors between the categories of topological spaces and simplicial sets:
\begin{equation} \label{eq:adjSing-georel}
\Hom_{\Top}(|S|, X) \cong \Hom_{\Set_\Delta}(S, \Sing(X)).    
\end{equation}
\end{Rem}

\subsection{Horns and $\infty$-categories}
The following definition is essential to introduce the notion of $\infty$-categories.

\begin{Def} Let $n$ be a nonnegative integer, and fix $0 \leq k \leq n$.
The $k$-th \emph{simplicial horn} $\Lambda^n_k$ is the simplicial subset of $\Delta^n$ generated by $d^0_n, \dots, d^{k-1}_n, d^{k+1}_n, \dots, d^n_n$.   
\end{Def}

\begin{Rem} \label{Rem:georel_horn}
Similarly to Remark~\ref{Rem:geomreal}, the geometric realization of $\Lambda^n_k$ is the union of the faces of $|\Delta^n|$ except the interior of the $k$-th face.   
\end{Rem}

The next result states that the singular simplicial set of a topological space introduced in Example~\ref{Ex:singsimplset} satisfies a horn extension condition.

\begin{Prop} \label{Prop:singTopisKan}
If $X$ is a topological space, then for every $n>0$, $0 \leq k \leq n$, every morphism of simplicial sets $f \colon \Lambda^n_k \to \Sing(X)$ extends to $\Delta^n \to \Sing(X)$ through the inclusion $\Lambda^n_k \hookrightarrow \Delta^n$:
\begin{equation*}
\xymatrix{
\Lambda^n_k \ar[r]^-f \ar@{^{(}->}[d] & \Sing(X) & \text{ for every } 0\leq k\leq n.\\
\Delta^n \ar@{-->}[ru]& &}
\end{equation*}
\end{Prop}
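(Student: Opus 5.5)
The plan is to turn the horn-filling problem into a topological extension problem and solve that with an explicit retraction. By the adjunction \eqref{eq:adjSing-georel} of Remark~\ref{Rem:geomreal}, a morphism $f \colon \Lambda^n_k \to \Sing(X)$ is the same as a continuous map $\tilde f \colon |\Lambda^n_k| \to X$. Likewise, an extension $\Delta^n \to \Sing(X)$ is the same as a continuous map $|\Delta^n| \to X$. Naturality of the adjunction bijection in the simplicial variable ensures that restricting along $\Lambda^n_k \hookrightarrow \Delta^n$ corresponds to restricting along $|\Lambda^n_k| \hookrightarrow |\Delta^n|$. So it suffices to show that every continuous $\tilde f \colon |\Lambda^n_k| \to X$ extends continuously to $|\Delta^n|$.

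By Remark~\ref{Rem:georel_horn}, $|\Lambda^n_k|$ is the union of all faces of $|\Delta^n|$ except the $k$-th face $\{t_k=0\}$. In other words, it consists of the points of $|\Delta^n|$ with some coordinate $t_i=0$ for $i\neq k$. The key step is to construct a continuous retraction $r \colon |\Delta^n| \to |\Lambda^n_k|$; the extension is then $\tilde f \circ r$.

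I would build $r$ by radial projection from a point lying just beyond the missing face. Take the barycenter $b$ of $|\Delta^n|$ and set $p = b + \varepsilon(b - e_k)$ for small $\varepsilon>0$. This point lies in the affine hyperplane $\sum t_i=1$, outside the simplex on the side of the open face opposite $e_k$. For $x\in|\Delta^n|$, follow the ray from $p$ through $x$ and let $r(x)$ be its last point in $|\Delta^n|$, i.e.\ where it exits the simplex.

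Two things must be checked.
\begin{itemize}
\item The exit point lies in $|\Lambda^n_k|$. The ray points away from $p$, hence away from the $k$-th face. So its exit happens where some $t_i$ with $i\neq k$ hits zero, not where $t_k$ does. Computing $t_k$ along the ray, it is increasing, since $p_k < b_k$.
\item $r$ is continuous and restricts to the identity on $|\Lambda^n_k|$. The exit parameter is $\lambda(x)=\min_{i\neq k,\, p_i>x_i}\, x_i/(p_i - x_i)$ in suitable coordinates. This is a minimum of finitely many continuous functions, and the set of relevant $i$ is always nonempty because $\sum_i(x_i-p_i)=0$ and $x_k - p_k>0$. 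Some care is needed that this minimum is continuous where an index enters or leaves the set; this holds because the terms blow up to $+\infty$ there.
\end{itemize}

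I expect this continuity and boundary verification to be the main, if routine, obstacle. For $n=1$ the horn is a vertex and $r$ is simply the constant map to it, which is consistent with the construction. An alternative I would keep in reserve is to cite that $(|\Delta^n|,|\Lambda^n_k|)\cong (D^{n-1}\times[0,1], D^{n-1}\times\{0\}\cup \partial D^{n-1}\times[0,1])$, whose retraction is standard.
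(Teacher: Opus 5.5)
Your strategy is the same as the paper's. You use the adjunction \eqref{eq:adjSing-georel} to reduce to extending a continuous map $|\Lambda^n_k|\to X$ over $|\Delta^n|$, then precompose with a retraction $|\Delta^n|\to|\Lambda^n_k|$. The paper just cites that the horn is a deformation retract, \cite[Lemma 5.6]{Riehl}; you correctly note that a retraction suffices and try to write one down.

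\textbf{The problem.} Your explicit retraction does not work as stated. With $b$ the barycenter of the whole simplex, $p=b+\varepsilon(b-e_k)$ has $p_k=\frac{1-n\varepsilon}{n+1}$ and $p_i=\frac{1+\varepsilon}{n+1}$ for $i\neq k$. So for small $\varepsilon$ the point $p$ lies in the \emph{interior} of $|\Delta^n|$, not beyond the face $\{t_k=0\}$. Projection from an interior point has three defects:
\begin{itemize}
\item it is undefined at $x=p$ and cannot be continuous near $p$;
\item points $x$ between $p$ and the barycenter of the missing face are sent to that barycenter, which is not in $|\Lambda^n_k|$;
\item already for $n=1$ it sends the two halves of the segment to different endpoints, contradicting your sanity check.
\end{itemize}
Both of your verifications actually use $x_k-p_k>0$ for every $x\in|\Delta^n|$, i.e.\ $p_k<0$. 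The inequality $p_k<b_k$ that you invoke is not enough.

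\textbf{The repair.} Take $\varepsilon>1/n$, or replace $b$ by the barycenter $\frac1n\sum_{i\neq k}e_i$ of the $k$-th face. Either way $p_k<0<p_i$ for all $i\neq k$, and then:
\begin{itemize}
\item the set $\{i\neq k : x_i<p_i\}$ is nonempty for every $x$;
\item each $x_i/(p_i-x_i)$, read as $+\infty$ when $x_i\geq p_i$, is continuous with values in $[0,+\infty]$ because $p_i>0$;
\item the minimum $\lambda(x)$ is finite, continuous, and vanishes exactly on $|\Lambda^n_k|$;
\item $r(x)=x+\lambda(x)(x-p)$ has nonnegative coordinates summing to $1$, with some $t_i=0$ for $i\neq k$.
\end{itemize}
With this correction your argument is complete and coincides with the paper's.
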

\begin{proof}
Through the adjuction \eqref{eq:adjSing-georel}, we have that $f$ corresponds to $\sigma \colon |\Delta^n| \to X$ in $\Top$, and it is enough to show that $\sigma$ factors through $|\Lambda^n_k| \hookrightarrow |\Delta^n|$. Using Remark~\ref{Rem:georel_horn} one can show that $|\Lambda^n_k|$ is a deformation retract of $|\Delta^n|$. Then the composition $\sigma \circ r \colon |\Delta^n| \to X$, where $r$ is a retraction, gives the required filling. See \cite[Lemma 5.6]{Riehl}. 
\end{proof}

More generally, a simplicial set is a \emph{Kan complex} if it satisfies the above horn extension condition. Another interesting example of Kan complex is the underlying simplicial set of a simplicial group, see \cite[\S I.3]{GoerssJardine} for more details and the relevance of Kan complexes for the study of the homotopy theory of simplicial sets.

In general, a simplicial set needs not to be a Kan complex. For instance, for $n>0$ the standard $n$-simplex $\Delta^n$ does not satisfies the horn extension condition (exercise). The notion of $\infty$-category requires a weaker version of the horn extension condition in Proposition~\ref{Prop:singTopisKan}.

\begin{Def}
An $\infty$-\emph{category} $\cC$ is a simplicial set $\cC \colon \bf{\Delta}^\op \to \Set$ such that for every $n>0$, $0 < k < n$, every morphism of simplicial sets $f \colon \Lambda^n_k \to \cC$ extends to $\Delta^n \to \cC$ through the inclusion $\Lambda^n_k \hookrightarrow \Delta^n$:
\begin{equation*}
\xymatrix{
\Lambda^n_k \ar[r]^-f \ar@{^{(}->}[d] & \cC &  \text{ for every } 0< k<n.\\
\Delta^n \ar@{-->}[ru]& & }
\end{equation*}  
\end{Def}

Clearly, the singular simplicial set of a topological space and more generally Kan complexes are $\infty$-categories. An interesting feature is that it is possible to associate an $\infty$-category to any category, as explained below.

\begin{Ex}
Let $\cC$ be a small category. Let $N(\cC)$ be the simplicial set such that $N(\cC)_n$ is the set of all composable sequences of morphisms  
$$C_0 \to C_1 \to \dots \to C_n$$
with lenght $n$. By definition, $N(\cC)_0$ is the set of objects in $\cC$, while $N(\cC)_1$ is the set of morphisms in $\cC$. In this case, the degeneracy operator $s^n_i$ adds the identity morphism at $C_i$ in a composable sequence of lenght $n$ as above, getting one of lenght $n+1$, while the face operator $d^n_i$ composes the $i$-th and $i+1$-th arrows if $0<i<n$ and leaves out the first or last arrow for $i=0$ or $n$, respectively.

More formally, regarding $[n]$ as a category, define the nerve functor $N \colon \Cat \to \Set_\Delta$ such that 
$$N(\cC)_n=\Hom_{\Cat}([n], \cC)$$
for a category $\cC$ (see \cite[Examples 3.2, 3.6]{Riehl}).

The nerve of a small category is an $\infty$-category  satisfying the additional property that for every $0<k<n$, every $f \colon \Lambda^n_k \to \cC$ extends \emph{uniquely} to $\Delta^n \to \cC$.
Conversely, every $\infty$-category with this property is isomorphic to the nerve of a small category. We refer to \cite[Proposition 1.1.2.2]{HTT} for the proof of this statement.

In general, the extension property does not hold for horns $\Lambda^n_0$ and $\Lambda^n_n$. For example (see \cite[Remark 1.1.2.3]{HTT}), if $n=2$, let $\Lambda^2_2 \to N(\cC)$ be a morphism of simplicial sets, where $\cC$ is a category. This corresponds to the diagram
\begin{equation*}
\xymatrix{
&C_1 \ar[dr]^f & \\
C_0 \ar[rr]^g & & C_2
}
\end{equation*}
where $C_0, C_1, C_2$ are objects in $\cC$ and $f, g \in N(\cC)_1$ are morphisms in $\cC$. Then satisfying the extension condition would require a morphism $C_0 \to C_1$ communting the diagram, which is not always true (e.g.\ if $C_0=C_2$, $g=\id$ and $f$ is not invertible).
\end{Ex}

\subsection{Homotopy category and stability}

Let $\cC$ be an $\infty$-category. It is possible to use some terminology borrowed from category theory.
\begin{itemize}
\item We say that elements in $\cC_0$ are \emph{objects} of $\cC$. Given two objects $X$, $Y \in \cC_0$, a \emph{morphism} from $X$ to $Y$ is a $1$-simplex $f \in \cC_1$ such that $d^1_1(f)=X$ and $d^1_0(f)=Y$. The higher categorical structure is given by simplicies in $\cC_2$, $\cC_3, \dots, \cC_n, \dots$
\item If $X \in \cC_0$, the identity of $X$ is defined as the $1$-simplex $\id_X=s^0_0(X)$. Indeed, $s^0_0 \colon \cC_0 \to \cC_1$ satisfies $d^1_1\circ s^0_0= d^1_0 \circ s^0_0= \id_{\cC_0}$ as noted in Remark~\ref{Rem:face&degoperators}.
\item Given two morphisms $f \colon X \to Y$ and $g \colon Y \to Z$ in $\cC$, there is a morphism $X \to Z$ which is \emph{a composition} of $f$ and $g$. Indeed, the data of $f$ and $g$ as above defines a morphism of simplicial sets $\gamma=(g, \cdot, f) \colon \Lambda^2_1 \to \cC$ such that $\gamma \circ d^0_2=g$ and $\gamma \circ d^2_2=f$ (see \cite[Tag 050F]{Kerodon}). By definition of $\infty$-category, $\gamma$ extends to $\sigma \colon \Delta^2 \to \cC$. By Yoneda lemma (see Remark~\ref{Rem:simplex&Yoneda}), $\sigma$ defines a $2$-simplex $\sigma \in \cC_2$. Then 
\begin{equation} \label{eq:composition}
h:=d^2_1(\sigma) \in \cC_1    
\end{equation}
satisfies $d^1_1(h)=X$, $d^1_0(h)=Z$ by the identities in Remark~\ref{Rem:face&degoperators}. We can represent $\sigma$ as the diagram 
\begin{equation*}
\xymatrix{
&Y \ar[dr]^g & \\
X \ar[ur]^f \ar@{-->}[rr]^h & & Z
}
\end{equation*}
where $f=d^2_2(\sigma)=f$, $g=d^2_0(\sigma)$, $h=d^2_1(\sigma)$. Since the extension $\sigma$ is not unique, $f$ and $g$ do not uniquely determine $h$ (in fact the composition of $f$ and $g$ is defined only up to homotopy, see Definition~\ref{def:homotopic}).
\item Given two objects $X$, $Y \in \cC_0$, the \emph{mapping space} $\Map_\cC(X,Y)$ is the Cartesian square
\begin{equation*}
\xymatrix{
\Map_\cC(X,Y) \ar[r] \ar[d] & \cC_1 \ar[d]^{(d^1_1, d^1_0)} \\
\bullet= \Delta^0 \ar[r]^{X \times Y} & \cC_0 \times \cC_0
}    
\end{equation*}
\end{itemize}

\begin{Def} \label{def:homotopic}
Let $X, Y \in\cC_0$. Two morphisms $f, g \in \Map_\cC(X,Y)$ are \emph{homotopic} if there exists $\sigma \in \cC_2$ such that $d^2_2(\sigma)=f$, $d^2_1(\sigma)=g$, $d^2_0(\sigma)=\id_Y$, as represented in the diagram
\begin{equation*}
\xymatrix{
&Y \ar[dr]^{\id_Y} & \\
X \ar[ur]^f \ar[rr]^g & & Y
}
\end{equation*}
\end{Def}
In this case, we write $f \sim_{\hom}  g$.

\begin{Lem}[\cite{Kerodon}, Tag 003Z]
Let $X, Y \in\cC_0$. Then $\sim_{\hom}$ is an equivalence relation on $\Map_\cC(X,Y)$.    
\end{Lem}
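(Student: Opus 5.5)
Reflexivity, symmetry and transitivity will each come from filling an inner horn $\Lambda^3_k\to\cC$ with $k\in\{1,2\}$ and then reading off the missing face of the resulting $3$-simplex. Throughout I use the conventions of the paper: a $2$-simplex $\sigma$ has edges $d^2_2\sigma\colon X\to Y$, $d^2_0\sigma\colon Y\to Z$, $d^2_1\sigma\colon X\to Z$. For a $3$-simplex $\tau$ on vertices $0,1,2,3$, the face $d^3_i\tau$ is the $2$-simplex opposite vertex $i$. To specify a map $\Lambda^3_k\to\cC$ it suffices to give three $2$-simplices (the faces $i\neq k$) that satisfy the simplicial compatibilities $d_i d_j = d_{j-1} d_i$ for $i<j$ on their shared edges.

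\emph{Reflexivity.} For $f\colon X\to Y$, the degenerate simplex $s^1_1(f)\in\cC_2$ witnesses $f\sim_{\hom} f$. By the simplicial identities dual to \eqref{eq:cosimplicial-identitities}, its faces are $d_2 s_1 f=f$, $d_1 s_1 f=f$ and $d_0 s_1 f=s_0 d_0 f=\id_Y$.

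\emph{Symmetry.} Suppose $\sigma$ witnesses $f\sim_{\hom} g$. Consider the horn $\Lambda^3_2\to\cC$ on vertices $X,Y,Y,Y$ given by the following faces:
\begin{itemize}
\item $d_3=\sigma$, with edges $f,\ \id_Y,\ g$;
\item $d_1=s_1 f$, with edges $f,\ \id_Y,\ f$ on vertices $0,2,3$;
\item $d_0=s_0s_0 Y$, the totally degenerate $2$-simplex on $Y$.
\end{itemize}
Once the edges are checked to agree, the inner horn fills to a $3$-simplex $\tau$. The missing face $d_2\tau$ lies on vertices $0,1,3$ and has edges $g$ from $0$ to $1$, $\id_Y$ from $1$ to $3$, and $f$ from $0$ to $3$. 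Hence $d_2\tau$ witnesses $g\sim_{\hom} f$.

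\emph{Transitivity.} Suppose $\sigma$ witnesses $f\sim g$ and $\sigma'$ witnesses $g\sim h$. Use the horn $\Lambda^3_1$ on vertices $X,Y,Y,Y$ with the following faces:
\begin{itemize}
\item $d_3=\sigma$ on vertices $0,1,2$, with edges $f,\ \id,\ g$;
\item $d_2=\sigma'$ on vertices $0,1,3$, with edges $g,\ \id,\ h$;
\item $d_0=s_0s_0Y$.
\end{itemize}
The edges are consistent: the edge $01$ is $f$ in $d_3$ but would have to be $g$ in $d_2$, so this labelling does not work as stated and must be rearranged. The fix is to place $\sigma'$ on vertices $0,2,3$ (as $d_1$), which gives a $\Lambda^3_2$ horn whose missing face $d_2$ lies on vertices $0,1,3$ and has edges $f,\ \id,\ h$. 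With that arrangement:
\begin{itemize}
\item $d_3=\sigma$ has edges $01=f$, $12=\id$, $02=g$;
\item $d_1=\sigma'$ has edges $02=g$, $23=\id$, $03=h$;
\item $d_0=s_0s_0Y$ has edges $12=23=13=\id$.
\end{itemize}
Each shared edge now agrees, so the filler $\tau$ exists. Its face $d_2\tau$ has edges $f$, $\id_Y$ and $h$, so it witnesses $f\sim_{\hom}h$.

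The main obstacle is the bookkeeping: checking that the prescribed faces really define a map $\Lambda^3_k\to\cC$. By the construction of the horn as a union of faces (a coequalizer), this reduces to the simplicial identities on shared edges, which I would verify using Remark~\ref{Rem:face&degoperators}. The same $\Lambda^3_2$ pattern already yields symmetry once transitivity is in hand: take $\sigma'$ to be the given witness of $f\sim g$ and $\sigma=s_1 f$. This suggests organizing the proof as reflexivity, then the single "Euclidean" lemma ($f\sim g$ and $f\sim h$ imply $g\sim h$, after adjusting which face is which), and deducing both symmetry and transitivity from it.
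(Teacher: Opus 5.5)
Your reflexivity step ($s_1f$) is correct, and so is your final, rearranged transitivity argument. With $\sigma$ on $012$, $\sigma'$ on $023$ and $s_0s_0Y$ on $123$, the shared edges $02$, $12$, $23$ agree ($g$, $\id_Y$, $\id_Y$). A filler then has $d_2d_2\tau=d_2\sigma=f$, $d_1d_2\tau=d_1d_1\tau=h$ and $d_0d_2\tau=d_1d_0\tau=\id_Y$.

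The gap is symmetry. In your $\Lambda^3_2$ horn, the faces $d_3=\sigma$ and $d_1=s_1f$ share the edge $0\to2$. That edge is $d_1\sigma=g$ in the first face and $d_2s_1f=f$ in the second, so the horn does not exist unless $f=g$. Moreover, the edge $0\to1$ of the missing face is inherited from $\sigma$, so it is $d_2\sigma=f$, not $g$ as you claim. Your fallback at the end fails too. The $\Lambda^3_2$ pattern takes a witness of $a\sim b$ on $012$ and one of $b\sim c$ on $023$ and returns $a\sim c$. It can only concatenate homotopies, never reverse one. With $\sigma=s_1f$ and $\sigma'$ a witness of $f\sim g$, it just returns $f\sim g$ again.

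The missing idea is the other inner horn, $\Lambda^3_1$, in which the two given $2$-simplices share the edge $0\to1$ instead of $0\to2$. This is how the Kerodon proof cited by the paper goes. Take $\sigma$ witnessing $f\sim g$ and $\sigma'$ witnessing $f\sim h$, and use the faces $d_3=\sigma$, $d_2=\sigma'$, $d_0=s_0s_0Y$. These are compatible; the only nontrivial check is $d_2\sigma=f=d_2\sigma'$. A filler $\tau$ satisfies $d_2d_1\tau=d_1d_3\tau=g$, $d_1d_1\tau=d_1d_2\tau=h$ and $d_0d_1\tau=d_0d_0\tau=\id_Y$, so $d_1\tau$ witnesses $g\sim h$. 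This is the Euclidean property you gestured at, but it comes from $\Lambda^3_1$, not from your $\Lambda^3_2$ pattern. Taking $\sigma'=s_1f$ gives symmetry. Transitivity then follows either from this property together with symmetry, or from your direct $\Lambda^3_2$ argument, which is a valid alternative. Your abandoned first attempt at transitivity was exactly this $\Lambda^3_1$ horn, except that you placed a witness of $g\sim h$ in the face $d_2$ instead of one of $f\sim h$.
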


\begin{Def}
Let $\cC$ be an $\infty$-category. The \emph{homotopy category} of $\cC$, denoted by $\rh(\cC)$, is the category whose objects are those of $\cC$, and morphisms between $X$ and $Y \in\cC_0$ are equivalence classes in $\Map_\cC(X,Y) / \sim_{\hom}$.  
\end{Def}

Note that the identity in $\Hom_{\rh(\cC)}(X, X)$ is the equivalence class of $\id_X$. Further, composition is defined uniquely for any triple of objects $X$, $Y$, $Z$ by
$$\Hom_{\rh(\cC)}(X, Y) \times \Hom_{\rh(\cC)}(Y, Z) \to \Hom_{\rh(\cC)}(X, Z), \quad ([f], [g]) \mapsto [g] \circ [f]:=[h]$$
where $h$ is defined in \eqref{eq:composition} (see \cite[Tags 0043, 0048]{Kerodon} for all the verifications).

From the point of view of algebraic geometry and homological algebra, it is important to understand when the homotopy category of an $\infty$-category has a triangulated structure. This leads to the notion of stable $\infty$-category. 

\begin{Def}
Let $\cC$ be an $\infty$-category. Let $f \colon X \to Y$ be a morphism of $\cC$. A \emph{fiber} of $f$ is a Cartesian square  
\begin{equation*}
\xymatrix{
Z \ar[r]^g \ar[d] & X \ar[d]^f \\
0 \ar[r] & Y.
}
\end{equation*}
A \emph{cofiber} of $f$ is a coCartesian square 
\begin{equation*}
\xymatrix{
X \ar[r]^f \ar[d] & Y \ar[d]^g \\
0 \ar[r] & Z.
}  
\end{equation*}
\end{Def}

\begin{Def}
An $\infty$-category $\cC$ is \emph{stable} if it satisfies the following conditions:
\begin{enumerate}
\item $\cC$ has a zero object $0$, i.e.\ an object which is both initial and final.
\item Every morphism in $\cC$ admits a fiber and cofiber. 
\item A square
\begin{equation*}
\xymatrix{
X \ar[r]^f \ar[d] & Y \ar[d]^g \\
0 \ar[r] & Z
}    
\end{equation*}
is Cartesian (\emph{fiber sequence}) if and only if it is coCartesian (\emph{cofiber sequence}).
\end{enumerate}
\end{Def}

The key point is the following beautiful result of Lurie.
\begin{Thm}[\cite{HA}, Theorem 1.1.2.14] \label{thm:hCistriangulated}
Let $\cC$ be a stable $\infty$-category. Then its homotopy category $\rh(\cC)$ has the structure of triangulated category.   
\end{Thm}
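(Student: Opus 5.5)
We follow Lurie's strategy: build the triangulated structure on $\rh(\cC)$ in three stages, namely additivity, the shift functor, and the distinguished triangles, and then verify Verdier's axioms.

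\textbf{Additivity.} Since $\cC$ has a zero object, every mapping space $\Map_\cC(X,Y)$ is pointed by the zero map $X\to 0\to Y$. Stability gives the suspension and loop functors $\Sigma X = \mathrm{cofib}(X\to 0)$ and $\Omega X=\mathrm{fib}(0\to X)$. Because fiber and cofiber squares coincide, the unit $X\to \Omega\Sigma X$ is an equivalence, since the square exhibiting $\Sigma X$ is also Cartesian. Hence $\Sigma$ is an equivalence of $\cC$. Writing $X\simeq \Omega^2\Sigma^2 X$, each $\Hom_{\rh(\cC)}(X,Y)\cong \pi_0\Map(\Sigma^2\Omega^2X,Y)=\pi_2$ of a pointed space carries a natural abelian group structure. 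It remains to check that this structure is compatible with composition, and that finite products and coproducts exist and agree. For the latter, the pushout of $X\leftarrow 0\to Y$ is also a pullback, so $X\sqcup Y\simeq X\times Y$. Together these make $\rh(\cC)$ additive.

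\textbf{Shift and triangles.} We take the translation functor to be $[1]=\Sigma$, induced on $\rh(\cC)$. It is an autoequivalence by the previous step. A diagram $X\xrightarrow{f}Y\xrightarrow{g}Z\xrightarrow{h}X[1]$ is declared distinguished if it arises from a diagram in $\cC$ consisting of two adjacent coCartesian squares:
\begin{equation*}
\xymatrix{
X \ar[r]^f \ar[d] & Y \ar[r] \ar[d]^g & 0 \ar[d] \\
0 \ar[r] & Z \ar[r]^{h'} & W
}
\end{equation*}
The outer rectangle is then coCartesian, which gives $W\simeq \Sigma X$, and we set $h$ to be the composite of $h'$ with this identification. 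The existence of such diagrams, together with their uniqueness up to a contractible space of choices, follows from the general fact that the $\infty$-category of such diagrams maps by a trivial fibration to $\Fun(\Delta^1,\cC)$; this is a Kan extension argument of the kind in \cite{HTT}. This gives TR1, including the triangle on $\id_X$, whose cofiber is $0$. TR3 follows because a homotopy commutative square lifts to a map of arrows in $\Fun(\Delta^1,\cC)$, and the cofiber construction is functorial. For TR2 (rotation) we extend the diagram one step further to the right, obtaining the cofiber of $g$ as $\Sigma X$. The one subtle point is the sign: identifying the new third map with $-\Sigma f$ requires comparing two identifications of $\Sigma$ that differ by the flip of the square, and this flip acts by $-1$ on $\pi_1$.

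\textbf{Main obstacle: the octahedral axiom.} Given $X\to Y\to Z$, we build a single large diagram on a grid indexed by pairs $i\le j$ in $[3]$, with zeros on the diagonal, in which every square is coCartesian. Such a grid exists and is essentially unique by the same Kan extension argument as above. Reading off its various rectangles simultaneously produces the four triangles of the octahedron, and they share the commutativity relations required. I expect the bookkeeping here, together with the sign conventions carried over from TR2, to be the hardest part.
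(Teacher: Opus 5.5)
The paper gives no proof of its own here; it just cites \cite[Theorem 1.1.2.14]{HA}. Your outline is Lurie's argument there: $\pi_2$ of mapping spaces for the group structure, $\Sigma$ as translation, triangles from two adjacent pushout squares, the trivial fibration onto $\Fun(\Delta^1,\cC)$ for TR1 and TR3, the transposition sign for TR2, and a large pushout diagram for TR4. Two steps, however, do not work as written.

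\emph{The biproduct step is a non sequitur.} Suppose the pushout square with vertices $0$, $X$, $Y$, $X\sqcup Y$ is also Cartesian. This only gives $0\simeq X\times_{X\sqcup Y}Y$. It says nothing about $X\times Y=X\times_0 Y$, which concerns a different square. Moreover, the definition of stability only identifies Cartesian and coCartesian squares with $0$ in the lower-left corner, so even that first claim needs a separate argument. You do not need any of this. Since $\Hom_{\rh(\cC)}(X,Y)\cong\pi_2\Map(\Omega^2X,Y)$ naturally in both variables, composition is bilinear. Coproducts in $\cC$ are coproducts in $\rh(\cC)$. In a preadditive category, finite coproducts are automatically biproducts.

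\emph{The TR4 grid is too small.} Take the grid indexed by pairs $i\le j$ in $[3]$, with $X(0,1),X(0,2),X(0,3)$ equal to $X,Y,Z$. It contains $Y/X$, $Z/X$, $Z/Y$, but none of $\Sigma X$, $\Sigma Y$, $\Sigma(Y/X)$. So you cannot read off the connecting maps from it. Nor can you read off the key relation that the connecting map $Z/Y\to\Sigma(Y/X)$ equals $\Sigma u\circ d'$, where $u\colon Y\to Y/X$ and $d'\colon Z/Y\to\Sigma Y$.

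To fix this, enlarge the grid, e.g.\ to pairs $i\le j\le i+4$ with $X(i,i)=0=X(i,i+4)$ and all squares pushouts. Then $X(1,4)\simeq\Sigma X$, $X(2,4)\simeq\Sigma Y$ and $X(2,5)\simeq\Sigma(Y/X)$. The grid maps $X(1,4)\to X(2,4)$ and $X(2,4)\to X(2,5)$ are $\Sigma f$ and $\Sigma u$. Every required commutativity then follows from functoriality of the grid. All rectangles defining these identifications have the same orientation, so no sign enters; the sign is confined to TR2.

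Finally, you must pass from arbitrary given triangles to those of the grid. This works because two distinguished triangles on the same morphism are isomorphic via an isomorphism that is the identity on the first two terms, again by the trivial fibration.
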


We end this section with important examples of stable infinity categories arising from homological algebra.

\begin{Ex}
Let $\cA$ be an abelian category. Denote by $\rC(\cA)$ the category of complexes in $\cA$, which is a dg category. Then one can associate a simplicial set $N_{\text{dg}}(\rC(\cA))$, called the \emph{differential graded nerve} of $\rC(\cA)$, which is a stable $\infty$-category (see \cite[Construction 1.3.1.6]{HA} for the definition of dg nerve and \cite[Propositions 1.3.1.10, 1.3.2.10]{HA} for the stability). Explicitly, objects of $N_{\text{dg}}(\rC(\cA))$, equivalently elements in $N_{\text{dg}}(\rC(\cA))_0$, are complexes in $\cA$, while morphisms of $N_{\text{dg}}(\rC(\cA))$ are morphism of complexes with zero differential. A $2$-simplex is represented by a diagram
\begin{equation*}
\xymatrix{
& Y \ar[dr]^g \ar@{=>}[d]^k & \\
X \ar[ru]^f \ar[rr]_h & & Z
}    
\end{equation*}
where $f$, $g$, $h$ are morphisms of $N_{\text{dg}}(\rC(\cA))$, and $k \in \Hom_{\rC(\cA)}(X,Z)^{-1}$ such that $d(k)=g \circ f-h$, i.e.\ $g \circ f$ and $h$ are homologous.

Assume $\cA$ has enough projective objects. Define the \emph{derived $\infty$-category} $\rD^-(\cA)$ of bounded above complexes in $\cA$ by
\begin{equation*}
\rD^-(\cA):=  N_{\text{dg}}(\rC^-(\cA_{\text{proj}})),  
\end{equation*}
where $\cA_{\text{proj}}$ is the subcategory of projective objects in $\cA$. Analogously, assuming $\cA$ has enough injective objects, then the \emph{derived $\infty$-category} of bounded below complexes is
\begin{equation*}
\rD^+(\cA):=  N_{\text{dg}}(\rC^+(\cA_{\text{inj}})),
\end{equation*}
where $\cA_{\text{inj}}$ is the subcategory of injective objects in $\cA$. Both are stable $\infty$-categories, as $\rC^-(\cA_{\text{proj}})$ and $\rC^+(\cA_{\text{inj}})$ are closed subcategories of $\rC(\cA)$ with respect to shifts and mapping cones (see \cite[Corollary 1.3.2.18]{HA}).

The homotopy categories $\rh(\rD^-(\cA))$, $\rh(\rD^+(\cA))$ are triangulated categories by Theorem~\ref{thm:hCistriangulated}, and in fact coincide by construction with the classical derived categories of bounded above and below complexes, respectively.
\end{Ex}

\begin{Ex}
Assume that $\cA$ is a Grothendieck abelian category \cite[Tag 079A]{stacks-project}. Denote by $W$  the collection of morphisms in $\rC(\cA)$ which are
quasi-isomorphisms of chain complexes. Then the \emph{derived $\infty$-category} of $\cA$ is defined as the localization of $N_{\text{dg}}(\rC(\cA))$ at $W$:
\begin{equation*}
\rD(\cA):= N_{\text{dg}}(\rC(\cA))[W^{-1}].    
\end{equation*}
The localization of an $\infty$-category at a collection of morphisms is well-defined \cite[Remark 1.3.4.2]{HA}. Moreover, $\rD(\cA)$ is stable by \cite[Proposition 1.3.5.9]{HA}.

As a consequence, when $X$ is a scheme, since the abelian category of quasi-coherent $\cO_X$-modules is Grothendieck \cite[Tag 077P]{stacks-project}, it is possible to define
\begin{equation*}
\Dqc(X):=\rD(\Qcoh(X))   
\end{equation*}
as a stable $\infty$-category, enhancing the usual unbounded derived category of $\cO_X$-modules with quasi-coherent cohomology. Let $\Dperf(X) \subset \Dqc(X)$ be the full $\infty$-subcategory of perfect complexes, i.e.\ complexes which are locally quasi-isomorphic to bounded complexes of finite rank locally free sheaves, whose homotopy category is the usual derived category of perfect complexes. By abuse of notation, in the next sections we will denote by $\Dperf(X)$ and $\Dqc(X)$ both the $\infty$-categories and their homotopy categories.
\end{Ex}

\section{Noncommutative schemes} \label{sec:ncschemes}

From now on, every scheme is assumed quasi-compact and separated. We could work in the more general setting of perfect algebraic stacks applying the results in \cite{BenZvi}, but we prefer to keep a down to earth exposition. In this section we introduce the notions of linear categories over a base scheme $S$ which define families of categories over $S$, and of smoothness and properness for $S$-linear categories. We end with a list of examples which will appear in the applications of the next sections. The main reference is \cite{NCHPD}.

\subsection{Linear categories}
Let $\pi \colon X \to S$ be a morphism of schemes. The goal of this section is to generalize the following definition due to Kuznetsov to the setting of stable $\infty$-categories.

\begin{Def}[\cite{Kuz:basechange}] \label{def:SlinKuz}
A triangulated subcategory $\cC$  of $\Dperf(X)$ is \emph{S-linear} if for every $C \in \cC$ and $E \in \Dperf(S)$, then $$C \otimes \pi^*E \in \cC.$$ 
Equivalently, $\cC$ is stable by tensoring with objects pulled back from $\Dperf(S)$.
\end{Def}

In order to define $S$-linear categories in the context of stable $\infty$-categories, we need to introduce some notions. A functor $\cC \to \cD$ between stable $\infty$-categories is \emph{exact} if it carries the zero object to the zero object and fiber sequences to fiber sequences. We denote by $\Fun^{\text{ex}}(\cC, \cD)$ the collection of all exact funtors between $\cC$ and $\cD$. The collection of small idempotent complete stable $\infty$-categories with exact functors is organized in an $\infty$-category denoted $\Cat_\st$ \cite[\S 1.1.4]{HA}. 

By the foundational results in \cite{HA} the notion of symmetric monoidal category and that of commutative algebra object therein are established in the framework of $\infty$-categories. In particular, having a commutative algebra object $A$ of an $\infty$-category $\cC$, one can study the theory of commutative modules over $A$. This gives rise to an $\infty$-category $\Mod_A(\cC)$ of modules over $A$, see \cite[\S 4.4.5]{HA}. 

In the case of $\Cat_\st$ we have the following important properties:
\begin{itemize}
\item $\Cat_\st$ has a closed symmetric monoidal structure with tensor product $\otimes$ characterized by the universal property
$$\Fun^{\text{ex}}(\cC \otimes \cD, \cE) \cong \Fun'(\cC \times \cD, \cE),$$
where $\Fun'(\cC \times \cD, \cE)$ denotes the full subcategory of the category of all functors between $\cC \times \cD$ and $\cE$ generated by those which are exact in each variable.
\item The category $\Dperf(X)$ is a commutative algebra object in $\Cat_\st$ with product given by the tensor product of sheaves.
\end{itemize}
\begin{Rem}
Due to the vastness of the subject, we cannot include more details, but the interested reader may consult \cite[\S 4.8.1-4.8.2]{HA}. Note that the tensor product structure on $\Cat_\st$ is inherited by that on a ``larger'' infinity category denoted $\Pr\Cat_\st$, of presentable stable $\infty$-categories. In fact, there is an embedding functor
$\Ind \colon \Cat_{\st} \to \Pr\Cat_\st$ \cite[\S 5.5.7]{HTT}, such that $\Ind(\Dperf(X)) \cong \Dqc(X)$ (when $X$ is quasi-compact and separated).
\end{Rem}

\begin{Def}[\cite{NCHPD}, Definition 2.3] \label{def:SlinAlex}
Fix a quasi-compact separated base scheme $S$. Let 
$$\mathrm{Cat}_S:=\Mod_{\Dperf(S)}(\mathrm{Cat}_\st)$$
be the $\infty$-category of modules over the commutative algebra object $\Dperf(S) \in \Cat_\st$. An \emph{$S$-linear category} is an object in $\Cat_S$. Given two $S$-linear categories $\cC$ and $\cD$, an \emph{$S$-linear functor} between them is a morphism $\cC \to \cD$ in $\Cat_S$.
\end{Def}

If $\cC \in \Cat_S$, then by definition there is a functor
$$\cC \times \Dperf(S) \to \cC, \quad (C, E) \mapsto C \otimes E$$
giving the module structure on $\cC$.

\begin{Ex} \label{ex:Dperf_Slin}
In the geometric situation, let $\pi \colon X \to S$ be a morphism of schemes. Then $\Dperf(X)$ has a natural structure of $S$-linear category with the $\Dperf(S)$-module structure given by 
$$(C, E) \in \Dperf(X) \times \Dperf(S) \mapsto C \otimes \pi^*E \in \Dperf(X).$$
Further, let $\cC \subset \Dperf(X)$ be a stable $\infty$-subcategory of $\Dperf(X)$. If the $\Dperf(S)$-module structure on $\Dperf(X)$ preserves $\cC$, then $\cC$ inherits the structure of $S$-linear category by 
$$(C, E) \in \cC \times \Dperf(S) \mapsto C \otimes \pi^*E \in \cC.$$
This generalizes Definition~\ref{def:SlinKuz} in the triangulated setting.
\end{Ex}

The $\infty$-category $\Cat_S$ is endowed with a symmetric monoidal structure with tensor product denoted by $\otimes_{\Dperf(S)}$ and unit $\Dperf(S)$ by \cite[Theorem 4.5.2.1]{HA} 

An important consequence is that there is a well-defined notion of base change. Indeed, if $f \colon T \to S$ is a morphism of schemes, then \emph{the base change of $\cC$ along $f$} is the $T$-linear category
\begin{equation*}
\cC_T:=\cC_S \otimes_{\Dperf(S)} \Dperf(T) \in \mathrm{Cat}_T    .
\end{equation*}
This defines a base change functor $\Cat_S \to \Cat_T$. A special situation is when this is applied to a point $s \in S$. The base change along $\Spec(k(s)) \to S$ is denoted
\begin{equation*}
\cC_s:=\cC \otimes_{\Dperf(S)} \Dperf(\Spec(k(s)))   
\end{equation*}
and is called the \emph{fiber of $\cC$ at $s$}.

Intuitively, we can think of $S$-linear categories as families of categories over the base scheme $S$.

\subsection{Smooth and proper linear categories}

In the previous section we have introduced the notion of noncommutative schemes over a base $S$. Let us now introduce the property of being smooth and proper over $S$, explaning its connection with classical smooth and proper schemes.

\begin{Def} \label{def:dualizable}
Let $\cC$ be a symmetric monoidal $\infty$-category with tensor product $\otimes$ and unit $\bf{1}$. An object $C \in \cC$
is \emph{dualizable} if there exists an object $\cD \in \cC$ called the dual of $\cC$, and morphisms 
\begin{align*}
\ev & \colon \cD \otimes \cC \to \bf{1}\\    
\coev & \colon \bf{1} \to \cC \otimes \cD
\end{align*}
called evaluation and coevaluation maps, such that the compositions
\begin{align*}
\cC & \xrightarrow{\coev \otimes \id} \cC \otimes \cD \otimes \cC \xrightarrow{\id \otimes \ev} \cC \\    
\cD & \xrightarrow{\id \otimes \coev} \cD \otimes \cC \otimes \cD \xrightarrow{\ev \otimes \id} \cD
\end{align*}
are equivalent to the identity.
\end{Def}

\begin{Def} \label{def:smpr}
An $S$-linear category $\cC$ is \emph{smooth and proper over} $S$ if $\cC$ is a dualizable object in $\Cat_S$.     
\end{Def}

\begin{Rem}
It is possible to define the notions of smooth, resp.\ proper, $S$-linear category as in \cite[Definition 4.5]{NCHPD}. Together, they are equivalent to Definition~\ref{def:smpr} by \cite[Lemma 4.8]{NCHPD}, which further shows that if $\cC$ is smooth and proper over $S$ then its dual is $\cC^\op$. Since in this survey we are interested in smooth and proper categories, we have decided to adopt this packed version of the definition. It has the advantage of being purely in terms of the symmetric monoidal structure on $\Cat_S$.
\end{Rem}

\begin{Ex} \label{ex:Dperf_smpr}
Assume that $\pi \colon X \to S$ is a smooth and proper morphism of schemes, and consider $\cC=\Dperf(X)$. Then $\cC$ is smooth and proper over $S$. Indeed, consider the opposite category $\Dperf(X)^\op$. Taking the dual induces an equivalence $\Dperf(X)^\op \simeq\Dperf(X)$. Using this and   $\Dperf(X) \otimes_{\Dperf(S)} \Dperf(X) \simeq \Dperf(X \times_S X)$ by \cite[Lemma 2.7]{NCHPD}, define the functors
\begin{align*}
\coev= \Delta_* \circ \pi^* &\colon \Dperf(S) \to \Dperf(X \times_S X)\\
\ev=\pi_* \circ \Delta^* &\colon \Dperf(X \times_S X) \to \Dperf(S).
\end{align*}
where $\Delta \colon X \to X \times_S X$ is the diagonal embedding. By definition $\coev$ is represented by $\Delta_*\cO_X$ and $\ev$ is the restriction to the diagonal composed with $\pi_*$.
As $\pi \colon X \to S$ is smooth and proper, these functors are indeed well-defined on perfect complexes since $\pi_*$ preserves perfect complexes \cite[Example 2.2(a)]{LipNeeman}, and $\Delta$ is a regular closed immersion so $\Delta_*\cO_X$ is perfect. The fact that the first composition
$$\Dperf(X) \xrightarrow{\Delta_* \circ \pi^* \otimes \id} \Dperf(X \times_S X \times_S X) \xrightarrow{\id \otimes \pi_* \circ \Delta^*} \Dperf(X)$$
is equivalent to the identity follows by base change through the diagram
\begin{equation*}
\xymatrix{
X \ar[r]^\Delta \ar[d]^\Delta& X \times_S X \ar[r]^-{\pi \times \id} \ar[d]^{\Delta \times \id} & X \\
X \times_S X \ar[d]^{\id \times \pi} \ar[r]^-{\id \times \Delta}& X \times_S X \times_S X & \\
X & &
}    
\end{equation*}
and similarly for the second.
\end{Ex}

We now see that the collection of smooth and proper $S$-linear categories is well-behaved with respect to base change and taking admissible subcategories.
\begin{Prop} \label{prop:smpr_bc}
Let $\cC$ be a smooth and proper $S$-linear category.
\begin{enumerate}
    \item \cite[Lemma 4.10]{NCHPD} \label{prop:smpr_bc1} If $T \to S$ is a morphism of schemes, then the base change $\cC_T$ is a smooth and proper $T$-linear category.
    \item \cite[Lemma 4.15]{NCHPD} \label{prop:smpr_bc2} Let $\cA$ be an $S$-linear subcategory of $\cC$ such that the inclusion functor $i \colon \cA \to \cC$ is $S$-linear and fully faithful. Assume that $i$ has a left adjoint $i^*$. Then $\cA$ is smooth and proper over $S$.
\end{enumerate}
\end{Prop}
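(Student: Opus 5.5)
For part (1) the plan is to use that base change is a symmetric monoidal functor and that such functors preserve duality data. The functor
$$f^* := -\otimes_{\Dperf(S)} \Dperf(T) \colon \Cat_S \to \Cat_T$$
is symmetric monoidal. It sends the unit $\Dperf(S)$ to $\Dperf(T)$, and for $S$-linear $\cC,\cD$ there is a natural equivalence
$$(\cC\otimes_{\Dperf(S)}\cD)_T \simeq \cC_T\otimes_{\Dperf(T)}\cD_T,$$
obtained from associativity of relative tensor products in \cite[\S 4.5]{HA}. Given a dual $\cD$ of $\cC$ with maps $\ev,\coev$ as in Definition~\ref{def:dualizable}, I apply $f^*$ and use these identifications. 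This produces maps $\ev_T\colon \cD_T\otimes\cC_T\to\Dperf(T)$ and $\coev_T\colon\Dperf(T)\to \cC_T\otimes\cD_T$. The two triangle composites for $\cC_T$ are the images under $f^*$ of the triangle composites for $\cC$, hence equivalent to identities. So $\cC_T$ is dualizable in $\Cat_T$, which is exactly smooth and proper by Definition~\ref{def:smpr}. The only point to check is the compatibility of the monoidal structure maps with the triangle identities, which is formal once $f^*$ is known to be symmetric monoidal.

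Part (2) is not formal from dualizability alone, so I would switch to the separate characterization: smooth plus proper, \cite[Definition 4.5, Lemma 4.8]{NCHPD}. The key observation is that $\cA$ is a retract of $\cC$ in $\Cat_S$. We have $i^*\circ i\simeq \id_\cA$ because $i$ is fully faithful, and $i^*$ is $S$-linear because it is left adjoint to an $S$-linear functor. For the latter I would use that the projection formula $i^*(C\otimes E)\simeq i^*C\otimes E$ holds since $E\in\Dperf(S)$ is dualizable, so tensoring with $E$ has an adjoint and commutes with adjoints.

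I then treat the two properties separately, by passing to $\Ind$ and working in $\Pr\Cat_\st$ over $\Dqc(S)$. For properness, the mapping objects of $\cA$ are computed in $\cC$ via $i$; these are perfect over $S$, so $\cA$ is proper. For smoothness, the identity bimodule of $\cA$ is a retract of the restriction of the identity bimodule of $\cC$ along $i\otimes i^{\op}$, via $i^*\otimes i^{*\op}$. Retracts of compact objects are compact, so the diagonal bimodule of $\cA$ is compact, i.e.\ $\cA$ is smooth. Alternatively, if dualizable objects in $\Cat_S$ are closed under retracts, then (2) follows immediately from $\cA$ being a retract of $\cC$. That closure holds in presentable settings, since the relevant dualizable objects are exactly the compact-type ones.

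I expect the main obstacle to be part (2). Specifically, one has to verify that $i^*$ is genuinely $S$-linear in the $\infty$-categorical sense, not just on the triangulated level. One also has to justify that the retraction of $\cA$ off $\cC$ passes to the diagonal bimodules, and that retracts of compact objects remain compact in $\Ind$ of the tensor product. I would rely on \cite{NCHPD} for the identification $\Ind(\cC\otimes_{\Dperf(S)}\cC^\op)\simeq \Fun_{\Dqc(S)}(\Ind\cC,\Ind\cC)$, under which smoothness becomes compactness of the identity functor.
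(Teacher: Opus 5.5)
Your part (1) is the paper's argument. Part (2) is where you diverge. The paper stays inside Definition~\ref{def:smpr}:
\begin{itemize}
\item it takes $\cA^{\op}$ as the dual of $\cA$ and sets $\beta=\ev\circ(i^{\op}\otimes i)$ and $\alpha=(i^*\otimes\rho)\circ\coev$;
\item here $\rho\colon\cC^{\op}\to\cA^{\op}$ is written $(i^{\op})^*$ and called a left adjoint, but for the triangle identities it has to be $(i^*)^{\op}$, the \emph{right} adjoint of $i^{\op}$, which is exactly your $i^{*\op}$;
\item it deduces the triangle identities for $(\alpha,\beta)$ from those of $\cC$ and $i^*\circ i\simeq\id_{\cA}$.
\end{itemize}
This is the ``retracts of dualizable objects are dualizable'' argument you list as an alternative, made explicit. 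So (2) is in fact formal from dualizability.

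Your main route splits dualizability into properness and smoothness via \cite[Definition 4.5, Lemma 4.8]{NCHPD} and works with $\Ind$-categories. It shows which hypothesis is used where: properness of $\cA$ needs only that $i$ is fully faithful and $S$-linear, and $i^*$ enters only for smoothness. The cost is the $\Ind$/bimodule machinery and Lemma 4.8 in both directions. The paper's route is shorter and exhibits the dual and $\ev,\coev$ of $\cA$ explicitly.

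Two points need tightening.

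First, in the smoothness step, do not rely on restriction along $i\otimes i^{\op}$. On $\Ind$-categories this is the right adjoint of $\Ind(i\otimes i^{\op})$. Under $\Ind(\cC\otimes\cC^{\op})\simeq\Fun_{\Dqc(S)}(\Ind\cC,\Ind\cC)$ it is $F\mapsto i^!\circ F\circ i$, with $i^!$ the right adjoint of $\Ind(i)$. There is no reason for it to preserve compact objects unless $i$ also has a right adjoint on small categories. That does hold here by \cite[Lemma 4.13]{NCHPD}, but it is an input you do not need. Instead:
\begin{itemize}
\item since $\cHom_{\cA}(i^*d,a)\simeq\cHom_{\cC}(d,ia)$ and $i^*$ is $S$-linear, $i^*\otimes(i^*)^{\op}$ acts by $F\mapsto i^*\circ F\circ i$;
\item so it sends the diagonal $\coev(\cO_S)$ of $\cC$ to $i^*\circ i\simeq\id_{\cA}$, the diagonal of $\cA$;
\item it is induced by a functor of small categories, so compactness is immediate and no retract argument is needed.
\end{itemize}

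Second, your reason for closure of dualizable objects under retracts (``compact-type'') is not right: in $\Cat_S$, dualizable means smooth and proper. The closure is formal. In a closed symmetric monoidal $\infty$-category such as $\Cat_S$, $X$ is dualizable iff $\underline{\Hom}(X,\mathbf{1})\otimes Y\to\underline{\Hom}(X,Y)$ is an equivalence for all $Y$. This condition is natural in $X$, hence inherited by retracts. Alternatively, write down the data explicitly as the paper does.
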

\begin{proof}
Let $\cD$ be the dual of $\cC$ in $\Cat_S$. Then the base change $\cD_T$ and the base change of the evaluation and coevaluation functors for $\cC$ obtained applying $\Cat_S \to \Cat_T$, satisfy the properties of Definition~\ref{def:dualizable}. This proves \eqref{prop:smpr_bc1}.

For \eqref{prop:smpr_bc2} note first that the assumption $i$ is $S$-linear implies that $i$ respects the $\Dperf(S)$-module structures, equivalently the $\Dperf(S)$-module structure on $\cA$ is induced from that on $\cC$. Now $i$ defines $i^\op \colon \cA^\op \to \cC^\op$ between the opposite categories which is fully faithful, $S$-linear and has a left adjoint $(i^{\op})^*$. Note also that the $S$-linear fully faithful functor $i \otimes i^\op \colon \cA \otimes_{\Dperf(S)} \cA^\op \to \cC \otimes_{\Dperf(S)} \cC^\op $ has left adjoint $i^* \otimes (i^\op)^*$ (see \cite[Lemma 2.12]{NCHPD}). Set
\begin{align*}
\alpha &=i^* \otimes (i^{\op})^* \circ \coev \colon \Dperf(S) \to \cA \otimes_{\Dperf(S)} \cA^\op,    \\
\beta&= \ev \circ i^\op \otimes i \colon \cA^\op \otimes_{\Dperf(S)} \cA \to \Dperf(S),
\end{align*}
where $\ev$ and $\coev$ are the evaluation and coevaluation of $\cC$. Then the statement follows from the commutativity of the diagrams
\begin{equation*}
\xymatrix{
\cC \ar[r]^-{\coev \otimes \id} & \cC \otimes_{\Dperf(S)} \cC^\op \otimes_{\Dperf(S)} \cC \ar[d]^-{i^* \otimes (i^\op)^* \otimes i^*} \ar[r]^-{\id \otimes \ev} & \cC \ar[d]^-{i^*}\\
\cA \ar[u]^-i \ar[r]^-{\alpha \otimes \id}& \cA \otimes_{\Dperf(S)} \cA^\op \otimes_{\Dperf(S)} \cA \ar[r]^-{\id \otimes \beta}& \cA,
}    
\end{equation*}
\begin{equation*}
\xymatrix{
\cC^\op \ar[r]^-{\id \otimes \coev} & \cC^\op \otimes_{\Dperf(S)} \cC \otimes_{\Dperf(S)} \cC^\op \ar[d]^-{(i^\op)^* \otimes i^* \otimes (i^\op)^*} \ar[r]^-{\ev \otimes \id} & \cC^\op \ar[d]^-{(i^\op)^*}\\
\cA^\op \ar[u]^-{i^\op} \ar[r]^-{\id \otimes \alpha}& \cA^\op \otimes_{\Dperf(S)} \cA \otimes_{\Dperf(S)} \cA^\op \ar[r]^-{\beta \otimes \id}& \cA^\op,
}    
\end{equation*}
and the identities for $\ev$ and $\coev$.
\end{proof}

\begin{Rem}
Note that $\cA$ as in Proposition~\ref{prop:smpr_bc}\eqref{prop:smpr_bc2} is admissible in $\cC$, i.e.\ $i$ has both left and right adjoint, by \cite[Lemma 4.13]{NCHPD}.     
\end{Rem}

In the next section we will see examples of smooth and proper $S$-linear categories obtained using the previous proposition. Among them, we are interested in an important class of noncommutative schemes, which generalize the notion of Calabi--Yau varieties.

\begin{Def}
\begin{enumerate}
    \item A \emph{noncommutative $n$-Calabi--Yau variety (CY$n$)} over a field $k$ is a $k$-linear category $\cC$ which is smooth and proper over $k$, such that the Serre functor on the homotopy category of $\cC$ is the shift $[n]$.
    \item A \emph{noncommutative $n$-Calabi--Yau variety} over a scheme $S$ is an $S$-linear category $\cC$ which is smooth and proper over $S$, such that for every field valued point $s \in S(k)$ the fiber $\cC_s$ is a noncommutative $n$-Calabi--Yau variety over $k$.
    \end{enumerate}    
\end{Def}

\subsection{Examples} \label{sec:examples}

In Examples~\ref{ex:Dperf_Slin} and \ref{ex:Dperf_smpr} we have seen that if $\pi \colon X \to S$ is a smooth and proper morphism of schemes, then $\Dperf(X)$ is a smooth and proper $S$-linear category. 

Now assume that $\Dperf(X)$ has an \emph{$S$-linear semiorthogonal decomposition} (see for instance \cite{Kuz:surveysod} for an excellent survey in the triangulated setting and \cite[\S 3]{NCHPD} were everything is translated in the $\infty$-categorical language). Roughly speaking, it consists of a collection of $S$-linear categories $\cC_1, \dots, \cC_m$ with the following properties:
\begin{itemize}
\item (Stability under $\otimes$ by $\Dperf(S)$) The natural $\Dperf(S)$-module structure on $\Dperf(X)$ preserves $\cC_i$ for every $i$. Equivalently, the structure of $S$-linear category on $\cC_i$ is that of Example~\ref{ex:Dperf_Slin}.
\item (Semiorthogonality condition) $\pi_*\cHom_X(C_i, C_j)=0$ for every $C_i \in \cC_i$, $C_j \in \cC_j$, $i>j$, where $\cHom_X(-,-)$ denotes the derived sheaf Hom on $X$ (this is the analog condition required in the triangulated setting by \cite[Lemma 2.7]{Kuz:basechange}).
\item (Generation) For every $E \in \Dperf(X)$ there is a diagram
\begin{equation*}
\xymatrix{
0=E_m \ar[rr] & & E_{m-1} \ar[dl] \ar[r]& \dots \ar[r] & E_1 \ar[rr]& & E_0=E  \ar[dl] \\
 & C_m \ar[lu] &  & &  & C_1  \ar[lu] &  
}    
\end{equation*}
with $C_i \in \cC_i$ and the triangles are fiber sequences (which are exact triangles in the homotopy category). 
\end{itemize}
The standard notation is
$$\Dperf(X)=\langle \cC_1, \dots, \cC_m \rangle.$$
In this situation, the components $\cC_i$ are smooth and proper in $\Cat_S$. Indeed, assume first that $m=2$. Since $\cC_1$ is left admissible, by Proposition~\ref{prop:smpr_bc}\eqref{prop:smpr_bc2} it follows that $\cC_1$ is smooth and proper. Since $\cC_1^\perp$ is equivalent to ${}^\perp\cC_1 \simeq \cC_2$, it follows that $\cC_2$ is left admissible and thus smooth and proper by Proposition~\ref{prop:smpr_bc}\eqref{prop:smpr_bc2}. This implies the result for any $m$. See \cite[Lemma 4.15]{NCHPD} for more details.

Let us make some explicit examples of this sort.

\begin{Ex}[Calabi--Yau varieties]
Let $X$ be a smooth projective variety over a field $k$. If $X$ is Calabi--Yau, then the Serre functor of $\Dperf(X)$ satisfies
$$S_{\Dperf(X)}(-)=- \otimes \omega_X[n]=[n]$$
where $n$ is the dimension of $X$. Then $\Dperf(X)$ is a noncommutative CY$n$ variety over $k$. More generally, if $X \to S$ is a smooth and proper family of Calabi--Yau varieties, then $\Dperf(X)$ is a noncommutative CY$n$ variety over $S$.
\end{Ex}

\begin{Ex}[Cubic threefolds]
A cubic threefold $Y$ over a field $k$ is a smooth degree three hypersurface in $\bP^4$. As first noted by Kuznetsov, Kodaira vanishing theorem implies that the line bundles $\cO_Y, \cO_Y(1)$ form an exceptional collection. Thus there is a semiorthogonal decomposition
$$\Dperf(Y)= \langle \Ku(Y), \cO_Y, \cO_Y(1) \rangle$$
where $\Ku(Y):=\langle\cO_Y, \cO_Y(1) \rangle^\perp$ is the right orthogonal complement of the category generated by the exceptional line bundles. Then $\Ku(X)$ is a smooth and proper $k$-linear category. By \cite{kuznetsov-CY} its Serre functor satisfies $S_{\Ku(Y)}^3=[5]$, thus $\Ku(Y)$ is a fractional CY category.

More generally, let $\pi \colon Y \to S$ be a smooth and proper morphism of schemes such that for every point $s \in S$ the fiber $Y_s$ is a cubic threefold. Let $\cO_Y(1)$ be the relative very ample line bundle. Then there is an $S$-linear semiorthogonal decomposition of the form
$$\Dperf(Y)=\langle \Ku(Y), \pi^*\Dperf(S), \pi^*\Dperf(S) \otimes \cO_Y(1) \rangle,$$
where $\Ku(Y)$ is again defined as the residual right orthogonal complement. Indeed, by definition $\pi^*\Dperf(S), \pi^*\Dperf(S) \otimes \cO_Y(1)$ are preserved by the $\Dperf(S)$-module structure. This implies the same holds for $\Ku(Y)$. The semiorthogonality condition follows by base change to any point $s \in S$, and generation follows by construction of the decomposition.
\end{Ex}

\begin{Ex}[Cubic fourfolds] \label{ex:cubicfourfolds}
Similar results hold in the case of (smooth and proper families of) cubic fourfolds. Indeed, there is an $S$-linear semiorthogonal decomposition $$\Dperf(Y)=\langle \Ku(Y), \pi^*\Dperf(S), \pi^*\Dperf(S) \otimes \cO_Y(1), \pi^*\Dperf(S) \otimes \cO_Y(2) \rangle,$$    
where $\cO_Y(1)$ is the relative very ample line bundle of a smooth an proper family $\pi \colon Y \to S$ of cubic fourfolds. If $S=\Spec(k)$ where $k$ is a field, then the Serre functor of $\Ku(Y)$ is the shift $[2]$ by \cite{kuznetsov-CY}. Further, $\Ku(Y)$ has the same Hochschild cohomology as that of the derived category of a K3 surface. In fact, in the moduli space of cubic fourfolds, there are codimension $1$ loci where $\Ku(Y) \simeq \Db(X)$ for a K3 surface $X$, but for $Y$ very general, this does not hold. In this way, $\Ku(Y)$ is a \emph{noncommutative K3 surface}.

In general, since by base change $\Ku(Y)_s \simeq \Ku(Y_s)$ for every point $s \in S$, we have that $\Ku(Y)$ is a noncommutative CY$2$ variety over $S$ and in addition it forms a family of noncommutative K3 surfaces over $S$.
\end{Ex}

\begin{Ex}[Cubic sevenfolds] \label{ex:cubicseven}
Among the other cubic hypersurfaces, it is worth to mention the case of dimension seven. Here $\Ku(Y)= \langle \pi^*\Dperf(S) \otimes \cO_Y(k) \rangle_{k=0, \dots, 5}^\perp$ is a noncommutative CY theerfold over $S$ by \cite[Corollary 4.1]{kuznetsov-CY}.
\end{Ex}

\begin{Ex}[Gushel--Mukai varieties] \label{ex:GMvarieties}
Let $k$ be a field with $\text{char}(k)=0$. A Gushel--Mukai (GM) variety over $k$ of dimension $2 \leq n \leq 6$ is a smooth intersection
$$X=\rC\rG(2,5) \cap Q \subset \bP^{10}$$
where $\rC\rG(2,5)$ is the cone over the Grassmannian $\rG(2,5)$ embedded via the Pl\"ucker embedding in a $10$-dimensional projective space, and $Q$ is a quadric hypersurface in a linear subspace $\bP^{n+4} \subset$ of $\bP^{10}$. By \cite{Kuz-Perry:GM} there is a semiorthogonal decomposition of the form
$$\Dperf(X)= \langle \Ku(X), \cO_X, \cU_X^\vee, \dots, \cO_X(n-3), \cU_X^\vee(n-3) \rangle$$
where $\cU_X$ denotes the restriction of the tautological bundle on the Grassmannian to $X$. According to the dimension, the behavior of $\Ku(X)$ is different: if $n$ is even then $\Ku(X)$ is a noncommutative K3 surface (in fact if $n=2$, then $\Ku(X)=\Dperf(X)$ as $X$ is a K3 surface), while if $n$ is odd then the Serre functor of $\Ku(X)$ satisfies $S_{\Ku(X)}=\iota[2]$, where $\iota$ is an involutive autoequivalence generating a $\bZ/2$-group action on $\Ku(X)$. The latter are examples of \emph{Enriques categories}. Similar results hold in the relative setting over a base. See \cite[Lemma 5.9]{BP:Kuznetsov-conjecture} for details.
\end{Ex}

\begin{Ex}[Twisted sheaves]
Given a smooth and proper morphism of schemes $\pi \colon X \to S$, a natural way of constructing a noncommutative scheme is to fix a Brauer class $\alpha$ on $X$ and consider the derived category of $\alpha$-twisted perfect complexes $\Dperf(X, \alpha)$ on $X$. We refer to \cite{HuybrechtsStellari:Twisted} for definitions and details. Note that $\Dperf(X, \alpha)$ is a smooth and proper $S$-linear category. Indeed, it appears as a semiorthogonal component of a Severi--Brauer scheme by \cite{Bernardara}. This fact has a beautiful application to the period-index problem in the case of abelian threefolds done in \cite{HotchPerry}.
\end{Ex}

\begin{Ex}[Prime Fano threefolds] \label{ex:primeFano3}
Cubic threefolds and GM threefolds are examples of Fano threefolds of Picard rank one. Indeed, for such $X$ let $H$ be a positive generator of $\Pic(X)$. Then the canonical divisor of $X$ satisfies $K_X=-i_XH$, where the positive number $i_X$ is called the index of $X$. In \cite{Isk, MU} a full classification of prime Fano threefolds is provided in terms of the index and the degree $d_X=H^3$. We summarize here the results on $\Dperf(X)$:
\begin{itemize}
\item If $i_X=4$, then $X \cong \bP^4$. Beilinson's result \cite{Beilinson:EquivalencePn} provides a full exceptional collection for $\Dperf(X)$.
\item If $i_X=3$, then $X$ is a quadric threefold. Then $\Dperf(X)$ has a full exceptional collection by Kapranov's result \cite{Kapranov:flagvarieties}.
\item Assume $i_X=2$. In this case, $1 \leq d_X \leq 5$ (for instance when $d_X=3$, then $X$ is a cubic threefold) and by Kodaira vanishing we have the semiorthogonal decomposition
$$\Dperf(X)= \langle \Ku(X), \cO_X, \cO_X(H) \rangle.$$
\item Assume $i_X=1$. Then the degree is even and satisfies $2 \leq d_X  \leq 22$, $d_X \neq 20$. If $d_X \geq 10$ (for instance if $d_X=10$, then $X$ is a GM threefold), then by \cite{BLMS:Kuznetsov, BKM} we have
$$\Dperf(X)= \langle \Ku(X), \cE_X, \cO_X \rangle$$
where $\cE_X$ is a vector bundle on $X$, while if $2 \leq d_X \leq 8$, then
$$\Dperf(X)= \langle \Ku(X), \cO_X \rangle.$$
\end{itemize}
As before, analogous results hold in the relative setting of families of Fano threefolds over a base.
\end{Ex}

So far we have only considered examples included in the following definition.
\begin{Def}
Let $\cC \in \Cat_S$ be smooth and proper over $S$. We say that $\cC$ has \emph{geometric origin} if there is a smooth and proper morphism of schemes $X \to S$ and an $S$-linear fully faithful functor $\cC \to \Dperf(X)$ with left adjoint.
\end{Def}

It would be interesting to understand whether there exist smooth and proper $S$-linear categories which have not geometric origin, as asked in \cite[Question 4.4]{orlov-gluing}. The next example could be a promising case to get a positive answer.

\begin{Ex}[Noncommutative abelian surfaces] \label{ex:ncas}
Let $A$ be a complex abelian surface. Denote by $X$ the Kummer K3 surface obtained as the minimal resolution of the quotient $A /_{-1}$, where $-1$ is the involution defined by the multiplication by $-1$ on $A$. By \cite{BKR}
the derived category of $X$ is identified with the derived category of equivariant sheaves on $A$: there is an equivalence
\begin{equation*}
\Dperf(X) \simeq \Dperf(A)^{\bZ/2}    
\end{equation*}
where $\Dperf(A)^{\bZ/2}$ denotes the invariant category associated to the $\bZ/2$-action on $\Dperf(A)$. See for instance \cite[\S 2]{PPZ_enriques} for more details about group actions on linear categories. In this geometric case, we have $\Dperf(A)^{\bZ/2} \simeq \Dperf([A/(\bZ/2)])$, where $[A/(\bZ/2)]$ is the quotient stack.  

By \cite[Theorem 4.2]{Elagin:equivariant} there is an induced $\bZ/2$-action on $\Dperf(A)^{\bZ/2}$ such that the associated invariant category is equivalent to the original category $\Dperf(A)$. Together with the above equivalence, we get an induced $\bZ/2$-action on $\Dperf(S)$ and an equivalence
\begin{equation*}
\Dperf(X)^{\bZ/2}   \simeq \Dperf(A).  
\end{equation*}
Now the naive idea is to deform $\Dperf(X)$ together with the $\bZ/2$-action and take the invariant category to produce a noncommutative deformation of $\Dperf(A)$.

It turns out that working directly with $X$ does not work. Nevertheless, in \cite{BPPZ} we show that if $T \to S$ is a family of $\Lambda$-polarized K3 surfaces, where $\Lambda$ is a certain lattice of signature $(1, 15)$, with a point $s_0 \in S(\bC)$ such that $\Dperf(T_{s}) \simeq \Dperf(X)$, then the induced $\bZ/2$-action on $\Dperf(T_{s})$ deforms to a $\bZ/2$-action on $\Dperf(T)$. Passing to the invariant category, we obtain a smooth and proper $S$-linear category $\cA$. By \cite[Lemma 6.5]{BO:survey} and base change it follows that $\cA$ is a noncommutative CY$2$ variety. Since the central fiber is $\Dperf(A)$, we can think of $\cA$ as a family of \emph{noncommutative abelian surfaces}.
\end{Ex}

\section{Stability conditions on noncommutative schemes} \label{sec:stabcondncs}

We introduce the notion of stability condition on a smooth and proper $S$-linear category following \cite{BLMNPS:families}. First, we recall the usual notion of stability introduced by Bridgeland when $S$ is a point, then we discuss the relative setting and the results on moduli spaces of semistable objects. 

\subsection{Bridgeland stability conditions}
We start by considering the case of a linear category $\cC$ over a field $k$. The goal of this section is to review the classical notion of stability condition on $\cC$, introduced by Bridgeland in \cite{Bridgeland:Stab}.

\begin{Def}[\cite{HA}, Definition 1.2.1.4]
A \emph{t-structure} $\tau$ on $\cC$ is a t-structure on its homotopy category $\rh(\cC)$. 
\end{Def}

Given a t-structure $\tau$ on $\cC$, we denote by $\cC^{\leq n}$ and $\cC^{\leq n}$ the full subcategories of $\cC$ spanned by the objects in $\rh(\cC)^{\leq n}=\rh(\cC)^{\leq 0}[-n]$ and $\rh(\cC)^{\geq n}=\rh(\cC)^{\geq 0}[-n]$, respectively. The heart of $\tau$ is the full subcategory $\cC^{\heartsuit}=\cC^{\leq 0} \cap \cC^{\geq 0}$. It is an abelian category \cite{Perverse}.
We say that $\tau$ is \emph{bounded} if
$$\cC= \bigcup_{m,n \in \bZ} \cC^{\leq n} \cap \cC^{\geq m}.$$ 

Denote by $\rK_0(\cC)$ the Grothendieck group of $\cC$ and fix a group morphism $v \colon \rK_0(\cC) \to \Lambda$, where $\Lambda$ is a free abelian group of finite rank.

\begin{Def} \label{def:stabcond_Bridgeland}
A \emph{stability condition on $\cC$ with respect to $v$} is a pair $\sigma=(\tau, Z)$, where $\tau$ is a bounded t-structure on $\cC$ and $Z \colon \Lambda \to \bC$ is a group morphism (usually called central charge) satisfying the following properties:
\begin{itemize}
\item $Z \circ v$ is stability function on the heart $\cC^{\heartsuit}$. Explicitly, for every $E \in \cC^{\heartsuit}$, $\Im(Z(E)) \geq 0$ and if equality holds then $\Re(Z(E))<0$;
\item $\cC^{\heartsuit}$ satisfies the Harder--Narasimhan property with respect to the slope defined by $Z$. Explicitly, for every $E \in \cC^{\heartsuit}$, the slope of $E$ with respect to $Z$ is 
$$\mu_Z(E)=
\begin{cases}
-\frac{\Re(Z(E))}{\Im(Z(E))}, \quad \text{if }   \Im(Z(E))>0 \\
+\infty, \quad \text{if }   \Im(Z(E))=0.
\end{cases}
$$
We say that $E$ in $Z$-semistable, resp.\ stable, if for every proper suboject $F \subset E$ we have $\mu_Z(F) \leq \mu_Z(E)$, resp.\ $\mu_Z(F)< \mu_Z(E)$.
Then for every $E \in \cC^{\heartsuit}$ there is a filtration
$$0=E_0 \hookrightarrow E_1 \hookrightarrow \dots \hookrightarrow E_m=E$$
such that $E_i/E_{i-1}$ is $Z$-semistable and $\mu_Z(E_1/E_0) > \dots >\mu_Z(E_m/E_{m-1})$.
\item The support property holds. Explicitly, there is a quadratic form $Q$ on $\Lambda \otimes \bR$ such that $Q|_{\ker(Z)}$ is negative definite, and $Q(E) \geq 0$ for every semistable object $E \in \cC^{\heartsuit}$.
\end{itemize}
\end{Def}

\begin{Ex}[Slope stability on curves]
\label{ex:slopestab}   
Let $X$ be a smooth projective curve. A natural choice for $\Lambda$ is the numerical Grothendieck group of $X$, which is identified with $\bZ \oplus \bZ$ through the rank and the degree. Consider the standard t-structure on $\Db(X)$ whose heart is $\Coh(X)$ and $Z \colon \Lambda \to \bC$ defined by
$$Z(-)= -\deg(-)+ \sqrt{-1}\rk(-).$$
This data defines a stability condition on $\Db(X)$ which generalizes the classical slope stability for vector bundles.
\end{Ex}

\begin{Ex}[Tilt stability on surfaces]
\label{ex:tiltstab} 
Let $X$ be a smooth projective surface. The construction in Example~\ref{ex:slopestab} does not work in this case, since for instance $Z([\cO_x])=0$ for any point $x \in X$. The solution is to change the bounded t-structure through the tilting process. More precisely, fix an ample class $H$ on $X$ and a real number $s$. For every $E \in \Coh(X)$ define the slope
$$\mu_Z(E)=\begin{cases}
-\frac{H^2\ch_1(E)}{H^3\rk(E)} & \text{if } \rk(E)>0 \\
+\infty & \text{otherwise}
\end{cases}$$
and the associated notion of slope stability.
By \cite{Happel-al:tilting} we can consider the t-structure whose heart is 
$\Coh^s(X)= \langle \cT^s, \cF^s[1] \rangle$
given as the extension-closure of the subcategories 
$$\cT^s:=\lbrace E \in \Coh(X): \text{ all slope semistable factors } F \text{ of } E \text{ satisfy } \mu_Z(F)> s \rbrace,$$
$$\cF^s:=\lbrace E \in \Coh(X): \text{ all slope semistable factors } F \text{ of } E \text{ satisfy } \mu_Z(F) \leq s \rbrace.$$
Similarly to Example~\ref{ex:slopestab}, consider the lattice $\Lambda \subset \bQ^3$ spanned by
$$(H^2\rk(E), H\ch_1(E), \ch_2(E)) \quad \text{for every } E \in \Coh(X).$$
For $q \in \bR$ such that $q > \frac{1}{2} s^2$, define $Z_{s, q} \colon \Lambda \to \bC$ by
$$Z_{s,q}(E)=-(\ch_2(E) - qH^2\rk(E))+ \sqrt{-1}(H\ch_1(E)- sH^2\rk(E)).$$
This data defines a continuous family of stability conditions on $\Db(X)$ by \cite{Bridgeland:K3, ArcBertram}. We suggest the beautiful survey \cite{MS:lectures} for details.
\end{Ex}

In the above examples, the existence of well-behaved moduli spaces of semistable objects is known by the classical works of Mumford, Gieseker, Maruyama, Simpson for sheaves and Toda for the case of surfaces (see \cite{Toda:moduliK3} for K3 surfaces which also applies to any surface). However, this is not guaranteed for a stability condition as in Definition~\ref{def:stabcond_Bridgeland}. In the next section, we will see how to overcome this issue, by introducing a new definition of stability condition which works in the relative setting and comes with good moduli spaces. 

\subsection{Stability conditions in families}

The notion of stability condition on a semiorthogonal component of the derived category of a flat and proper family of schemes was introduced in the foundational work \cite{BLMNPS:families}. Roughly speaking, it consists of a collection of stability conditions on each fiber over the base satisfying some gluing and compatibility conditions in order to have well-behaved moduli spaces. We suggest the reader to look at \cite[Definition 1.1]{BLMNPS:families} for an informal but clarifying definition. In \cite{BPPZ} we establish this notion in the setting of smooth and proper linear categories over a base. In particular, we do not require that the category has geometric origin, but we assume smoothness and properness. Here we summarize the key ingredients for the definition.

Let $\cC$ be a smooth and proper linear category over a scheme $S$. The first property a stability condition on $\cC$ over $S$ should enjoy is that its central charge is locally constant in families of objects. If $\cC$ has geometric origin, then by \cite[Definition 21.1, Remark 21.4]{BLMNPS:families} one can define the relative numerical Grothendieck group $\rK_{\num}(\cC/S)$ so that a group morphism $v \colon \rK_{\num}(\cC/S) \to \Lambda$, where $\Lambda$ is a finite rank free abelian group, is universally locally constant: for every scheme $T \to S$ and every $T$-perfect object $E \in \rD(X_T)$ such that $E_t \in \cC_t$ for every $t \in T$ the function $T \to \Lambda$ given by $t \to v([E_t])$ is locally constant. This definition does not work directly in the non-geometric case, since it requires that the numerical Grothendieck group of $\cC$ is finitely generated, which is not known if $\cC$ is smooth and proper. This motivates the following definition.

\begin{Def}[\cite{BPPZ}, Definition 9.5] \label{def:grothgroup}
\begin{itemize}
\item Let $S=\Spec(k)$ where $k$ is a field. 
The \emph{universal Grothendieck group} $\brK_0(\cC)$ of $\cC$ over $k$ is defined by 
\begin{equation*}
\brK_0(\cC) = \colim_{\ell/k} \rK_0(\cC_{\ell}), 
\end{equation*}
where the colimit is taken over the base change for all field extensions $\ell/k$. 
\item The \emph{universally locally constant Grothendieck group} $\brK_0(\cC/S)$ of $\cC$ over $S$ is the quotient of $\bigoplus_{s \in S} \brK_0(\cC_s)$ by the subgroup generated by elements of the form 
$$[E_{t_1}] - [E_{t_2}]$$
for every connected scheme $T \xrightarrow{f} S$, every pair of points $t_1, t_2 \in T$, every object $E \in \cC_T$, where $[E_{t_i}]$ denotes the class of $E_{t_i} \in \cC_{t_i}$ in the group $\brK_0(\cC_{t_i})=\brK_0(\cC_{f(t_i)})$.    
\end{itemize}
\end{Def}

\begin{Rem} \label{Rem:univelcGg_bc}
\begin{enumerate}
    \item \label{Rem:univelcGg_bc1} Note that if $T \to S$ is connected and $E \in \cC_T$, then the class $[E_t] \in \brK_0(\cC/S)$ is constant for every $t \in T$ by definition, so we denote it by $[E]$.
    \item The above definition is compatible with base change: if $f \colon S' \to S$ is a morphism of schemes then the isomorphisms $\brK_0(\cC_{s'}) \cong \brK_0(\cC_{f(s')})$ for every $s' \in S'$ (provided by the colimit construction) induce the morphism 
    \begin{equation*}
    \brK_0(\cC_{S'}/S') \to \brK_0(\cC/S).
    \end{equation*}
\end{enumerate}    
\end{Rem}

Now as before Definition~\ref{def:stabcond_Bridgeland}, fix a group morphism $\bv \colon \brK_0(\cC/S) \to \Lambda$, where $\Lambda$ is a free abelian group of finite rank.

\begin{Def}
A \emph{fiberwise collection of stability conditions on $\cC$ over $S$ with respect to $\bv$} is a pair $\sigma=(\tau, Z)$  where
\begin{itemize}
    \item $\tau=(\tau_s)_{s \in S}$ is a fiberwise collection of t-structures;
    \item $Z \colon \Lambda \to \bC$ is a group morphism such that for every $s \in S$ the pair $\sigma_s=(\tau_s, Z)$ is a stability condition with respect to 
    $$\rK_0(\cC_s) \to \brK_0(\cC_s) \to \brK_0(\cC/S) \xrightarrow{\bv} \Lambda;$$
    \item (Support property) There is a quadratic form $Q$ on $\Lambda \otimes \bR$ such that $Q|_{\ker(Z)}$ is negative definite, and for every $s \in S$ and every $\sigma_s$-semistable object $E \in \cC_s$ we have $Q(E) \geq 0$.
\end{itemize}
\end{Def}

Following \cite{BLMNPS:families} there are conditions which make a fiberwise collection of stability conditions into a family of stability conditions. 

\begin{Def}
\label{definition-stability-families} 
Let $S$ be a Nagata scheme, and $\cC \in \Cat_S$ smooth and proper, satisfying the lifting property over curves. A \emph{stability condition on $\cC$ over $S$ with respect to~$\bv$} is a fiberwise collection of stability conditions $\sigma$ on $\cC$ over $S$ with respect to $\bv$ such that: 
\begin{enumerate} 
\item \label{definition-stability-families_1} $\sigma$ integrates to HN structures over curves,  
\item \label{definition-stability-families_2} 
$\sigma$ satisfies universal openness of geometric stability, 
\item \label{definition-stability-families_3}
$\sigma$ satisfies boundedness of geometrically stable objects.
\end{enumerate}    
\end{Def}

Let us make some comments on the definition:
\begin{itemize}
\item The assumption that $S$ is Nagata simplifies the use of the valuative criterion for universal closedness and properness as explained in \cite[\S 11.4]{BLMNPS:families}. 
\item The lifting property over curves requires that for any essentially of finite type
morphism $C \to S$ where $C$ is a Dedekind scheme, for any open subscheme $U \subset C$ the objects in $\cC_U$ lift to $\cC_C$. This condition holds if $\cC$ has geometric origin, otherwise it should be added as an assumption.
\item Condition~\eqref{definition-stability-families_1} says, roughly speaking, that after base change $C \to S$ as above, the hearts of the base change stability conditions $\sigma_c$ to any point $c \in C$ integrate to a global heart on $C$. This notion generalizes that of relative Harder–Narasimhan filtration and we refer to \cite[\S 13]{BLMNPS:families} for the precise treatment.
\item Condition~\eqref{definition-stability-families_2} holds if for every morphism of schemes $T \to S$ and $E \in \cC_T$, the locus of points $t \in T$ such that $E_t \in \cC_t$  is geometrically $\sigma_t$-stable is open in $T$. Recall that an object in $\cC$ over a field $k$ is geometrically stable if it is stable after base change to $\bar{k}$.
\item We discuss \eqref{definition-stability-families_3} in the next section.
\end{itemize}

\subsection{Moduli spaces}

Let $\cC \in \Cat_S$ be smooth and proper, where $S$ is a Nagata scheme. The moduli stack of gluable objects in $\cC$ is the functor 
\begin{alignat}{3}
\nonumber \cM_{\gl}(\cC/S) \colon  &  (\Sch/S)^{\op} ~~ &  \to & ~~ \mathrm{Grpd} & \\ 
\nonumber & \qquad T & \mapsto 
& \left\{E \in \cC_T \sth  \Ext^{<0}_{k(t)}(E_t, E_t) = 0 \text{ for all } t \in T \right\} & . 
\end{alignat} 
Requiring additionally that $E_T$ is simple, i.e. $\Hom_{k(t)}(E_t, E_t) = k(t)$ for all $t \in T$, defines the subfuctor $s\cM_{\gl}(\cC/S)$ of simple gluable objects.

As shown by Lieblich in the geometric case \cite{Lieblich:mother-of-all} and by the work of To\"{e}n-Vaqui\'{e} in general \cite{toen-moduli}, $\cM_{\gl}(\cC/S)$ is indeed an algebraic stack locally of finite presentation over $S$. Further, $s\cM_{\gl}(\cC/S)$ is an open substack of $\cM_{\gl}(\cC/S)$ which is a $\bG_m$-gerbe over an algebraic space $s\rM_{\gl}(\cC/S)$ locally of finite presentation over $S$.  

Now assume that $\sigma$ is a stability condition on $\cC$ over $S$ with respect to $\bv$ and consider the following substacks of $\sigma$-semistable objects. Let $v \in \Lambda$ and $\phi \in \bR$ with $Z(v) \in \bR_{>0} e^{i\pi\phi}$. Let 
$$\cM_{\sigma}^{\st}(v,\phi) \colon  (\Sch/S)^{\op}  \to  \mathrm{Grpd} $$
be the functor defined by
\begin{equation*}
\cM_{\sigma}^{\st}(v, \phi)(T) 
= \set{  E \in \cC_T \sth 
\begin{array}{c}
\text{$E_t$ is geometrically $\sigma_t$-stable of class} \\ 
\text{$\bv(E_t) = v$ and phase $\phi$ for all $t \in T$}
\end{array} }. 
\end{equation*} 
Similarly define the functor
$$\cM_{\sigma}(v,\phi) \colon  (\Sch/S)^{\op}   \to \mathrm{Grpd} $$
by
\begin{equation*}
\cM_{\sigma}(v,\phi)(T) 
= \set{  E \in \cC_T \sth 
\begin{array}{c}
\text{$E_t$ is $\sigma_t$-semistable of class} \\ 
\text{$\bv(E_t) = v$ and phase $\phi$ for all $t \in T$}
\end{array} }. 
\end{equation*}
Note that $\cM_{\sigma}^{\st}(v,\phi)$ is an open substack of $\cM_{\gl}(\cC/S)$. Indeed, by Remark~\ref{Rem:univelcGg_bc}\eqref{Rem:univelcGg_bc1} for any base change $T \to S$ and $E \in \cC_T$ the function $T \to \Lambda$ given by $t \mapsto \bv([E_t])$ is locally constant, and $\sigma$ satisfies universal openness of geometric stability. In fact, adapting the proof in \cite[Lemma 20.8]{BLMNPS:families} it is possible to show that $\cM_{\sigma}(v,\phi)$ is an open substack of $\cM_{\gl}(\cC/S)$.

Now let us exploit Condition~ \eqref{definition-stability-families_3} of Definition~\ref{definition-stability-families}. Saying that $\sigma$ satisfies boundedness of geometrically stable objects means that the substack $\cM_{\sigma}^{\st}(v, \phi) \subset \cM_{\gl}(\cC/S)$ is bounded for every $v$ and $\phi$, i.e.\ there exist a scheme $B$ of finite type over $S$ and an object $\cE \in \cM_{\sigma}^{\st}(v, \phi)(B)$ such that for every geometric point $\bar{s} \in S$ and $E \in \cM_{\sigma}^{\st}(v, \phi) (k(\bar{s}))$ there exists a $k(\bar{s})$-rational point $b \in B \times_S \Spec(k(\bar{s}))$ such that $\cE_b \cong E$. By \cite[Lemma 21.20]{BLMNPS:families}, which adapts to the setting of smooth and proper linear categories, this condition implies that $\cM_{\sigma}(v,\phi)$ is bounded.

We are ready to state the beautiful result of \cite{BLMNPS:families}.

\begin{Thm}[{\cite[Lemma 21.22 and Theorem 21.24]{BLMNPS:families}}] \label{thm:moduli}
Let $\sigma$ be a stability condition on $\cC$ over $S$ with respect to a group morphism $\bv \colon \brK_0(\cC/S) \to \Lambda$, where $\Lambda$ is a free abelian group of finite rank. Let $v \in \Lambda$ and $\phi \in \bR$ with $Z(v) \in \bR_{>0} e^{i\pi\phi}$.
    \begin{enumerate}
        \item \label{modulithm1} $\cM_{\sigma}^{\st}(v, \phi)$ is an algebraic stack of finite type over $S$, and it is a $\bG_m$-gerbe over an algebraic space $\rM_{\sigma}^{\st}(v, \phi)$ of finite type over $S$.  
        \item \label{modulithm2} $\cM_{\sigma}(v, \phi)$ is an algebraic stack of finite type over $S$, which is universally closed over $S$. If $\cM_{\sigma}(v,\phi) = \cM_{\sigma}^{\st}(v, \phi)$, then $\cM_{\sigma}(v,\phi)$ is a $\bG_m$-gerbe over an algebraic space $\rM_{\sigma}(v,\phi)$ proper over $S$. 
        \item \label{modulithm3} If $S$ has characteristic $0$, then $\cM_{\sigma}(v,\phi)$ admits a good moduli space $\rM_{\sigma}(v, \phi)$ which is an algebraic space proper over $S$. 
    \end{enumerate}
\end{Thm}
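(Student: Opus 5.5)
The plan is to deduce all three parts from the structure already established for $\cM_{\gl}(\cC/S)$: it is an algebraic stack locally of finite presentation over $S$, with $s\cM_{\gl}(\cC/S)$ an open substack that is a $\bG_m$-gerbe over an algebraic space $s\rM_{\gl}(\cC/S)$. We also use the openness of $\cM^{\st}_\sigma(v,\phi)$ and $\cM_\sigma(v,\phi)$ in $\cM_{\gl}(\cC/S)$, and boundedness (Condition~\eqref{definition-stability-families_3} together with the adaptation of \cite[Lemma 21.20]{BLMNPS:families}).

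For \eqref{modulithm1}, we first note that an open substack of a stack locally of finite presentation which is bounded is of finite type. Indeed, the bounding family $B \to \cM^{\st}_\sigma(v,\phi)$ is surjective on geometric points, and $B$ is of finite type, so the open substack is quasi-compact. Next, geometrically stable objects are simple: any endomorphism of a stable object in the heart (up to shift) is zero or an isomorphism, so $\Hom = \bar k$ after base change to $\bar k$, hence $\Hom = k$. Thus $\cM^{\st}_\sigma(v,\phi) \subset s\cM_{\gl}(\cC/S)$ is open. It is saturated for the $\bG_m$-gerbe structure, since it is defined on objects. Therefore it is the preimage of an open subspace $\rM^{\st}_\sigma(v,\phi) \subset s\rM_{\gl}(\cC/S)$, which is of finite type because its gerbe cover is.

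For \eqref{modulithm2}, finite type follows exactly as above. Universal closedness is the main point. Since $S$ is Nagata and the stack is of finite type, we apply the existence part of the valuative criterion in the form of \cite[\S 11.4]{BLMNPS:families}: we must show that, given a DVR $R$ essentially of finite type over $S$ with fraction field $K$, every $\sigma_K$-semistable object over $K$ extends, possibly after a finite extension of $R$, to a family over $\operatorname{Spec} R$ whose special fiber is semistable. The argument has three steps.
\begin{itemize}
\item Use the lifting property over curves to extend the object to some object of $\cC_R$.
\item Use Condition~\eqref{definition-stability-families_1}: $\sigma$ integrates to an HN structure, giving a heart on $\cC_R$ that restricts to the hearts on the fibers.
\item Run Langton's argument, as in \cite[\S 12--13]{BLMNPS:families}, replacing the extension by elementary modifications along the destabilizing quotient of the special fiber, which is a torsion-type operation inside the integrated heart. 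The process terminates by the support property together with the discreteness of $Z$ on $\Lambda$, which forces a decreasing invariant to stabilize.
\end{itemize}
This Langton step is where I expect the real difficulty. Generalizing it to an abstract smooth proper $\cC$ needs only the integrated heart and the lifting property, which is why both are built into Definition~\ref{definition-stability-families}. If $\cM_\sigma = \cM^{\st}_\sigma$, then part \eqref{modulithm1} gives the gerbe over $\rM_\sigma(v,\phi)$. This space is separated: two extensions of a stable object that agree generically agree, because any nonzero morphism between stable objects of the same phase is an isomorphism, and this is checked via the valuative criterion for separatedness with the integrated heart. Universal closedness then descends along the gerbe, so $\rM_\sigma(v,\phi)$ is proper.

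For \eqref{modulithm3}, we invoke the existence criterion of Alper--Halpern-Leistner--Heinloth for good moduli spaces in characteristic $0$. The stack $\cM_\sigma(v,\phi)$ is of finite type with affine diagonal, since automorphism groups of objects are units of finite-dimensional algebras, and it is universally closed. What remains is to check $\Theta$-reductivity and S-completeness. Both reduce to extension statements over $[\mathbb A^1/\bG_m]$ and over $\overline{\mathrm{ST}}_R$ for DVRs $R$. We verify them using the fact that the semistable objects of fixed phase form an abelian subcategory of finite length; this is the semistable-reduction-type argument of \cite[\S 21]{BLMNPS:families}. The criterion then gives a separated good moduli space, and universal closedness of the stack implies that $\rM_\sigma(v,\phi)$ is proper over $S$.
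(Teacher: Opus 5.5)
Your treatment of parts (1) and (3) follows the paper's route. Finite type comes from openness plus boundedness inside the locally finitely presented stack $\cM_{\gl}(\cC/S)$, and simplicity of geometrically stable objects gives the $\bG_m$-gerbe. For (3) you use the Alper--Halpern-Leistner--Heinloth existence criterion; the paper simply cites Theorem 7.25 of \cite{AlperHLHein}, which packages the $\Theta$-reductivity and S-completeness checks you sketch.

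The gap is in the universal closedness step of (2). You use Condition~(1) of Definition~\ref{definition-stability-families} only to get a heart on $\cC_R$. You then try to prove semistable reduction yourself by Langton-type elementary modifications, claiming they terminate because the support property and discreteness of $Z$ on $\Lambda$ force a decreasing invariant to stabilize. That does not give termination. Those facts only show that the invariant (phase and class of the maximal destabilizing subobject $B_n$ of the special fiber) eventually becomes constant. That stationary regime is exactly where Langton's argument does its real work. To rule out an infinite chain of modifications with constant $B_n$, one must:
\begin{enumerate}
\item assemble the successive quotients into a compatible system over the rings $R/t^n$;
\item algebraize this system over the completion $\hat R$ to a quotient of $E_{\hat K}$ of strictly smaller phase;
\item contradict semistability of the generic fiber, after also checking that semistability survives the extension $K \subset \hat K$.
\end{enumerate}
Neither the support property nor discreteness of $Z$ gives such an algebraization statement for an abstract smooth and proper $\cC$, so the step fails as written.

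In the paper this step is not re-proved by elementary modifications. Condition~(1) says that, after base change to a DVR $R$ essentially of finite type over $S$, the fiberwise stability conditions integrate to a Harder--Narasimhan structure over $R$. This is a relative HN structure, i.e.\ HN data over $R$, not merely an integrated heart. That structure, applied to a lift of $E_K$ supplied by the lifting property over curves, is what yields the semistable extension, i.e.\ the strong existence part of the valuative criterion. The Nagata hypothesis then gives universal closedness. You should replace your third bullet by an appeal to the full content of Condition~(1). The rest of your argument for (2) is fine: separatedness via nonzero maps between stable objects of the same phase, and descent of universal closedness along the gerbe.
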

\begin{proof}[Idea of proof]
The fact that $\cM_{\sigma}(v, \phi)$ is an algebraic stack of finite type over $S$ follows from the fact that it is an open and bounded substack of $\cM_{\gl}(\cC/S)$ which is an algebraic stack locally of finite presentation over $S$. The same argument applies to $\cM_{\sigma}^{\st}(v, \phi)$ which is made of simple objects, and thus has the claimed $\bG_m$-gerbe structure (see \cite[Corollary 4.3.3]{Lieblich:mother-of-all}).   

Next, Condition~\eqref{definition-stability-families_1} in Definition~\ref{definition-stability-families} allows one to show that $\cM_{\sigma}(v, \phi) \to S$ satisfies the strong existence part of the valuative criterion for any DVR essentially of finite type over S. Since $S$ is Nagata, this implies that $\cM_{\sigma}(v, \phi) \to S$ is universally closed. The existence of a proper good moduli space can be proved using \cite[Theorem 7.25]{AlperHLHein}.
\end{proof}

\begin{Rem}
In the next section, we often simplify the notation for the good moduli space to $\rM_\sigma(v)$ omitting the phase.     
\end{Rem}

\section{Results and open problems} \label{sec:resandopen}

In this section we review some recent developments about the existence of stability conditions on smooth and proper $S$-linear categories and structure results on their moduli spaces of semistable objects, together with applications in hyperk\"ahler geometry and open questions.

\subsection{Stability conditions on $\Dperf(X)$} \label{sec:stabonDX}

One of the first foundational questions in the theory of stability conditions concerns their existence, even in the weaker formulation of Definition~\ref{def:stabcond_Bridgeland}. As seen in Examples~\ref{ex:slopestab} and \ref{ex:tiltstab}, this problem is solved for the bounded derived category of coherent sheaves on smooth projective curves and surfaces, but if $X$ has higher dimension this is more subtle. 

\subsubsection{Tilting method}
The first strategy to attack this problem is due to Bayer, Macrì and Toda \cite{BMT:3folds-BG}. We quickly explain the idea, referring to the beautiful survey \cite{MS:lectures} for details and applications. Assume that $X$ has dimension three, and consider the tilt stability $\sigma_{s,q}$. Tilting the heart $\Coh^s(X)$ with respect to $\sigma_{s,q}$, in a similar way as done for surfaces (see Example~\ref{ex:tiltstab}), produces a new heart $\cA^{s,q}$. It is possible to build a candidate central charge involving also the third Chern character of objects, and checking that it produces a stability function on $\cA^{s,q}$ relies on proving that semistable objects satisfy a quadratic inequality involving their Chern character. This has been successfully achieved in the following cases:
\begin{itemize}
    \item Fano threefolds \cite{BMT:3folds-BG, BMSZ:Fano3folds, Chunyi:stability-Fano-threefolds, Macri:P3, Benjamin:quadric},
    \item abelian threefolds \cite{BMS:stabCY3s, Dulip-Antony:I, Dulip-Antony:II}, and some resolutions of finite quotients of abelian threefolds \cite{BMS:stabCY3s},
    \item threefolds with nef tangent bundle \cite{Koseki_nef, BMSZ:Fano3folds}
    \item quintic threefolds \cite{CLi:quintic},
    \item some three-dimensional weighted hypersurfaces in weighted projective spaces \cite{Koseki_triple}, 
    \item three-dimensional complete intersections of quartic and quadric hypersurfaces in $\bP^5$ \cite{Shengxuan}.
\end{itemize}
A more general conjectural version of the Bayer-Macrì-Toda inequality has been stated in \cite[Conjecture 4.7]{BM:effective}, motivated by the results in \cite[Theorem 1.1]{BMSZ:Fano3folds}. Note also that the stability conditions constructed with this tilting procedure define stability conditions in the sense of Definition~\ref{definition-stability-families}, even in the relative setting of flat families of smooth projective varieties by \cite[Theorem 1.3]{BLMNPS:families}.

If $X$ is a polarized Calabi--Yau threefold, an important progress has been achieved in \cite{Soheylaetall}, where the authors reduce the problem of checking the conjectural inequality to the condition that curves in $X$ satisfy a Brill--Noether-type inequality. More precisely, if $C \subset X$ is a smooth projective curve of arithmetic genus $g$, they define the \emph{Brill–Noether number} of $C$ as
\[
\text{BN}_C=\lim_{t \to 0} \sup \left\{ \frac{\dim\rH^0(E)}{\mathrm{rk}(E)} \;\middle|\;
\begin{array}{l}
E \text{ is a stable sheaf on } C, \\
\frac{\deg(E)}{\mathrm{rk}(E)} \in (g-1-t,\, g-1+t)
\end{array}
\right\}
\]
\begin{Thm}[\cite{Soheylaetall}, Theorem 1.3]
\label{thm:soheyla}
Let $X$ be a Calabi--Yau threefold with ample class $H$. Suppose that there exists a smooth surface $S \in |H|$ and a smooth curve $C \in |H|_S|$ such that $\emph{BN}_C < \chi(\cO_X(H))$. Then the generalized Bayer-Macrì-Toda inequality holds. In particular, there exists a family of geometric stability conditions on $X$.
\end{Thm}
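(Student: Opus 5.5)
The strategy is to run the Bayer--Macrì--Toda tilting argument of \S\ref{sec:stabonDX} and reduce the generalized inequality for tilt-semistable objects on $X$ to a Brill--Noether statement on the curve $C$, by restricting twice: first from $X$ to $S$, then from $S$ to $C$.

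First, I would reformulate the generalized BMT inequality as the statement that every $\sigma_{s,q}$-tilt-semistable object $E$ on $X$ satisfies a bound on $\ch_3(E)$ in terms of $\ch_{\leq 2}(E)$. A standard reduction (as in \cite{BMS:stabCY3s}, \cite{CLi:quintic}) cuts this down to objects that are $\nu_{s,q}$-stable along the limiting vertical wall, with $\ch_1^{sH}$ small relative to the rank. For these objects the required bound on $\ch_3$ is equivalent, by Riemann--Roch on the Calabi--Yau threefold $X$ (where $\mathrm{td}_X = 1 + c_2(X)/12$), to a bound on $\hom(\cO_X(aH), E)$ for suitable twists $a$. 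Serre duality $S_X=[3]$ then removes the $\Ext^2$ and $\Ext^3$ terms using stability and vanishing.

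Second, I would bound these homs by restriction. Using the triangles $E(-S)\to E\to E|_S$ and the analogous ones on $S$ with $C\in|H|_S|$, I would control $\hom(\cO,E)$ by $h^0$ of the restrictions. The key input is a restriction theorem in the style of Feyzbakhsh: the derived restriction $E|_C$ of a tilt-stable object at the relevant parameter has HN factors, as sheaves on $C$, of slopes close to $g-1$, with $g$ the genus of $C$. The definition of $\text{BN}_C$ then gives $h^0(E|_C)\lesssim \text{BN}_C\cdot \rk(E)$. Lifting back through $S$ and $X$ yields $\hom(\cO_X,E)\le \text{BN}_C\cdot\rk(E)+(\text{error})$. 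The strict inequality $\text{BN}_C<\chi(\cO_X(H))$ leaves exactly the room needed to absorb this error and compare with the $\chi$-term coming from Riemann--Roch, which gives the $\ch_3$ bound.

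Finally, once the inequality holds, the construction in \cite{BMT:3folds-BG} produces the double-tilted heart $\cA^{s,q}$ and a central charge. This gives a family of stability conditions, including the support property, by \cite{BMS:stabCY3s}.

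The main obstacle is the restriction step: showing that tilt-stable objects restrict to $C$ with HN slopes confined to $(g-1-t,\,g-1+t)$, and that $t\to0$ can be arranged. I would attack it by a wall-crossing induction on $\ch_1^{sH}$ (or on the discriminant) along the vertical wall. Destabilizing subobjects would be handled inductively, and the error terms would be absorbed using the strict inequality.
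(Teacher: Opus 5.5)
You have the right overall shape. The survey gives no proof; it cites \cite{Soheylaetall} and summarizes the result as a reduction of the BMT-type inequality to a Brill--Noether inequality for curves. Your outline fits that summary: restrict along $X\supset S\supset C$ and let Riemann--Roch with $\mathrm{td}_X=1+c_2(X)/12$ produce $\chi(\cO_X(H))$. But there is a genuine gap, and it is quantitative, not only the restriction step you flag.

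First, $\text{BN}_C\ge\chi(\cO_X(H))-2$ always. Indeed $g(C)-1=H^3$, and $\cO_C(H)$ is a line bundle of degree exactly $g-1$ with
$h^0(\cO_C(H))=h^0(\cO_S(H))-1=h^0(\cO_X(H))-2=\chi(\cO_X(H))-2$,
using Kodaira vanishing and $h^1(\cO_X)=h^1(\cO_S)=0$. So the hypothesis leaves room of less than $2$ per unit rank.

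Second, in your configuration put $F=E(-H)$, with $\ch_1(F)$ and $H\ch_2(F)$ small relative to the rank. Riemann--Roch gives $\chi(\cO_X,E)\approx\ch_3(F)+\rk(E)\,\chi(\cO_X(H))$. The target $\ch_3(F)\lesssim0$ has no slack, since line bundles attain equality in the BMT inequality. Your chain is
\[
\hom(\cO_X,E)\le\hom(\cO_X,F)+h^0(S,F|_S)+h^0(C,E|_C).
\]
Its first two terms are not errors. For $E=\cO_X(H)$ the chain reads $\chi(\cO_X(H))=1+1+(\chi(\cO_X(H))-2)$, and for objects close to $\cO_X(H)^{\oplus r}$ both terms are $\approx r$. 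Bounding the last term by $\rk(E)\,\text{BN}_C$ therefore only gives
$\ch_3(F)\lesssim\rk(E)\bigl(\text{BN}_C+2-\chi(\cO_X(H))\bigr)$,
which is $\ge0$ by the first point. This bookkeeping would need $\text{BN}_C\le\chi(\cO_X(H))-2$, the smallest possible value, not $\text{BN}_C<\chi(\cO_X(H))$. So strictness cannot absorb these terms. You need an argument that controls them exactly, for instance splitting off $\cO_X(H)$-type factors by wall-crossing before restricting, so that the remaining kernels have no sections.

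For comparison, the hypothesis has the right size when a single restriction step has no kernel contribution. Take a stable sheaf $F$ of rank $\ge2$ on $S$ with $\mu(F)=\mu(K_S)$. Then $F(-C)$ is stable of slope $0$ and $\Hom(F,K_S)=0$, so $\chi(F)\le h^0(F)\le h^0(C,F|_C)$. Since $\chi(\cO_S)=\chi(\cO_X(H))$, granting control of $F|_C$, the hypothesis yields $\chi(F)<\rk(F)\,\chi(\cO_S)$, with $K_S$ itself on the boundary.

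Beyond this, the two steps you call standard carry the real content. On a Calabi--Yau threefold there are no multiplication maps as in \cite{BMS:stabCY3s}. So reducing to objects near integral $\beta$ with small $\ch_1^{\beta}/\rk$ is not formal; in \cite{CLi:quintic} it goes through a stronger Bogomolov--Gieseker inequality, which you would have to derive from the hypothesis. The restriction claim also fails in general: for a fixed curve $C\in|H|_S|$, the HN factors of $E|_C$ need not lie in $(g-1-t,g-1+t)$. Effective restriction theorems require the discriminant to be small relative to the curve class. So this statement has to be proved together with the reduction, and that is exactly the step you leave open.
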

The condition in the above thereom holds for many classes of CY threefolds, such as complete intersections in weighted projective spaces or anticanonical divisors in Fano fourfolds of index $\geq 3$ or index $2$ and Picard rank one, and provides a uniform method to treat the generalized Bayer-Macrì-Toda inequality in this context. It would be interesting to apply Theorem~\ref{thm:soheyla} to other collections of CY threefolds. We mention the following open problem.

\begin{Ques}[\cite{Soheylaetall}, Conjecture 5.9]
Let $X$ be a Calabi--Yau threefold with very ample class $H$ and $\Pic(X)=\bZ H$. Do there exist a  a smooth surface $S \in |H|$ and a smooth curve $C \in |H|_S|$ such that $\emph{BN}_C < \chi(\cO_X(H))$?    
\end{Ques}

\subsubsection{Products with curves}
In the special case of product varieties over a field a new construction is due to Y.\ Liu in \cite{Liu}. This method produces stability conditions on product varieties with curves. More precisely, let $X$ and $S$ be smooth projective varieties over a field $k$.  Assume that $\sigma=(\tau, Z)$ is a stability condition as in Definition~\ref{def:stabcond_Bridgeland} on $\Dperf(X)=\Db(X)$ with respect to $v \colon \rK_0(X) \to \Lambda$, such that $\tau$ has noetherian heart $\cA$. The starting point is the construction of a bounded t-structure on $\Db(X \times S)$ by base change whose noetherian heart is  
$$\cA_S=\lbrace E \in \Db(X \times S): p_*(E \otimes q^*(\cO_S(n))) \in \cA \text{ for every } n \gg 0 \rbrace,$$
where $p$ and $q$ are the projections from $X \times S$ to $X$ and $S$, respectively, and $\cO_S(1)$ is a fixed ample line bundle on $S$. If $S$ is a curve, then $Z(p_*(E \otimes q^*(\cO_S(n))))$ is a linear polynomial, thus can be written as
$$Z(p_*(E \otimes q^*(\cO_S(n))))=a(E)n+b(E)+\sqrt{-1}(c(E)n+d(E))$$
for some group morphisms $a, b, c, d \colon \rK_0(X \times S) \to \bR$. Now by \cite[Lemma 4.1]{Liu}, for every positive $t \in \bR$,  the function
$$Z_t(E)=a(E)t -d(E)+\sqrt{-1}c(E)t$$
defines a \emph{weak stability function} on $\cA_S$ (meaning that the first condition in Definition~\ref{def:stabcond_Bridgeland} is relaxed to $\Im(Z_t(E))= 0$ implies $\Re(Z_t(E))\leq0$). Tilting $\cA_S$ with respect to $Z_t$, one gets a new heart $$\cA_S^t=\langle \cT^t, \cF^t[1] \rangle,$$ where $\cT^t$, resp.\ $\cF^t$, is generated by $Z_t$-semistable objects in $\cA_S$ with slope $>0$, resp.\ $\leq 0$. As shown in \cite[Proposition 4.6]{Liu}, for every $s, t \in \bR_{>0}$ the function
$$Z_S^{s,t}(E) =c(E)s+b(E)+\sqrt{-1}(-a(E)t+d(E))$$
defines a stability function on $\cA_S^t$. In fact, the following result holds.

\begin{Thm}[\cite{Liu}, Theorem 5.9] \label{thm:Liu}
Let $X$ be a smooth projective variety over an algebraically closed field $k$, and $S$ be a smooth projective curve over $k$. Let $\sigma$ be a stability condition on $\Db(X)$ whose heart is noetherian. 
For every $s, t \in \bR_{>0}$ the pair $\sigma_{s,t}=(\cA_S^t, Z_S^{s,t})$ defines a stability condition on $\Db(X \times S)$ with respect to $v'=(v_1, v_2) \colon \rK_0(X \times S) \to \Lambda'$, where $\Lambda'=\Lambda \oplus \Lambda/\ker(Z)$, and  
\begin{align*}
v_1(E)&=v(p_*(E \otimes q_*\cO_S(n))) -v(p_*(E \otimes q_*\cO_S(n-1))), \\   
v_2(E)&= v(p_*(E \otimes q_*\cO_S(n))) - nv_1(E).
\end{align*}
\end{Thm}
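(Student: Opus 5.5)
We verify the three conditions of Definition~\ref{def:stabcond_Bridgeland} for $\sigma_{s,t}$: that $\cA_S^t$ is the heart of a bounded t-structure on which $Z_S^{s,t}$ is a stability function factoring through $v'$, the Harder--Narasimhan property, and the support property.

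\emph{Heart and factorization.} First we show that $\cA_S$ is the heart of a bounded, noetherian t-structure on $\Db(X\times S)$. Since $S$ is a curve with ample $\cO_S(1)$, we would use a relative Serre-vanishing argument: for $E\in\Db(X\times S)$, the object $p_*(E\otimes q^*\cO_S(n))$ for $n\gg0$ controls $E$, and $\cA$ being noetherian lets us glue the t-structure from $\cA$ as in Abramovich--Polishchuk's constant t-structures. Next, Riemann--Roch on $S$ shows that $n\mapsto v(p_*(E\otimes q^*\cO_S(n)))$ is linear in $n$. Hence $v_1,v_2$ are well defined and independent of $n$, and $Z(p_*(E(n)))=Z(v_1(E))n+Z(v_2(E))$. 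From this, $a,b,c,d$ are $\Re Z\circ v_1$, $\Re Z\circ v_2$, $\Im Z\circ v_1$ and $\Im Z\circ v_2$ respectively. Since $a$ and $c$ only see $v_1$ modulo $\ker Z$, $Z_S^{s,t}$ factors through $\Lambda'$.

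\emph{Stability function and HN property.} Granting \cite[Lemma 4.1]{Liu}, $Z_t$ is a weak stability function on $\cA_S$. Noetherianity of $\cA_S$ together with discreteness of the image of $\Im Z_t$ (which we would arrange, or avoid by a limiting argument) gives HN filtrations, so the tilt $\cA_S^t$ is a heart. Positivity of $Z_S^{s,t}$ on $\cA_S^t$ is then \cite[Proposition 4.6]{Liu}:
\begin{itemize}
\item $\Im Z_S^{s,t}=-\Re Z_t$ on $\cT^t$ and its shift on $\cF^t[1]$, which gives the sign conditions;
\item in the boundary cases ($Z_t$-slope $0$ or $+\infty$), the real part is controlled by $c(E)s+b(E)$, using that objects with $c=0$ behave like families of objects of $\cA$ of phase one.
\end{itemize}
For the HN property on $\cA_S^t$ we would use the standard criterion: $\cA_S^t$ is noetherian (a tilt of a noetherian heart at a torsion pair coming from a weak stability function is noetherian, via Polishchuk's argument) and there are no infinite chains of the dual type, checked using the discreteness of $\Im Z_S^{s,t}$ on $\Lambda'$.

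\emph{Support property (main obstacle).} This step is where I expect the main difficulty. My plan is to construct $Q'$ on $\Lambda'\otimes\bR$ from the support form $Q$ of $\sigma$. Roughly, the plan is as follows.
\begin{itemize}
\item Take $Q'(w_1,w_2)=Q(w_2)$ plus a correction term in $w_1$, and show $Q'$ is negative definite on $\ker Z_S^{s,t}$.
\item Prove $Q'\geq0$ on $\sigma_{s,t}$-semistable objects by restricting to fibers $X\times\{x\}$: for semistable $E$, a general fiber restriction should be $\sigma$-semistable up to HN factors of bounded phase spread, via a Bogomolov-type restriction theorem in the style of Langer for curve directions.
\item Combine the fiberwise inequality $Q\geq0$ with a convexity argument along the $(s,t)$ wall-and-chamber structure.
\end{itemize}
If a direct construction fails, I would deform from the large volume limit $s,t\to\infty$, where semistable objects are understood, and use Bridgeland's deformation theorem with the quadratic form preserved across walls.
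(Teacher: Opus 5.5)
Your treatment of the heart $\cA_S$, the linearity in $n$, and the positivity of $Z_t$ and $Z_S^{s,t}$ (citing Liu's Lemma~4.1 and Proposition~4.6) matches the construction the survey records before deferring to Liu's Theorem~5.9. The gap is in the two properties that make up the actual content of that theorem.

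\textbf{Support property.} $Z_S^{s,t}$ only sees the two real numbers $s\,\Im Z(v_1)+\Re Z(v_2)$ and $\Im Z(v_2)-t\,\Re Z(v_1)$. So any valid quadratic form must encode an inequality on semistable objects that \emph{couples} the fibre class $v_1$ with the constant term $v_2$. Restricting to fibres $X\times\{x\}$ cannot produce this. The fibres only know $v_1$, since $v_1(E)$ is the sum of the classes of $E|_{X\times\{x\}}$ for $x\in D\in|\cO_S(1)|$. Moreover, a Langer-type restriction theorem is not available for an abstract noetherian heart.

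On $\ker Z_S^{s,t}$ one has $Z(w_2)=-s\,\Im Z(w_1)+\sqrt{-1}\,t\,\Re Z(w_1)$, hence $\Im\bigl(\overline{Z(w_1)}Z(w_2)\bigr)=t(\Re Z(w_1))^2+s(\Im Z(w_1))^2$ there. This suggests a form like $\lambda Q(w_1)-\Im\bigl(\overline{Z(w_1)}Z(w_2)\bigr)$, with $Q$ applied to the fibre class. That is consistent with the statement, where $v_1$ is the summand kept in $\Lambda$. You have swapped the roles: in the statement $v_2$ only lives in $\Lambda/\ker Z$, where $Q(w_2)$ is not even defined.

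Nonnegativity of such a form on semistable objects is a genuine Bogomolov-type theorem. Already for $X$ a curve with slope stability, $-\Im\bigl(\overline{Z(v_1)}Z(v_2)\bigr)$ is $\deg\cO_S(1)$ times $(\ch_1\cdot f_1)(\ch_1\cdot f_2)-\rk\cdot\ch_2$ on $X\times S$, where $f_1=[X\times\mathrm{pt}]$ and $f_2=[\mathrm{pt}\times S]$. Its nonnegativity is what Bogomolov--Gieseker plus Hodge index give. Your convexity argument across walls can only propagate such an inequality once it holds somewhere. Likewise, the fallback via Bridgeland's deformation theorem presupposes the support property at the starting point. The proposal never supplies this core input.

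\textbf{Harder--Narasimhan property.} Both of your HN arguments rest on discreteness of $\Im Z_t=t\,\Im Z(v_1)$ and of $\Im Z_S^{s,t}=\Im Z(v_2)-t\,\Re Z(v_1)$. This fails for irrational $t$, and already for $\sigma$ itself: take $\Db(\bP^1)$ with the finite-length Kronecker heart and simples sent to $-1+\sqrt{-1}$ and $-1+\sqrt{-2}$, which is noetherian but has dense $\Im Z$. In addition, a tilt of a noetherian heart is not noetherian in general. Already $\Coh$ of an elliptic curve tilted at an irrational slope has infinite chains of proper surjections $G_1[1]\twoheadrightarrow G_2[1]\twoheadrightarrow\cdots$, built from Farey approximants. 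So ``Polishchuk's argument'' needs hypotheses you do not have. Since the theorem is asserted for all $s,t\in\bR_{>0}$ and every $\sigma$ with noetherian heart, you need one of two things. Either an HN argument that avoids discreteness, or a deformation argument, which itself requires the support property first and an identification of the deformed heart with $\cA_S^t$. Neither is provided.
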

As an important consequence, this construction produces stability conditions on arbitrary products of curves $X=C_1 \times \dots \times C_m$ using slope stability on $C_i$. 

Note that $\sigma_{s,t}$ is a stability condition in the weaker sense of Definition~\ref{def:stabcond_Bridgeland}. A natural question is whether these stability conditions have moduli spaces with nice geometric properties.

\begin{Ques} \label{quest:stabcondproduct_proper}
Do the stability conditions in Theorem~\ref{thm:Liu} satisfy the conditions in Definition~\ref{definition-stability-families}?  
\end{Ques}
    
A recent application of Theorem~\ref{thm:Liu} is the construction of stability conditions on Hilbert schemes of points on product surfaces.

\begin{Thm} [\cite{LMPSZ}, Theorem 1.3] \label{thm:LMPSZ}
Let $X$ be a smooth projective surface over an algebraically closed field of characteristic $0$ which is either the product $C_1 \times C_2$ of smooth projective curves of positive genus, or the Kummer K3 surface associated to the abelian surface $E_1 \times E_2$ where $E_1$ and $E_2$ are elliptic curves. Then there exist stability conditions on $\Db(\Hilb^n(X))$. 
\end{Thm}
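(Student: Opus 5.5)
The approach I would take is the one the survey has been preparing: use the Bridgeland--King--Reid--Haiman derived McKay correspondence to replace $\Db(\Hilb^n(X))$ by an equivariant category on a product. Concretely, for a smooth quasi-projective surface $X$ there is an equivalence $\Db(\Hilb^n(X)) \simeq \Db([X^n/\mathfrak{S}_n]) = \Db(X^n)^{\mathfrak{S}_n}$. So it suffices to construct stability conditions on $\Db(X^n)$ that are invariant under the symmetric group action. Then I would invoke the Macrì--Mehrotra--Stellari / Polishchuk induction principle: a $G$-invariant stability condition on $\cD$ induces a stability condition on $\cD^G$. Its heart consists of equivariant objects whose underlying object lies in the original heart, and its central charge is obtained by composing with the forgetful functor.

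To build the invariant stability conditions, I treat the two cases separately. For $X = C_1 \times C_2$, the power $X^n$ is the product of curves $C_1^n \times C_2^n$. I would iterate Theorem~\ref{thm:Liu}, starting from slope stability on a curve (Example~\ref{ex:slopestab}, whose heart $\Coh$ is noetherian). At each step I need the new heart $\cA_S^t$ to be noetherian again in order to continue the induction, and I would cite or reprove this from Liu's tilting construction. The crucial additional requirement is $\mathfrak{S}_n$-invariance. The iterated construction depends on an ordering of the factors, so it is not obviously invariant under permutations. My plan is to choose all parameters symmetrically: identical $(s,t)$ on each $C_1$-factor and on each $C_2$-factor. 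I would then show that the resulting heart and central charge on $(C_1\times C_2)^n$ depend only on symmetric data. One way to do this is to characterize the heart intrinsically, for instance via pushforward conditions $p_*(E\otimes q^*\cO(m))$ along all coordinate projections, or via a limit description. If exact invariance of the ordered construction fails, the fallback is to deform within the space of stability conditions: average over the group action inside a connected component where $\mathfrak{S}_n$ acts, and use a fixed-point argument on $\Stab$, where the $\mathfrak{S}_n$-orbit lies in a contractible or convex region. This would produce an invariant stability condition.

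For the Kummer case, I would use Example~\ref{ex:ncas}: $\Db(X) \simeq \Db(E_1\times E_2)^{\bZ/2}$. Hence $\Db(\Hilb^n(X)) \simeq \Db((E_1\times E_2)^n)^{(\bZ/2)^n \rtimes \mathfrak{S}_n}$, by composing the BKR equivalence with the equivariant description of $X^n$. On the product of elliptic curves $(E_1\times E_2)^n$, the same iterated Liu construction applies. Invariance under each $-1$ follows because slope stability on $E_i$ is preserved by $[-1]$, and $[-1]$ is compatible with the projections used in Liu's construction. The main obstacle I expect is exactly the group invariance of the iterated Liu stability conditions under the wreath product, together with verifying the support property for the induced condition on the invariant category. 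For the support property, I would pull back the quadratic form, which is possible because the forgetful functor is faithful on classes up to the group action.
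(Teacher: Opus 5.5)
Your reduction is the same as the paper's. The BKR equivalence $\Db(\Hilb^n(X))\simeq\Db(X^n)^{\mathfrak{S}_n}$ together with \cite{MMS:inducing} reduces the theorem to producing a stability condition on $X^n$ that is \emph{fixed} by $\mathfrak{S}_n$. Your reformulation of the Kummer case, via $(\bZ/2)^n\rtimes\mathfrak{S}_n$ acting on $(E_1\times E_2)^n$, is a sensible way to organize the ``similar argument''.

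The gap is that this fixed stability condition, which is the actual content of \cite[Theorem 4.5]{LMPSZ}, is never produced. Your main plan, iterating Theorem~\ref{thm:Liu} with symmetric parameters, is precisely what the paper says does not work directly. The construction is intrinsically ordered. At each stage $\cA_S$ is the base change of the previous heart along $p$, and $Z_t$, $Z_S^{s,t}$ are read off from the polynomial $n\mapsto Z(p_*(E\otimes q^*\cO_S(n)))$, i.e.\ from twisting along the newly added curve only. So a permutation moving that curve sends the heart and central charge to the ones built from a different ordering. Already for $C\times C$ with $g(C)\geq 2$, starting from Example~\ref{ex:slopestab}, Grothendieck--Riemann--Roch gives $b(E)=-\ch_2(E)+(g-1)\,c_1(E)\cdot[C\times\mathrm{pt}]$. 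The Todd class of the second factor contributes a term with no counterpart for the first, so $Z_S^{s,t}$ is not swap-invariant for any $s,t$. Removing such asymmetries requires modifying the construction. Even with a symmetric central charge, you would still have to show that the iterated tilted heart is invariant, and your sketch (``characterize the heart intrinsically'') leaves this open.

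The fallback does not close the gap either. $\Stab$ has no convex structure in which to average. Averaging $Z$ over $\mathfrak{S}_n$ gives an invariant central charge, but lifting it to an invariant stability condition requires one of two things. Either the whole orbit lies in a region mapped injectively by $\Stab\to\Hom(\Lambda,\bC)$ onto an $\mathfrak{S}_n$-stable convex set, or you have a uniqueness theorem saying the relevant stability conditions on $X^n$ are determined by their central charges. Your argument supplies neither. Contractibility of components of $\Stab(X^n)$ is not known, and you would also need the action to preserve the component of $\sigma$. In any case, a finite group acting on a contractible space need not have a fixed point.

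Note also that your argument never uses that the curves have positive genus, or that the Kummer surface comes from elliptic curves. This suggests the invariance step needs genuine input from the geometry of $X^n$. Compare the sketch of Theorem~\ref{thm:chunyili}: there the $(\bZ/2\bZ)^n\rtimes\mathfrak{S}_n$-invariance of a Liu-type stability condition on $E^n$ is obtained by combining \cite{FLZ:finite_albanese} (varieties with finite Albanese morphism) with \cite{LMPSZ}.
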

\begin{proof}[Idea of proof]
Consider the case $X=C_1 \times C_2$. By Theorem~\ref{thm:Liu} there are stability conditions on the $n$-fold product $X^n=(C_1 \times C_2)^n$. On the other hand, by \cite{BKR} there is an equivalence
$$\Db(\Hilb^n(X)) \simeq \Db([X^n/ \mathfrak{S}_n])= \Db(X^n)^{\mathfrak{S}_n}$$
where the right-hand-side is given by the stacky quotient of $X^n$ by the action of $\mathfrak{S}_n$ by permutations. By \cite{MMS:inducing} a stability condition on $\Db(X^n)$ induces a stability condition on the invariant category $\Db(X^n)^{\mathfrak{S}_n}$ if it is fixed by the action of $\mathfrak{S}_n$. The stability conditions from Theorem~\ref{thm:Liu} do not work directly, but expanding the construction in Theorem~\ref{thm:Liu}, the authors show in \cite[Theorem 4.5]{LMPSZ} the existence of $\mathfrak{S}_n$-fixed stability conditions on $X^n$ as desired. A similar argument applies to the Kummer surface of a product of elliptic curves.
\end{proof}

Theorem~\ref{thm:LMPSZ} is the first ingredient for proving the existence of stability conditions in a more general setting. Indeed, in the work in preparation \cite{MPS1} the authors prove foundational results  on deformations of t-structures, which combined with the above result lead to the following statement.

\begin{Thm}[\cite{LMPSZ2}] \label{thm:LMPSZ_2}
Let $X$ be a very general polarized hyperk\"ahler manifold of $\rK3$-type or Kummer type of dimension $2n$. Then there exist stability conditions on $\Db(X)$.  
\end{Thm}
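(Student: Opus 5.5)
The strategy is deformation: Theorem~\ref{thm:LMPSZ} gives a special point in the moduli of polarized hyperk\"ahler manifolds of the relevant type where $\Db(X)$ already carries stability conditions, and we propagate them to the very general member of the family. For K3-type, the Hilbert scheme $\Hilb^n(T)$ of the Kummer K3 surface $T$ of $E_1\times E_2$ is a hyperk\"ahler manifold of K3$^{[n]}$-type. For Kummer type, the analogous special point is the generalized Kummer variety of $E_1\times E_2$, obtained as a fiber of the Albanese-type map on $\Hilb^{n+1}(E_1\times E_2)$. There, stability conditions come from $\mathfrak{S}_{n+1}$-invariant stability conditions on products of curves, via the same Liu-plus-\cite{MMS:inducing} argument of Theorem~\ref{thm:LMPSZ}, restricted to the sum-zero locus. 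The first step is to check that these special points can be taken to lie in any given connected component of the moduli of polarized hyperk\"ahler manifolds of dimension $2n$ with a fixed polarization type. Since $E_1\times E_2$ has Picard rank at least $2$, some extra care is needed to match an arbitrary polarization class, possibly by varying the elliptic curves and the ample class.

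Next, take a smooth projective family $\mathcal X\to S$ over a connected base, with a special fiber $X_0$ as above and a very general fiber $X$. Stability conditions on $\Db(X_0)$ should be arranged as in Definition~\ref{definition-stability-families}, or at least with a noetherian heart and the support property. The central charge should factor through classes that deform over the family, namely Mukai vectors in the part of cohomology that stays algebraic. This is essentially the lattice of the polarization together with the classes of $\Lambda$ generated by $c_2$ and the algebraic classes of the very general member. The support property should be arranged with respect to this smaller lattice.

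The core step, and the main obstacle, is deforming the t-structure. We use the foundational results of \cite{MPS1} on deformations of t-structures: a bounded t-structure on the central fiber, suitably compatible with the family (e.g.\ with a noetherian heart, satisfying a form of the openness of the heart), extends to an $S$-linear t-structure on $\Db(\mathcal X)$ over a neighbourhood, or after base change over the formal/étale neighbourhood of $0$. It then restricts to the generic fiber, and hence by specialization and the Baire-type argument to the very general fiber. One must then verify that the restricted heart still admits Harder--Narasimhan filtrations and the support property for $Z$ restricted to the surviving lattice. I would try to get this by openness of stability in families (Definition~\ref{definition-stability-families}\eqref{definition-stability-families_2}) together with the fact that $Z$ only sees deformation-invariant classes. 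The main difficulty is that the original heart involves classes which stop being algebraic, so the central charge must be perturbed within the deformation-invariant sublattice before deforming. This perturbation should be carried out using the local homeomorphism of stability manifolds.
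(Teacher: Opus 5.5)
Your outline matches what the survey indicates: special points from Theorem~\ref{thm:LMPSZ}, then the deformation of t-structures from \cite{MPS1}. The gap is in the step that turns the deformed heart into a stability condition on the very general fibre.

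\textbf{Harder--Narasimhan filtrations and the support property.} You plan to get these from universal openness of stability, condition~\eqref{definition-stability-families_2} of Definition~\ref{definition-stability-families}. The survey says it is open whether the stability conditions of Theorem~\ref{thm:Liu}, and likewise those of Theorems~\ref{thm:LMPSZ} and~\ref{thm:LMPSZ_2}, satisfy the conditions of Definition~\ref{definition-stability-families}. This is Question~\ref{quest:stabcondproduct_proper} and the remark following Theorem~\ref{thm:LMPSZ_2}. So on $X_0$ you only have a stability condition in the sense of Definition~\ref{def:stabcond_Bridgeland}, with noetherian heart. You have no openness of stability, no integration of HN structures over curves, and no Langton-type closedness. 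Without these you cannot compare semistability on the general fibre with $\sigma_0$, nor obtain the inequality $Q\geq 0$ there. Your proposal gives no argument replacing these inputs.

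\textbf{The central-charge perturbation.} This also fails as stated. The local homeomorphism $\Stab\to\Hom(\Lambda,\bC)$ only moves $Z_0$ within a small neighbourhood. The central charges factoring through deformation-invariant classes form a linear subspace of positive codimension, which that neighbourhood need not meet. Nothing is known about the global image of $\Stab(\Db(X_0))$ for these Hilbert schemes.

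It is also unclear that any perturbation is needed. Via specialization, the classes of objects on the geometric generic fibre can be identified with a subgroup of $\Lambda$. Restricting $Z_0$ and $Q$ to that subgroup keeps $Q$ negative definite on the smaller kernel. What then remains is exactly the missing statement that semistable objects on the general fibre specialize to classes with $Q\geq 0$.

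\textbf{The Kummer-type special point.} With $A=E_1\times E_2$, your special point is not among the cases of Theorem~\ref{thm:LMPSZ} as quoted, and your justification does not work.
\begin{itemize}
\item Restricting stability conditions from $\Db(A^{n+1})$ to the subvariety $N_A=\ker(A^{n+1}\to A)$ is not a formal operation; compare the Restriction-$N$ property needed in the proof of Theorem~\ref{thm:chunyili}.
\item $N_A\cong E_1^n\times E_2^n$ is itself a product of curves, but for $n\geq 2$ the group $\mathfrak{S}_{n+1}$ acts on it through its standard representation, not by permuting curve factors. So the invariance argument behind Theorem~\ref{thm:LMPSZ}, already delicate for permutation actions, does not transfer.
\end{itemize}
The abelian-surface input actually available is $\Db(\Hilb^{n+1}(A))$. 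Passing from it to $K_n(A)$, for instance through the \'etale cover $A\times K_n(A)\to A^{[n+1]}$, requires an additional argument.

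Finally, you flag but do not carry out the matching of every polarization type with an ample (not merely movable) class on the special Hilbert schemes.
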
 

Answering Question~\ref{quest:stabcondproduct_proper} would be very useful to address the same problem for the stability conditions in Theorems~\ref{thm:LMPSZ}, \ref{thm:LMPSZ_2}.

\subsubsection{Update}

During the preparation of these notes, a groundbreaking progress in the construction of stability conditions has been obtained by Chunyi Li, who solved this problem in the case of smooth projective varieties in any dimension.

\begin{Thm}[\cite{Chunyi:remark}] \label{thm:chunyili}
Let $X$ be a smooth projective variety over $\bC$. Then there are stability conditions on $\Dperf(X)$.   
\end{Thm}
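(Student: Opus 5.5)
The plan is to reduce to products of curves, where Theorem~\ref{thm:Liu} applies, and then transport stability conditions back to $X$ along finite morphisms and finite group quotients. Let $n=\dim X$. Iterating Theorem~\ref{thm:Liu}, starting from slope stability on $\bP^1$ (Example~\ref{ex:slopestab}), gives stability conditions on $Y=(\bP^1)^n$ with noetherian hearts. The noetherian hypothesis must be checked at each step of the induction. Since the $\mathfrak S_n$-action on $Y$ permutes the factors, I would also use the symmetrization refinement of \cite[Theorem 4.5]{LMPSZ} to make these stability conditions $\mathfrak S_n$-invariant.

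Next I choose a finite surjective morphism $g\colon X\to\bP^n$ (a generic linear projection) and the quotient map $q\colon Y\to Y/\mathfrak S_n\cong\bP^n$. I form the fibre product $X'=X\times_{\bP^n}Y$. It carries an $\mathfrak S_n$-action, a finite equivariant map $f\colon X'\to Y$, and $X'/\mathfrak S_n\cong X$. To avoid singularities of $X'$, I would either choose $g$ generically enough that $X'$ is smooth, or replace $X'$ by an equivariant resolution. In the second case I would then need to remove the resulting extra semiorthogonal pieces.

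The key technical step is \emph{inducing a stability condition along a finite morphism} $f\colon X'\to Y$. Given $\sigma=(\cA,Z)$ on $\Db(Y)$, set
$$\cA_{X'}=\{E\in\Db(X'):f_*E\in\cA\},\qquad Z_{X'}=Z\circ v\circ f_*.$$
The heart is bounded and is the heart of a t-structure because $f_*$ is exact and conservative for finite $f$: $\Db(X')$ is the derived category of $f_*\cO_{X'}$-modules in $\Db(Y)$, and one lifts the t-structure to modules over the algebra object $f_*\cO_{X'}$. $Z_{X'}$ is then a stability function, since $Z(f_*E)=0$ forces $f_*E=0$ and hence $E=0$. Noetherianity of $\cA$ passes to $\cA_{X'}$. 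Given the HN property, I would then pass to the $\mathfrak S_n$-invariant part, so that \cite{MMS:inducing} yields stability conditions on $\Db([X'/\mathfrak S_n])$. I would compare this category with $\Db(X)$ by a derived McKay correspondence of the kind in \cite{BKR}, or exhibit $\Db(X)$ as a left-admissible component of it. Stability conditions would then restrict to $\Db(X)$, possibly after a further deformation argument.

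I expect the main obstacle to be the HN property together with the support property for $(\cA_{X'},Z_{X'})$. If $E$ is $Z_{X'}$-semistable, $f_*E$ need not be semistable on $Y$, so $Q\circ f_*$ need not be nonnegative on semistable objects. My first attempt would be to bound $Q(f_*E)$ below using the HN factors of $f_*E$. The idea is that these factors are quotients and subobjects of $f_*E$ carrying only a bounded spread of phases, controlled by the degree of $f$ through the action of $f_*\cO_{X'}$, which is filtered by vector bundles with bounded slopes. If that bound holds, I would shrink to a small enough open region of $(s,t)$ where a modified quadratic form works. Failing this, I would deform $Z_{X'}$ in the real direction while keeping the heart fixed. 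This is the kind of ``real'' perturbation I expect to be necessary to recover HN filtrations and the support property simultaneously.
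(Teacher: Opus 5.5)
Your outline has the right ingredients: Liu's construction on a product of curves, a finite map to $\bP^n$, and inducing stability conditions along finite morphisms. But the step you flag as the main obstacle is the whole content of the theorem, and it is left unresolved.

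\textbf{The support property.} For $E$ semistable in $\cA_{X'}$, the object $f_*E$ need not be $\sigma$-semistable. So nothing you have gives an inequality $\|v(f_*E)\|\le C|Z(f_*E)|$.

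Your heuristic transplants the slope-stability argument, using the $f_*\cO_{X'}$-action and the slopes of a filtration of $f_*\cO_{X'}$. For a Bridgeland stability condition this does not work. If $V$ is a vector bundle and $F_1$ an HN factor of $f_*E$, then $V\otimes F_1$ need not lie in $\cA$. Nor is there any controllable effect of $-\otimes V$ on $\sigma$-phases.

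Moreover, since $f_*E\in\cA$, the spread of HN phases is already at most $1$. A bound only helps if it is uniformly strictly less than $1$, and nothing in your argument produces that. Shrinking the parameter region or ``deforming in the real direction'' are not arguments either.

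In the paper this gap is filled by a genuinely new input, Li's real reduction theorem \cite{Chunyi:realreduction}. It says that a family of stability conditions on $\bP^N$ with the Bayer property and the Restriction-$N$ property induces, via $i^\sharp$, stability conditions on every smooth projective subvariety $i\colon X\hookrightarrow\bP^N$. The real work is checking these two properties for Liu's stability conditions on $E^N$ and transporting them to $\bP^N$ via $\pi_\sharp$, where $\pi\colon E^N\to E^N/(\bZ/2)^N\cong(\bP^1)^N\to\bP^N$. Without an ingredient of this kind your construction does not yield a stability condition.

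\textbf{Two further steps do not work as stated.}

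First, the symmetrization of \cite[Theorem 4.5]{LMPSZ} is stated for curves of positive genus. In the paper, invariance under $(\bZ/2)^n\rtimes\mathfrak S_n$ is obtained by combining it with \cite{FLZ:finite_albanese}, which concerns varieties with finite Albanese morphism. That is why an elliptic curve $E$ is used instead of $\bP^1$. On $(\bP^1)^n$ you have no invariance statement.

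Second, suppose you had an invariant stability condition on $X'$ and the induced one on $\Db(X')^{\mathfrak S_n}$ from \cite{MMS:inducing}. Stability conditions still do not restrict to an admissible subcategory such as $\Db(X)\hookrightarrow\Db([X'/\mathfrak S_n])$ in general. The heart must be compatible with the semiorthogonal decomposition, which is exactly the difficulty behind Question~\ref{quest:stabonku}. You would need a direct descent along the finite flat quotient $X'\to X$, analogous to $\pi_\sharp$.

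By contrast, smoothness of $X'$ is not an issue. Compose $g$ with a general element of $\mathrm{PGL}_{n+1}$ and apply Kleiman's transversality; then $X\times_{\bP^n}Y$ is smooth, and no resolution is needed.

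Note also the difference in route: the paper never pulls back along a finite surjection onto $X$. It descends the invariant stability condition from $E^N$ to $\bP^N$, then restricts along a closed embedding $X\subset\bP^N$, which is the setting where Li's criterion applies.
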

\begin{proof}[Idea of proof]
Recall that given a finite morphism $f \colon Y \to X$ between smooth projective varieties and stability conditions $\sigma_Y$ and $\sigma_X$ on $Y$ and $X$ respectively, satisfying certain conditions, there are induced stability conditions $f_\sharp\sigma_Y$ and $f^\sharp\sigma_X$ on $X$ and $Y$ \cite{Polishchuk:families-of-t-structures}.
Now let $E$ be an elliptic curve and consider the composition
$$\pi \colon E^n \to E^n/ (\bZ/2\bZ)^n \cong (\bP^1)^n \to (\bP^1)^n/\mathfrak{S}_n \cong \bP^n.$$
By Theorem~\ref{thm:Liu} there is a stability condition $\sigma$ on $E^n$. Combining the results in \cite{FLZ:finite_albanese, LMPSZ} it is possible to show that $\sigma$ is fixed by the action of $(\bZ/2\bZ)^n \rtimes \mathfrak{S}_n$, and to obtain a stability condition $\pi_\sharp\sigma$ on $\bP^n$. Next in \cite{Chunyi:realreduction} it was proved (very roughly speaking) that if one can define a family of stability conditions on $\bP^n$ satisfying certain properties, called Bayer property and Restriction-$N$ property, then any smooth projective subvariety of $\bP^n$ inherits stability conditions. These special requirements are checked for $\sigma$, and as a consequence are verified for $\pi_\sharp\sigma$. This implies that if $i \colon X \to \bP^n$ is a closed embedding, then $i^\sharp\pi_\sharp \sigma$ is a stability condition on $\Dperf(X)$.
\end{proof}

A natural question is whether these stability conditions have well-behaved moduli spaces.

\begin{Ques} \label{quest:chunyistabproper}
Do the stability conditions in Theorem~\ref{thm:chunyili} satisfy the conditions in Definition~\ref{definition-stability-families}?  
\end{Ques}

In the paper in preparation \cite{Ziqi2} the author shows that the stability conditions induced on $(\bP^1)^n$ satisfy the conditions in Definition~\ref{definition-stability-families}, making a first progress towards Question~\ref{quest:chunyistabproper}.

Another interesting direction would be to consider when $\Dperf(X)$ has a semiorthogonal decomposition, as for instance in the examples in Section~\ref{sec:examples}, and try to induce a stability condition on the semiorthogonal components. A first case would be that of cubic hypersurfaces. 
\begin{Ques} \label{quest:stabonku}
Let $Y \subset \bP^{n+1}$ be a cubic hypersurface of dimension $n$ over an algebraically closed field $k$. Consider the subcategory $\Ku(Y)=\langle \cO_Y, \cO_Y(1),  \dots, \cO_Y(n-2) \rangle^\perp$. Are there stability conditions on $\Db(Y)$ which restrict to stability conditions on $\Ku(Y)$?  
\end{Ques}
We discuss more details on stability conditions on the Kuznetsov component of cubic fourfolds and threefolds in the next section. Moreover, the case of cubic fivefolds has been recently treated in \cite{Peize}.
Answering Question~\ref{quest:stabonku} would provide the first construction of stability conditions on $\Ku(Y)$ where $Y$ is a cubic hypersurface of dimension $n> 5$. In the special case $n=7$, one would have stability conditions on a noncommutative CY threefold (see Example~\ref{ex:cubicseven}). Even the case of cubic fourfolds would be interesting. Indeed, the known stability conditions are constructed through an involved procedure which makes hard to study the geometry of the associated moduli spaces of semistable objects. A more natural approach using the stability conditions from Theorem~\ref{thm:chunyili} could simplify many arguments from \cite{LPZ:twistedcubics, LPZ:ellipticquintics}. 

\subsection{Hyperk\"ahler geometry} \label{sec:HKgeo} In this section, we work over $\bC$. A smooth projective variety $X$ is hyperk\"ahler if it is simply connected and the space of holomorphic two-forms on $X$ is one-dimensional, spanned by a symplectic form. The importance of this class of varieties is originally motivated by the Beuville--Bogomolov decomposition theorem, where HK manifolds appear as one of the building blocks of manifolds with trivial canonical bundle. Their study has deep connections to differential geometry and theoretical physics. 

One of the most intriguing problems is the classification of HK varieties. In dimension $2$, they are K3 surfaces, while producing examples in higher dimension is hard. Up to deformation equivalence, the following different examples are known by the works of Beauville \cite{Beauville:HK} and O'Grady \cite{OGrady, OG6}:
\begin{itemize}
\item for $n \geq 2$, the Hilbert scheme $S^{[n]}$ of points of length $n$ on a K3 surfaces $S$;
\item for $n \geq 2$, the generalised Kummer variety $\Kum^n(A)$ associated to an abelian surface $A$. It is defined as the kernel of the summation map composed with the Hilbert--Chow morphism $A^{[n+1]} \to A$.
\item a $10$-dimensional example OG10 constructed as a resolution of a singular moduli space of stable sheaves on a K3 surface;
\item a $6$-dimensional example OG6 constructed as a resolution of a singular generalized Kummer variety of an abelian surface.
\end{itemize}
A HK variety $X$ is of K3-type, Kummer-type, OG10-type or OG6-type if it is deformation equivalent to $S^{[n]}$, $\Kum^n(A)$, OG10 or OG6 as above. Let us explain the connection with noncommutative K3 or abelian surfaces as in Examples~\ref{ex:cubicfourfolds}, \ref{ex:GMvarieties}, \ref{ex:ncas}.

\subsubsection{K3-type HK manifolds}
 
By the combination of many works \cite{Mukai:BundlesK3, Beauville:HK, OG:weight2,  Yoshioka:Irreducibility, Yoshioka:Abelian} moduli spaces of stable sheaves on a K3 surface $S$ are HK manifolds of K3-type (see \cite{BM:projectivity} for the generalization to stable objects in $\Dperf(X)$ with respect to a stability condition). Since the second Betti number of $S^{[n]}$ is $23$, polarized HK varieties of dimension $2n$ of K3-type have $20$-dimensional moduli. However, those arising from moduli spaces of stable sheaves on K3 surfaces have $19$-dimensional moduli as polarized K3 surfaces. Besides some special case given by Fano varieties of lines on cubic fourfolds, Debarre--Voisin varieties and double EPW sextics, there were no other explicit descriptions of locally complete families of polarized HK manifolds of K3-type constructed through classical geometry. 

On the other hand, cubic fourfolds combine the properties of having $20$-dimensional moduli and a natural associated noncommutative K3 surface. In this situation, one can hope to produce locally complete families of polarized HK varieties of K3-type from moduli spaces of stable objects in the noncommutative K3 surface by constructing a stability condition as in Definition~\ref{definition-stability-families}. This is proved in \cite{BLMS:Kuznetsov, BLMNPS:families} and we summarize here the main results. 

Let $Y$ be a cubic fourfold. The numerical Grothendieck group of $\Ku(Y)$ is the quotient
$$\Knum(\Ku(Y))=\rK_0(\Ku(Y))/ \ker\chi$$
of the Grothendieck group of $\Ku(Y)$ by the kernel of the Euler pairing $\chi$ defined by
$$\chi(-,-)=\sum_i (-1)^i \dim\Hom(-,-[i]).$$
As computed in \cite{AddingtonThomas:CubicFourfolds}, there are two classes $\lambda_1$ and $\lambda_2 \in \Knum(\Ku(Y))$ spanning an $A_2$-lattice with respect to the pairing $(-,-)=-\chi(-,-)$:
$$
\Knum(\Ku(Y)) \supset \langle \lambda_1, \lambda_2 \rangle = \begin{pmatrix}
2 & -1 \\
-1 & 2
\end{pmatrix}.
$$
If $Y$ is very general, then $\Knum(\Ku(Y))=A_2$. 

Consider now a family of cubic fourfolds $Y \to S$ over a connected quasi-projective variety $S$. Similarly to Definition~\ref{def:grothgroup} or explicitly by \cite[Definition 21.1]{BLMNPS:families}, it is possible to define the relative numerical Grothendieck group $\Knum(\Ku(Y)/S)$ of $\Ku(Y)$. The constructed stability conditions on $\Ku(Y)$ over $S$ are \emph{numerical}, in the sense that they are defined with respect to a group morphism $\bv \colon \Knum(\Ku(Y)/S) \to A_2^\vee$, as below, and fiberwise are numerical stability conditions.

\begin{Thm}[\cite{BLMNPS:families}, Proposition 30.5] \label{thm:stabKufamily}
Let $Y \to S$ be a family of cubic fourfolds over a connected quasi-projective variety $S$. Then there is a stability condition $\sigma$ on $\Ku(Y)$ over $S$ with respect to $A_2^\vee$.    
\end{Thm}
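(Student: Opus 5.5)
Following \cite{BLMS:Kuznetsov, BLMNPS:families}, I will first build the stability condition fiber by fiber and then check the conditions of Definition~\ref{definition-stability-families} in the family.

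\textbf{Fiberwise construction.} For each point $s \in S$, I will construct a stability condition on $\Ku(Y_s)$ using the conic fibration method of \cite{BLMS:Kuznetsov}. Blow up a line $L \subset Y_s$ to get $\tilde Y_s \to \bP^3$, a conic fibration. Then $\Ku(Y_s)$ embeds into $\Db(\bP^3, \cB_0)$, where $\cB_0$ is the even Clifford algebra of the fibration. On $\Db(\bP^3,\cB_0)$ I will:
\begin{itemize}
\item construct tilt stability $\sigma_{\alpha,\beta}$, using a Bogomolov--Gieseker-type inequality for $\cB_0$-modules;
\item tilt a second time to obtain a weak stability condition;
\item restrict this weak stability condition to $\Ku(Y_s)$.
\end{itemize}
The key point in the restriction is that the exceptional objects generating the complement of $\Ku(Y_s)$ are mapped to the kernel or boundary of the weak central charge. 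This forces the restricted central charge to be a genuine stability function on the induced heart. The restriction is compatible with $\bv \colon \Knum(\Ku(Y)/S) \to A_2^\vee$, since $\lambda_1,\lambda_2$ are deformation invariant. The support property is inherited, with $Q$ the quadratic form induced by the Euler pairing on $A_2$. I will then use the $\widetilde{\mathrm{GL}}_2^+(\bR)$-action to normalize the central charges, so that a single $Z \colon A_2^\vee \to \bC$ works for all $s$.

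\textbf{Passing to families.} Rather than gluing the fibers directly, I will run the whole construction in the family. This needs a relative line: after an étale base change $S' \to S$ there is a family of lines, so $\tilde Y_{S'} \to \bP^3_{S'}$ is a family of conic fibrations with a relative Clifford algebra. For this relative category I will invoke \cite[Theorem 1.3]{BLMNPS:families}, which says that tilt-type constructions on flat families with a relative BG inequality satisfy the conditions of Definition~\ref{definition-stability-families}.

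For the induced stability condition on the admissible subcategory $\Ku(Y_{S'})$, I will check the three conditions separately.
\begin{itemize}
\item \emph{HN structures over curves.} The heart of the restriction is the intersection of the integrated heart with $\Ku$, and it is again Noetherian over a Dedekind base.
\item \emph{Openness of geometric stability.} This follows because stability in $\Ku$ agrees with stability of the ambient weak stability condition for the relevant objects.
\item \emph{Boundedness.} This follows from boundedness of the ambient moduli, since the inclusion is a closed immersion on the relevant moduli stacks.
\end{itemize}
Finally, the stability condition descends from $S'$ to $S$, using that the definition is étale local and that the restriction to $\Ku$ is independent of the choice of line up to the group action.

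\textbf{Main obstacle.} I expect the hardest step to be the relative ``integrates to HN structures over curves'' condition for the restricted category. It requires controlling the double-tilted heart in families over a DVR, together with the independence of the construction from the choice of line needed for descent. My first attempt will be the general criterion in \cite{BLMNPS:families} for stability conditions induced on semiorthogonal components from weak stability conditions satisfying the relative support property.
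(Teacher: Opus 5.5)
\textbf{Where the proposal breaks.} Your overall route is the paper's: the conic-fibration construction of \cite{BLMS:Kuznetsov} on each fiber, run on the relative $\Db(\bP^3_S,\cB_0)$ and induced on $\Ku$ by \cite[Theorem 23.1]{BLMNPS:families}. The gap is where you replace that theorem with your own checks. Your arguments for openness and boundedness both assume that stability in $\Ku$ agrees with stability for the ambient weak stability condition, and this is false. A short exact sequence in $\cA\cap\Ku$ is one in $\cA$, so ambient semistability of an object of $\cA\cap\Ku$ implies semistability in $\Ku$. The converse fails, because a subobject in $\cA$ of an object of $\Ku$ need not lie in $\Ku$. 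So $\Ku$-stable objects need not be semistable in the ambient category, and neither openness nor boundedness can be read off from the ambient moduli. These conditions, together with the HN structures over curves, are exactly what Theorem 23.1 supplies; you should invoke it rather than check them separately. Two related corrections. First, on the three-dimensional category $\Db(\bP^3_S,\cB_0)$, tilt stability and its second tilt are only \emph{weak} stability conditions, so \cite[Theorem 1.3]{BLMNPS:families} is not the right input. Second, the fiberwise hypotheses of the inducing criterion are not that the exceptional objects go to the kernel of $Z$. They are: the exceptional objects lie in the heart, their Serre duals lie in the heart shifted by one, they have nonzero central charge, and no nonzero object of $\cA\cap\Ku$ has $Z=0$.

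\textbf{The lattice $A_2^\vee$ and descent.} You also do not justify the clause ``with respect to $A_2^\vee$''. The relative construction gives a central charge through the twisted Chern character of $\cB_0$-modules, hence through some lattice $\Lambda_1$. The paper needs two inputs here. One is the universally locally constant $\bv\colon\Knum(\Ku(Y)/S)\to A_2^\vee$ of \cite[Lemma 30.4]{BLMNPS:families}; this is where deformation invariance of $\lambda_1,\lambda_2$ enters. The other is the computation \cite[Proposition 9.10]{BLMS:Kuznetsov} that $\Lambda_1=A_2^\vee$ compatibly with $\bv$. That computation is a genuine input, needed precisely on special cubics where $\Knum(\Ku(Y_s))$ is larger than $A_2$, and it does not follow from deformation invariance. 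Your fiberwise $\widetilde{\mathrm{GL}}_2^+(\bR)$-normalization is unnecessary, since the relative construction already has a single $Z$ because the twisted Chern character is locally constant. It would also be illegitimate, since independent fiberwise modifications need not preserve the family conditions. Finally, your \'etale descent goes beyond the paper's sketch, which applies Theorem 23.1 over $S$ without discussing a relative line. As stated, though, it does not work. Descending a fiberwise collection requires the stability conditions attached to different lines in the same $Y_s$, and to Galois-conjugate lines at non-closed points, to coincide exactly, not just up to the group action. You give no argument even for the weaker claim. One would have to run the construction over the open locus of the relative Fano variety of lines not contained in a plane, whose fibers over $S$ are irreducible, and prove that the resulting collection is constant along those fibers.
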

\begin{proof}[Idea of proof]
By \cite[Lemma 30.4]{BLMNPS:families} there is a universally locally constant Mukai morphism $\bv \colon \Knum(\Ku(Y)/S) \to A_2^\vee$ such that for every closed point $s \in S$ the base change
$$\Knum(\Ku(Y_s)) \to \Knum(\Ku(Y)/S) \xrightarrow{\bv} A_2^\vee$$
is equal to the natural composition
$$\Knum(\Ku(Y_s)) \to \Knum(\Ku(Y_s))^\vee \to A_2^\vee.$$
If $S=s$ is a point, then the blow-up of a line in a cubic fourfold $Y_s$ which is not in a plane contained in $Y_s$, produces a conic fibration over a projective space $\bP^3_s$. In this case, the Kuznetsov component $\Ku(Y_s)$ embeds in the a twisted derived category $\Db(\bP^3_s, \cB_0^s)$. Stability conditions are constructed by restriction of tilt stability on $\Db(\bP^3_s, \cB_0^s)$ in \cite{BLMS:Kuznetsov}. This generalizes to the relative setting, giving by  \cite[Theorem 23.1]{BLMNPS:families} a stability condition $\sigma$ on $\Ku(Y)$ over $S$ with respect to a group morphism $\Knum(\Ku(Y)/S) \to \Lambda_1$, which is defined by the restriction of the twisted Chern character on the relative $\Db(\bP^3_S, \cB_0^S)$. \cite[Proposition 9.10]{BLMS:Kuznetsov} shows that $\Lambda_1=A_2^\vee$, and thus $\sigma$ is a stability condition on $\Ku(Y)$ over $S$ with respect to $\bv$.
\end{proof}

The following structure result on moduli spaces of stable objects can thus be obtained by deformation to the locus of cubic fourfolds with Kuznetsov component equivalent to the bounded derived category of a K3 surface.

\begin{Thm}[\cite{BLMNPS:families}, Theorems 29.1, 29.2]
Let $Y$ be a cubic fourfold.
\begin{itemize}
\item There is a non-empty connected component $\Stab^\dag(\Ku(Y))$ of stability conditions containing those constructed in Theorem~\ref{thm:stabKufamily}.
\item Let $v \in \Knum(\Ku(Y))$ be a primitive class and $\sigma \in \Stab^\dag(\Ku(Y))$ be a $v$-generic stability condition. Let $\phi \in \bR$ such that $Z(v) \in \bR_{>0} e^{i\pi\phi}$. Then the moduli space $\rM_{\sigma}(v,\phi)$ is non-empty if and only if  $v^2 \geq -2$. In this case, it is a smooth projective HK manifold of dimension $v^2+2$ of K3 -type.
\end{itemize}
\end{Thm}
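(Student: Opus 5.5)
We would deduce the theorem from the case of K3 surfaces by deformation within the relative setting of Theorem~\ref{thm:stabKufamily}. The first step is to define $\Stab^\dag(\Ku(Y))$ as the connected component containing the stability conditions obtained by base change of the family $\sigma$ of Theorem~\ref{thm:stabKufamily} to a point. We then check that it is non-empty and independent of choices. Non-emptiness is immediate from Theorem~\ref{thm:stabKufamily} applied with $S$ a point. For well-definedness, consider any connected family of cubic fourfolds containing $Y$. The fiberwise stability conditions vary continuously and support property holds uniformly with the quadratic form $Q$. So the components obtained for different choices (for instance different lines used in the conic fibration construction) lie in a single connected family and give the same component. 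This is the content of the first bullet.

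For the second bullet, fix a primitive $v$ and a $v$-generic $\sigma\in\Stab^\dag(\Ku(Y))$. Since $v$ is primitive and $\sigma$ is generic, semistable equals stable. Then Theorem~\ref{thm:moduli}\eqref{modulithm2} shows that $\rM_\sigma(v,\phi)$ is a proper algebraic space. Smoothness follows from the CY2 property: the Serre functor of $\Ku(Y)$ is $[2]$. For stable $E$ we have $\Hom(E,E)=\bC$, and $\Ext^2(E,E)=\Hom(E,E)^\vee=\bC$. The obstruction lies in the traceless part of $\Ext^2$, which vanishes, so the space is unobstructed. Its dimension is $\dim\Ext^1(E,E)=2-\chi(E,E)=v^2+2$. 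A Mukai-type argument using the pairing on $\Ext^1$ gives a symplectic form.

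The hard part is non-emptiness exactly when $v^2\geq-2$, together with projectivity and being of K3-type. We plan to deform $Y$ in a family $\cY\to S$ over a connected quasi-projective base, with two requirements. First, $v$ should stay a Hodge class, that is, lie in $\Knum$ of every fiber. Second, some special fiber $Y_0$ should have $\Ku(Y_0)\simeq\Db(X)$ for a K3 surface $X$, possibly twisted. Such a locus is a Hassett divisor, where $v$ together with $A_2$ spans a lattice containing a hyperbolic plane; since these divisors are dense, the family can be chosen. Over $S$ we use the relative stability condition from Theorem~\ref{thm:stabKufamily}, deformed within $\Stab^\dag$ so that it remains $v$-generic on all fibers; this requires a wall-avoidance argument, which is the delicate point. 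Theorem~\ref{thm:moduli} then gives a relative moduli space $\rM\to S$ that is proper, and smooth by the previous paragraph. On the K3 fiber, the known results for K3 surfaces \cite{BM:projectivity} apply. They give non-emptiness exactly when $v^2\geq-2$, projectivity via the Bayer--Macr\`\i\ nef divisor, and deformation equivalence to $S^{[n]}$. Because $\rM\to S$ is smooth and proper, non-emptiness and deformation type spread to all fibers. Projectivity of the general fiber follows by constructing the relative Bayer--Macr\`\i\ line bundle and checking that it is ample on fibers, by openness of ampleness together with the positivity argument from the K3 case. The converse ($v^2<-2$ forces emptiness) is immediate, since a stable object has $\dim\Ext^1\geq0$.
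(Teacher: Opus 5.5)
Your overall route (relative stability conditions from Theorem~\ref{thm:stabKufamily}, relative moduli spaces from Theorem~\ref{thm:moduli}, and deformation to a cubic whose Kuznetsov component is the derived category of a K3 surface) is the one the paper indicates. Your local analysis ($\dim\Ext^1=v^2+2$, necessity of $v^2\geq-2$) is also fine. The gap is in the deformation step, which does not work as you set it up.

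A stability condition over $S$ has a \emph{constant} central charge $Z$ on the lattice $\Lambda$ of classes that stay algebraic over $S$. On a fiber $Y_s$ it induces $Z\circ\pi_s$, where $\pi_s$ is the orthogonal projection of $\Knum(\Ku(Y_s))$ onto $\Lambda\otimes\bQ$. This has two consequences.

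(a) If you only require $v$ (and $A_2$) to stay algebraic, then for $Y$ with $\Knum(\Ku(Y))\supsetneq\Lambda$ the given $v$-generic $\sigma$ is not the fiber of any stability condition over $S$. Moreover, the fibers that do occur at $Y$ may all be non-generic.

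(b) If a fiber contains a class $w$ with $\pi_s(w)=cv$, $0<c<1$, $w^2\geq-2$ and $(v-w)^2\geq-2$, then $Z_s(w)=c\,Z_s(v)$ for \emph{every} choice of $Z$. So no deformation of the relative central charge moves that fiber off this wall, and ``wall avoidance by deformation'' is not the right mechanism.

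Point (b) bites exactly at the K3 fiber. Your description of the K3 locus is wrong: $\langle A_2,v\rangle$ in general contains no hyperbolic plane (for $v\in A_2$ it is positive definite), so $\Knum(\Ku(Y_0))$ is strictly larger than $\Lambda$. What is needed instead is the following. Keep all of $\Knum(\Ku(Y))$ algebraic over $S$, or supply a wall-crossing comparison at $Y$. Use a deformation theorem for stability conditions over a base to make $\sigma$ itself a fiber; this is also what justifies applying Theorem~\ref{thm:moduli} to an arbitrary $\sigma\in\Stab^\dag$. Restrict to the Zariski-open, connected locus of an irreducible base where $\cM_\sigma(v)=\cM^{\st}_\sigma(v)$. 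Finally, choose $Y_0$ lattice-theoretically so that no class as in (b) exists there, \emph{and} so that an equivalence with a (twisted) K3 category is actually known at $Y_0$. Density of Hassett divisors in the moduli space does not give such a point in your lower-dimensional Noether--Lefschetz locus, because \cite{AddingtonThomas:CubicFourfolds} only provides the equivalence on a Zariski-open subset of each Hassett divisor.

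A second ingredient is missing on the K3 side. The results of \cite{BM:projectivity} concern Bridgeland's distinguished component $\Stab^\dag(X)$. You must show that $\Ku(Y_0)\simeq\Db(X)$, possibly composed with an autoequivalence, sends the relevant fiber of your family into that component. Since it is not known that $\Stab(X)$ is connected, identifying $\Stab^\dag(\Ku(Y_0))$ with $\Stab^\dag(X)$ needs a genuine argument. For instance, you could produce in $\Stab^\dag(\Ku(Y_0))$ a stability condition for which the objects corresponding to skyscraper sheaves are all stable of the same phase.

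There are also some smaller points:
\begin{itemize}
\item Projectivity at the given $Y$ does not follow from openness of ampleness, which only covers a neighbourhood of $Y_0$.
\item Smoothness of $\rM\to S$ requires vanishing of relative obstructions, not just smoothness of each fiber.
\item In the first bullet, ``the fiberwise stability conditions vary continuously'' is not a consequence of Definition~\ref{definition-stability-families}. For the constant family over the space of lines you must actually prove that the fibers coincide, for example by comparing slicings at the generic point of a curve using openness of stability.
\end{itemize}
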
 

Choosing $v=a\lambda_1+b\lambda_2 \in A_2$ provides the following important application to the construction of locally complete families of polarized HK varieties.

\begin{Cor}[\cite{BLMNPS:families}, Corollary 29.5] \label{cor:HKfrommoduli_cubic4}
For any pair $a, b$ of coprime integers, there is a unirational locally complete $20$-dimensional family, over an open subset of the moduli space of cubic fourfolds, of polarized HK manifolds of K3-type of dimension $2n+2$, degree $6n$ and divisibility $2n$ if 3 does not divide $n$, or degree and divisibility $2 n/3$ otherwise, where $n=a^2-ab+b^2$. 
\end{Cor}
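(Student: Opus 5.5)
The plan is to apply the relative moduli theory to the universal family of cubic fourfolds, with $\Ku$ playing the role of a family of noncommutative K3 surfaces. Let $S$ be a suitable connected quasi-projective parameter space for cubic fourfolds, for instance an open subset of $\bP(H^0(\bP^5,\cO(3)))$ of smooth cubics not containing a plane. Let $Y \to S$ be the universal family. By Theorem~\ref{thm:stabKufamily} there is a stability condition $\sigma$ on $\Ku(Y)$ over $S$ with respect to $\bv\colon \Knum(\Ku(Y)/S)\to A_2^\vee$.

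Fix $v=a\lambda_1+b\lambda_2$ with $a,b$ coprime, so that $v$ is primitive. Then
$$v^2=2a^2-2ab+2b^2=2n,$$
which is at least $-2$. After possibly deforming $\sigma$ in the relative setting, or shrinking $S$ to an open subset, we arrange that $\sigma_s$ is $v$-generic for every $s$. For a very general cubic $\Knum(\Ku(Y_s))=A_2$, so genericity is an open condition. Then semistability coincides with stability, and Theorem~\ref{thm:moduli}\eqref{modulithm2} gives a relative moduli space $\rM_\sigma(v)\to S$. It is a $\bG_m$-gerbe quotient, proper over $S$.

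By the preceding Theorem, each fiber $\rM_{\sigma_s}(v)$ is a smooth projective HK manifold of K3-type of dimension $v^2+2=2n+2$. Proper and fiberwise smooth over a smooth base gives a smooth family, using flatness via the smoothness of the stack $\cM^{\st}$, which has unobstructed deformations because $\Ext^2$ is dual to $\Hom$. The polarization comes from the Bayer–Macrì determinant line bundle $\ell_\sigma$, which is ample on each fiber. Alternatively, one can use the Mukai vector $w\in v^\perp\subset \Knum$ together with the algebraic Mukai homomorphism $\theta\colon v^\perp\to H^2(\rM,\bZ)$, an isometry onto $H^2$ as in the K3 case. For very general $s$, $v^\perp\cap A_2$ has rank one and is spanned by a vector $w$. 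This is the class of the relative theta divisor, and it defines a relative polarization after twisting if needed. Since the space of cubic fourfolds is an open subset of a projective space, it is rational, so the family is unirational.

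The degree and divisibility are a lattice computation. Take $w=(2a-b)\lambda_1+(a-2b)\lambda_2$ up to sign, which is orthogonal to $v$ in $A_2$. Its square is $6n$. Divide by the gcd: $\gcd(2a-b,\,a-2b)$ is $3$ if $3\mid n$ and $1$ otherwise, since $a\equiv -b \pmod 3$ exactly when $3\mid n$. The divisibility of $\theta(w)$ in $H^2=v^\perp\subset\widetilde H$ is computed through the embedding $A_2\subset \widetilde H(\Ku(Y),\bZ)$, a unimodular lattice of signature $(4,20)$. For primitive $w$ the divisibility equals $\gcd$ of pairings with $v^\perp$, which gives $2n$, or $2n/3$ after the rescaling by $3$.

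Local completeness means that the period map from $S$ to the moduli of polarized K3-type HK manifolds is dominant onto a component of dimension $20$. The plan is to identify $H^2(\rM_s)_{\mathrm{prim}}$ Hodge-isometrically with $A_2^\perp\subset\widetilde H(\Ku(Y_s))$, which in turn is $H^4_{\mathrm{prim}}(Y_s)$ up to a Tate twist. The local Torelli theorem for cubic fourfolds (Voisin) then makes the period map of the family a local isomorphism, since both moduli spaces have dimension $20$. I expect this step, the compatibility of $\theta$ with the family and the Hodge isometry, to be the main obstacle. The intended approach is via the relative Mukai morphism and its deformation to cubics whose $\Ku$ is $\Db$ of a K3, where it reduces to the classical Mukai–Yoshioka theory.
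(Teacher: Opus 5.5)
Your route is the one behind the statement: the paper gives no argument beyond citing \cite{BLMNPS:families}, i.e.\ the relative stability condition of Theorem~\ref{thm:stabKufamily}, the fiberwise structure theorem for $v=a\lambda_1+b\lambda_2$, a polarization from $A_2\cap v^\perp$, and local completeness because $\theta(w)^\perp\cong A_2^\perp$ carries the periods of $H^4_{\mathrm{prim}}(Y)$. However, several steps are wrong as written.

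First, your $w=(2a-b)\lambda_1+(a-2b)\lambda_2$ is not orthogonal to $v$. With $\lambda_i^2=2$ and $(\lambda_1,\lambda_2)=-1$ one gets $(v,w)=3(a^2-b^2)$. The correct vector is $w=(a-2b)\lambda_1+(2a-b)\lambda_2$. Since $x^2-xy+y^2$ is symmetric in $x,y$, your values $w^2=6n$ and $\gcd=3$ exactly when $3\mid n$ still hold. For the divisibility you need that $A_2$ is saturated in the unimodular Mukai lattice, so that $v^\perp$ surjects onto $\{\phi\in A_2^\vee:\phi(v)=0\}=\bZ\phi_0$ with $\phi_0(\lambda_1)=b$, $\phi_0(\lambda_2)=-a$. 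Then the divisibility of $w$ is $|\phi_0(w)|=2n$, and that of $w/3$ is $2n/3$.

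Second, the smoothness of $\rM_\sigma(v)\to S$ is not justified. Serre duality gives $\Ext^2(E,E)\cong\Hom(E,E)^\vee\cong\bC$, which is one-dimensional, not zero. Also, properness with smooth fibers does not by itself give flatness. The obstruction to extending $E$ along a first-order deformation of $Y_s$ inside $S$ lies in this line. You must show it vanishes by a trace/semiregularity-type argument, using that $v\in A_2$ stays algebraic on every fiber. This yields $\dim_{[E]}\rM_\sigma(v)\geq\dim S+2n+2$, hence regularity of the total space and smoothness by miracle flatness.

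Third, the moduli space of Theorem~\ref{thm:moduli} for $\bv\colon\Knum(\Ku(Y)/S)\to A_2^\vee$ contains every object whose class merely projects to $v$. So over special cubics its fiber also contains objects of Mukai vector $v+u$ with $0\neq u\in A_2^\perp\cap\Knum(\Ku(Y_s))$. For cubics containing a cubic scroll one can take $u^2=-4$, so $(v+u)^2=2n-4\geq-2$. There, ``every fiber is an HK manifold of dimension $2n+2$'' fails. To fix this, keep the open and closed part where the Mukai vector is the flat section $v$. Alternatively, remove the finitely many Hassett divisors where this happens; there are finitely many because $(v+u)^2\geq-2$ bounds $u^2$ and hence the discriminant.

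Finally, genericity needs no deformation of $\sigma$. If $3\nmid n$, then $v$ is primitive in $A_2^\vee$; since $Z$ is injective on $A_2^\vee\otimes\bR$, there are no walls at all. If $3\mid n$, the generic locus is open because the strictly semistable locus has closed image under the universally closed map $\cM_\sigma(v)\to S$. The fact that $\Knum(\Ku(Y_s))=A_2$ for very general $s$ only shows this open set is non-empty, not that it is open.
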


Given the success of the example of cubic fourfolds, it is natural to look for other situations where the above program can be carried out. This has been achieved in the case of Gushel--Mukai fourfolds and sixfolds and their Kuznetsov component as in Example~\ref{ex:GMvarieties}, leading to the following results.

\begin{Thm}[\cite{PPZ}, Theorems 1.4, 1.5]
Let X be a GM fourfold or sixfold.
\begin{itemize}
\item There is a non-empty connected component $\Stab^\dag(\Ku(X))$ of stability conditions on the Kuznetsov component $\Ku(X)$.
\item Let $v \in \Knum(\Ku(X))$ be a primitive class and $\sigma \in \Stab^\dag(\Ku(X))$ be a $v$-generic stability condition. Let $\phi \in \bR$ such that $Z(v) \in \bR_{>0} e^{i\pi\phi}$. Then the moduli space $\rM_{\sigma}(v,\phi)$ is non-empty if and only if  $v^2 \geq -2$. In this case, it is a smooth projective HK manifold of dimension $v^2+2$ of K3 -type.
\end{itemize}
\end{Thm}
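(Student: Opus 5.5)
The plan follows the strategy used above for cubic fourfolds: construct stability conditions in families, then deform to a locus where $\Ku(X)$ is the derived category of an honest K3 surface and import the known results there.

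\textbf{Step 1: existence of $\Stab^\dag(\Ku(X))$.} Projecting from the vertex of the Grassmannian cone (or using a conic/quadric fibration structure on a blow-up of $X$), we embed $\Ku(X)$ into a twisted derived category $\Db(\bP^k,\cB_0)$ of a sheaf of even Clifford algebras. We then restrict tilt stability from there, as in Example~\ref{ex:tiltstab} and the proof of Theorem~\ref{thm:stabKufamily}. The point is to show that the tilted heart, suitably rotated, restricts to a heart on $\Ku(X)$. This uses vanishing of maps from the exceptional objects $\cO_X,\cU_X^\vee,\dots$ together with a Bogomolov-type inequality on $\Db(\bP^k,\cB_0)$. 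The numerical lattice is computed directly: $\Knum(\Ku(X))$ contains a rank-two sublattice $\langle\lambda_1,\lambda_2\rangle$ with Gram matrix $\mathrm{diag}(2,2)$ for the pairing $-\chi$, with equality for very general $X$. We define the central charge on this lattice and verify the support property with $Q(v)=v^2+2$ type bounds.

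\textbf{Step 2: family version.} We redo Step 1 over a base $S$ parametrizing GM fourfolds (or sixfolds), using the relative Clifford algebra over $\bP^k_S$ and the general machinery of \cite[Theorem 23.1]{BLMNPS:families}. This gives a stability condition on $\Ku(X)$ over $S$ in the sense of Definition~\ref{definition-stability-families}, with respect to a universally locally constant Mukai vector. Theorem~\ref{thm:moduli} then provides relative moduli spaces $\rM_\sigma(v)\to S$ that are proper when $\sigma$ is $v$-generic, and smooth of dimension $v^2+2$ by the CY2 property ($\Ext^2$ dual to $\Hom$, so unobstructedness follows from Mukai's argument).

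\textbf{Step 3: deformation to the K3 locus.} GM fourfolds (and sixfolds, via the duality/period results of Kuznetsov--Perry) have Hodge-special divisors on which $\Ku(X)\simeq\Db(S')$ for a K3 surface $S'$, possibly twisted. There the claims are known by \cite{BM:projectivity}: nonemptiness iff $v^2\ge -2$, and the moduli space is a HK of K3-type. Given a primitive $v$ on arbitrary $X$, we choose a family over a connected base joining $X$ to such a point, keeping $v$ of Hodge type along the family, for instance by staying in the Noether--Lefschetz locus where $v$ remains algebraic. Since $\rM_\sigma(v)\to S$ is smooth and proper, nonemptiness, the symplectic structure and the deformation type propagate.

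\textbf{Main obstacle.} The hardest part is Step 3: producing a family in which the chosen $\sigma$ stays in $\Stab^\dag$ and remains $v$-generic, and in which the point over the K3 locus corresponds to a Bridgeland stability condition in the distinguished component of the K3 surface. I would handle this by first proving that the stability conditions of Step 1 lie in one orbit under $\widetilde{\mathrm{GL}}^+_2(\bR)$ in the K3 case, and then using wall-crossing to move between generic chambers.
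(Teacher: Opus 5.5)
Your architecture matches the program the survey describes for cubic fourfolds and reports was carried out in \cite{PPZ}: a stability condition on $\Ku(X)$ over a base of GM fourfolds, relative moduli spaces from Theorem~\ref{thm:moduli}, and deformation to the locus where $\Ku(X)\simeq\Db(S,\alpha)$ for a (twisted) K3 surface. The genuine gap is the step you flag yourself, and your proposed remedy does not close it. To apply \cite{BM:projectivity} at a point of the K3 locus, you need an equivalence $\Ku(X)\simeq\Db(S,\alpha)$ that carries $\Stab^\dag(\Ku(X))$ into the distinguished component of $\Stab(\Db(S,\alpha))$. Here $\Stab^\dag(\Ku(X))$ is by definition the component containing your constructed stability conditions. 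Showing that these form one $\widetilde{\mathrm{GL}}^+_2(\bR)$-orbit, and moving between chambers by wall-crossing, only relates points of $\Stab^\dag(\Ku(X))$ to each other. It says nothing about which component of $\Stab(\Db(S,\alpha))$ they land in, and without a connectedness result you cannot assume it is the distinguished one. Some geometric input is required. For instance, at a suitable point of the K3 locus you could exhibit an explicit equivalence under which a stability condition in your family makes the images of all skyscraper sheaves stable of the same phase. It is then geometric and lies in the distinguished component by Bridgeland's description of the geometric chamber. You would still need an argument propagating this along the K3 locus.

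Three further steps are not justified as written. First, GM sixfolds are all double covers of $\rG(2,5)$, and projecting from the vertex just recovers that double cover, so you give no Clifford-algebra embedding for them. The natural route is the Kuznetsov--Perry duality theorem: GM varieties of the same parity of dimension with the same Lagrangian data have equivalent Kuznetsov components. This lets you replace a sixfold by a GM fourfold, but it must be used, in families, already to define $\Stab^\dag$ and to run Steps 1--2, not only at the K3 locus. Second, even for fourfolds you have not specified the fibration or verified the hypotheses of the BLMS criterion: exceptional objects in the tilted heart, their Serre images in its shift, no nonzero object of $\Ku(X)$ in the heart with vanishing central charge, and a Bogomolov inequality over the Clifford algebra. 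Also, $Q(v)=v^2+2$ is not a quadratic form, so it cannot serve in the support property. Third, when $v\notin\langle\lambda_1,\lambda_2\rangle$, you need a connected family along which $v$ stays Hodge, which reaches the K3 locus, and over which $\sigma$ stays $v$-generic even though the lattice of Hodge classes jumps along the way and can create new walls. This requires lattice-theoretic input on the image of the period map for GM fourfolds, together with deforming the central charge within stability conditions over the base. ``Staying in the Noether--Lefschetz locus'' does not by itself supply either.
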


If $X$ is a GM fourfold, then $\Knum(\Ku(X))$ contains an $A_1^{\oplus 2}$-lattice 
$$
\Knum(\Ku(X)) \supset \langle \lambda_1, \lambda_2 \rangle = \begin{pmatrix}
2 & 0 \\
0 & 2
\end{pmatrix}
$$
by \cite{Kuz-Perry:GM}. As an application, one obtains other infinite series of $20$-dimensional families of polarized HK manifolds of K3-type.

\begin{Cor}
For any pair $a, b$ of coprime integers, there is a unirational locally complete family, over an open subset of the moduli space of GM fourfolds, of polarized HK manifolds of K3 type of dimension $2(a^2 + b^2 + 1)$, degree $2(a^2 + b^2)$, and divisibility $a^2 + b^2$.    
\end{Cor}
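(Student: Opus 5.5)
The plan mirrors the proof of Corollary~\ref{cor:HKfrommoduli_cubic4}, with the $A_2$-lattice of cubic fourfolds replaced by the $A_1^{\oplus 2}$-lattice of GM fourfolds.

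\emph{Step 1: the Mukai vector and moduli spaces.} Fix coprime integers $a,b$ and set $v=a\lambda_1+b\lambda_2$. Since $\gcd(a,b)=1$ and $\lambda_1,\lambda_2$ span a primitive sublattice present for every GM fourfold, $v$ is primitive. From the Gram matrix $\mathrm{diag}(2,2)$ we get $v^2=2(a^2+b^2)\geq -2$. For a $v$-generic $\sigma\in\Stab^\dag(\Ku(X))$, the preceding theorem of \cite{PPZ} then gives that $\rM_\sigma(v)$ is a smooth projective HK manifold of K3-type of dimension $v^2+2=2(a^2+b^2+1)$.

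\emph{Step 2: the relative construction.} Take a family $X\to S$ of GM fourfolds over an open subset $S$ of the moduli space, or a suitable finite cover or parameter space. We need:
\begin{itemize}
\item an $S$-linear Kuznetsov component $\Ku(X)$ (Example~\ref{ex:GMvarieties});
\item a universally locally constant Mukai morphism $\bv\colon \Knum(\Ku(X)/S)\to (A_1^{\oplus 2})^\vee$;
\item a stability condition $\sigma$ on $\Ku(X)$ over $S$ in the sense of Definition~\ref{definition-stability-families}, restricting fiberwise to $\Stab^\dag$.
\end{itemize}
This is the analogue of Theorem~\ref{thm:stabKufamily}. One builds it from the relative version of the construction in \cite{PPZ}, namely restriction of tilt stability through a quadric fibration or conic-bundle description, using \cite[Theorem 23.1]{BLMNPS:families}. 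Shrinking $S$ so that $\sigma_s$ is $v$-generic for all $s$, Theorem~\ref{thm:moduli} yields a relative moduli space $\rM_\sigma(v)\to S$ which is proper. It is smooth because each fiber is smooth of constant dimension and the stack is a $\bG_m$-gerbe over it. Unirationality follows from unirationality of the moduli of GM fourfolds.

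\emph{Step 3: polarization and local completeness.} The polarization comes from the Bayer--Macr\`i class $\theta_v$ applied to a class in $v^\perp$. The natural choice is $w=b\lambda_1-a\lambda_2$, and the class $\theta_v(w)$ is defined in families. Its square is $w^2=2(a^2+b^2)$. Its divisibility in $H^2(\rM_\sigma(v),\bZ)\cong v^\perp\subset\widetilde H$ is $a^2+b^2$. Indeed, the pairing of $w$ with $v^\perp$ hits $2$ times the even-lattice contributions, and the standard computation gives $\mathrm{div}=v^2/2$ when $\langle v,w\rangle$ generate a saturated $A_1^{\oplus2}$ of this shape. This divisibility computation requires care.

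For local completeness, the period map of $(\rM_\sigma(v),\theta_v(w))$ must have $20$-dimensional image. The Mukai vector $\theta_v$ identifies $H^2(\rM_\sigma(v))$ Hodge-isometrically with $v^\perp$ in the Mukai lattice of $\Ku(X)$, and the transcendental part is that of $X$. Hence local injectivity of the period map for GM fourfolds, modulo the fibers of the period map of dimension $4$ (the EPW data), gives a $20$-dimensional image.

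The main obstacle is Step 2: producing a genuine family stability condition over $S$ and verifying generic $v$-genericity. I would first try to adapt \cite[Proposition 30.5]{BLMNPS:families} verbatim to the relative GM setting. The divisibility calculation in Step 3 is the secondary delicate point.
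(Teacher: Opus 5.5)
Your proposal is correct and follows the argument the paper has in mind. The corollary is stated as an application of \cite{PPZ}, exactly parallel to Corollary~\ref{cor:HKfrommoduli_cubic4}. The ingredients are the ones you list:
\begin{itemize}
\item the class $v=a\lambda_1+b\lambda_2$;
\item a stability condition on the Kuznetsov component over a family of GM fourfolds;
\item the relative moduli space over the open locus where no strictly semistable objects of class $v$ occur (open because $\cM_\sigma(v)\to S$ is universally closed, even though the Noether--Lefschetz locus is only a countable union of divisors);
\item the Bayer--Macr\`i polarization, a positive multiple of $\theta_v(\pm w)$ with $w=b\lambda_1-a\lambda_2$ primitive;
\item local completeness via the period map of GM fourfolds.
\end{itemize}

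Two justifications need repair.

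\emph{Divisibility.} $\langle v,w\rangle$ is in general not saturated: it has index $a^2+b^2$ in $A_1^{\oplus 2}$. The correct argument uses that the Mukai lattice is unimodular and $A_1^{\oplus 2}$ is primitive in it, so
$\{(w,x)\mid x\in v^\perp\}=\{\phi(w)\mid \phi\in (A_1^{\oplus 2})^\vee,\ \phi(v)=0\}=(a^2+b^2)\bZ$,
which gives divisibility $a^2+b^2$.

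\emph{Smoothness.} Smoothness of $\rM_\sigma(v)\to S$ does not follow from smoothness of the fibres alone, since flatness is needed. It comes instead from deformation theory: the obstruction to extending a stable object $E$ along $S$ lies in $\Ext^2(E,E)\cong\bC$, and it vanishes because $v$ stays algebraic on every fibre.
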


\subsubsection{Modular interpretation of HK manifolds} \label{sec:modularHK}

Let $Y$ be a cubic fourfold. Beauville and Donagi showed that the Fano variety $F(Y)$ parametrizing lines in $Y$ is a projective HK fourfold of K3-type \cite{BeauDonagi}. Assuming that $Y$ does not contain a plane, in \cite{LLSvS:twistedcubics} Lehn, Lehn, Sorger and van Straten constructed a projective HK eightfold $Z(Y)$ of K3-type out of twisted cubic curves on $Y$.
The connection with the moduli spaces in Theorem~\ref{cor:HKfrommoduli_cubic4} is provided in the following result.

\begin{Thm}[\cite{LPZ:twistedcubics}]
Let $Y$ be a cubic fourfold. Then there is a stability condition $\sigma \in \Stab^\dag(\Ku(Y))$ such that $F(Y) \cong \rM_\sigma(\lambda_1+\lambda_2)$, and if $Y$ does not contain a plane, then $Z(Y) \cong \rM_\sigma(2\lambda_1+\lambda_2)$.   
\end{Thm}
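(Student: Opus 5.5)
The strategy is to realise each variety as a component of a moduli space of objects in $\Ku(Y)$ via the projection functor, and then to show that this component is the whole moduli space using that both sides are smooth projective HK manifolds of the same dimension.

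\textbf{Lines.} For a line $\ell \subset Y$, I take the ideal sheaf $I_\ell$ and project it into $\Ku(Y)$. Concretely, I set $P_\ell := \mathrm{L}_{\cO_Y}(I_\ell(1))$, or a suitable shift/twist of it, so that it lies in $\Ku(Y)=\langle \cO_Y,\cO_Y(1),\cO_Y(2)\rangle^\perp$ after applying the left mutation functor.

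I then do the following in order:
\begin{enumerate}
\item Compute the class of $P_\ell$ via Chern characters and check that it is $\lambda_1+\lambda_2$. This is a routine Riemann--Roch computation using the explicit description of $\lambda_1,\lambda_2$ from \cite{AddingtonThomas:CubicFourfolds}.
\item Check that $P_\ell$ is simple with $\Ext^1(P_\ell,P_\ell)\cong \bC^4$, by adjunction and standard vanishing.
\item Prove that $P_\ell$ is $\sigma$-stable for a suitable $\sigma\in\Stab^\dag(\Ku(Y))$. Here I take $\sigma$ induced by tilt stability on the conic fibration model from the proof of Theorem~\ref{thm:stabKufamily}. Alternatively, I can argue by a wall-crossing computation in tilt stability on $\Db(Y)$, showing there is no destabilising subobject because the class $\lambda_1+\lambda_2$ has $v^2=2$, which is minimal, and using the $\Ext^1$ computation.
\end{enumerate}

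The family $\{P_\ell\}$ then defines a morphism $F(Y)\to \rM_\sigma(\lambda_1+\lambda_2)$. It is injective, since $I_\ell$ can be recovered from $P_\ell$, and étale, since the differential is an isomorphism on tangent spaces by step 2. The target is a smooth projective HK fourfold by the structure theorem preceding Corollary~\ref{cor:HKfrommoduli_cubic4}, because $(\lambda_1+\lambda_2)^2=2$ and $\sigma$ is generic for this primitive class. An injective étale map from a proper variety into a connected variety of the same dimension is an isomorphism.

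\textbf{Twisted cubics.} I run the same scheme for $Z(Y)$, which is built from twisted cubics on $Y$:
\begin{itemize}
\item A generalized twisted cubic $C$ spans a $\bP^3$ and hence a cubic surface $S=Y\cap\bP^3$. LLSvS construct $Z(Y)$ as a contraction of the relative moduli of such pairs $(C,S)$, with fibres over $Z(Y)$ given by linear systems $\bP^2$ of cubics on a fixed $S$.
\item For each such pair I consider the sheaf $F_C=\ker(\cO_S(1)\twoheadrightarrow\ldots)$ or $I_{C/S}(2)$ and project it to an object $P_C\in\Ku(Y)$ of class $2\lambda_1+\lambda_2$.
\item I show $P_C$ depends only on the point of $Z(Y)$, that is, it is constant along the $\bP^2$-fibres, and that it is $\sigma$-stable.
\end{itemize}
This gives a morphism $Z(Y)\to \rM_\sigma(2\lambda_1+\lambda_2)$. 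The target is a smooth HK eightfold since $(2\lambda_1+\lambda_2)^2=6$. Injectivity plus equal dimension and smoothness then gives the isomorphism, as for lines.

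\textbf{Main obstacle.} I expect the hardest step to be proving $\sigma$-stability of the projected objects for every $C$, including non-CM or degenerate cubics and the $\Hom$-computations for non-reduced $S$. My first approach is to use the explicit stability condition built from the conic fibration in the line-blowup model. I would reduce stability to a slope bound on subobjects in the tilted heart, and handle the degenerate cubics by a deformation/openness argument combined with a direct analysis of the finitely many special strata. If that fails, I would use that for $v^2$ small the walls are few and can be excluded numerically using the $A_2$-lattice.
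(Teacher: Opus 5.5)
\textbf{Verdict: there is a genuine gap.} The step carrying all the content of the theorem is not proved, and the LLSvS geometry is misdescribed on the twisted-cubic side.

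\textbf{The stability step.} Your skeleton is the natural one: project into $\Ku(Y)$, compute the class, prove $\sigma$-stability, then compare with $\rM_\sigma(v)$. For $v$ primitive and $\sigma$ $v$-generic in $\Stab^\dag(\Ku(Y))$, that moduli space is a smooth projective HK manifold of dimension $v^2+2$ by \cite{BLMNPS:families}. Your first choice of $\sigma$, the stability conditions of \cite{BLMS:Kuznetsov} coming from the conic fibration, is the right kind of $\sigma$. The final comparison is also fine. An injective morphism from a smooth proper irreducible variety to an irreducible normal variety of the same dimension is an isomorphism over $\bC$ by Zariski's main theorem. So you do not need étaleness, which in any case does not follow from $\dim\Ext^1(P_\ell,P_\ell)=4$ alone.

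What is missing is $\sigma$-stability of every $P_\ell$ and of every twisted-cubic object, for one $\sigma$ and every $Y$. Your numerical fallback cannot supply it:
\begin{itemize}
\item If $\Knum(\Ku(Y))=A_2$, then $Z$ is injective on $A_2\otimes\bR$ and there are no walls at all. But this only says that stability of a given object is independent of $\sigma$, not that it holds.
\item For $\lambda_1+\lambda_2$ you could then conclude by Mukai's lemma. Every HN factor $A$ has $\dim\Ext^1(A,A)\geq 4$, because nonzero classes of $A_2$ have square $\geq 2$.
\item For $2\lambda_1+\lambda_2=\lambda_1+(\lambda_1+\lambda_2)$, a sum of two roots with pairing $1$, the same count gives $4+4=8=\dim\Ext^1(P_C,P_C)$. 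So nothing numerical rules out $P_C$ being an unstable extension of two objects of $F(Y)$-type.
\item For special $Y$, $\Knum(\Ku(Y))$ has signature $(2,\rho-2)$ and can contain classes of square $-2$ or $0$, so honest walls occur.
\item You also cannot deduce the special case from the very general one, since stability is open but not closed in families.
\end{itemize}
This is exactly where \cite{LPZ:twistedcubics} puts the work. There, the objects are transported along $\Ku(Y)\hookrightarrow\Db(\bP^3,\cB_0)$ and their stability is controlled there.

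\textbf{The twisted-cubic geometry.} The $\bP^2$-fibration is $M_3(Y)\to Z'(Y)$, not a map to $Z(Y)$. $Z(Y)$ is obtained from $Z'(Y)$ by contracting the divisor coming from non-Cohen--Macaulay cubics (plane cubics with a spatial embedded point) onto a copy of $Y\subset Z(Y)$. Constancy along the $\bP^2$'s therefore only gives a morphism from $Z'(Y)$.

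To factor through $Z(Y)$ and obtain injectivity, you must handle the non-CM curves separately:
\begin{itemize}
\item For them $I_{C/S}(2)$ has depth $1$ at the embedded point, so it is not the aCM sheaf your recipe is built on.
\item You must produce $\sigma$-stable objects of class $2\lambda_1+\lambda_2$ for these curves.
\item You must show these objects depend only on the image point $y\in Y$. Descent then uses that $Z(Y)$ is normal and $M_3(Y)\to Z(Y)$ has connected fibres.
\item You must show they are pairwise non-isomorphic and distinct from the objects attached to CM cubics.
\end{itemize}

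Finally, your argument never uses that $Y$ contains no plane. That hypothesis is what makes every $Y\cap\bP^3$ an integral cubic surface, and the whole construction of $Z(Y)$ rests on it. Its absence is another sign that the case analysis behind stability has not yet been engaged.
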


It is natural to ask what happens considering rational curves of higher degree on $Y$. In \cite{LPZ:ellipticquintics} we address the case of rational quartic curves in $Y$, equivalently elliptic quintic curves in $Y$ by residuality. First, we study singular moduli spaces of semistable objects in $\Ku(Y)$ with numerical class $2v_0$ with $v_0^2=2$.
\begin{Thm}[\cite{LPZ:ellipticquintics}]
Let $Y$ be a cubic fourfold. Let $v_0 \in \Knum(\Ku(Y))$ primitive with $v_0^2=2$, and set $v=2v_0$.
If $\sigma \in \Stab^\dag(\Ku(Y))$ is $v$-generic, the moduli space $\rM_\sigma(v)$ has a symplectic resolution $\widetilde{\rM(v)}$ which is a $10$-dimensional smooth projective HK manifold of OG10-type.
\end{Thm}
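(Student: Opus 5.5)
The plan is to reduce to the case of a K3 surface by deformation, and there to invoke O'Grady's construction together with the Lehn--Sorger/Perego--Rapagnetta framework for symplectic resolutions of singular moduli spaces with Mukai vector $2v_0$, $v_0^2=2$. This mirrors the proof of the primitive case (Theorems 29.1, 29.2 of \cite{BLMNPS:families} recalled above).

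The first step is the local structure of $\rM_\sigma(v)$. The stable locus $\rM^{\st}_\sigma(v)$ is smooth of dimension $v^2+2=10$ with a symplectic form coming from the Serre functor $[2]$ of $\Ku(Y)$ (Mukai's argument only uses the CY2 property). The strictly semistable locus is $\mathrm{Sym}^2 \rM_\sigma(v_0)$, since by $v$-genericity the only decomposition is $v=v_0+v_0$. Here $\rM_\sigma(v_0)$ is a HK fourfold by the primitive-case theorem. At a polystable point $E_1\oplus E_2$, or $E^{\oplus 2}$, I would use formality of the dg algebra $\mathrm{RHom}(F,F)$ for polystable $F$ in a CY2 category. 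This formality was proved for K3 categories by Budur--Zhang and Arbarello--Saccà, and should extend to $\Ku(Y)$ via the deformation argument below or via the general results of Davison on 2CY categories. With formality, $\rM_\sigma(v)$ is étale locally isomorphic to the quadratic cone $\mu^{-1}(0)/\!\!/\mathrm{Aut}(F)$. These are exactly the local models of O'Grady's singularities, and by Lehn--Sorger they admit a symplectic resolution obtained by blowing up the singular locus with its reduced structure. Gluing these local resolutions gives $\widetilde{\rM(v)}$, a projective symplectic variety; projectivity follows because $\rM_\sigma(v)$ is projective by Theorem~\ref{thm:moduli} and the positivity lemma.

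The second step is to show that $\widetilde{\rM(v)}$ is irreducible hyperkähler of OG10-type, and I would argue by deformation. Consider the relative moduli space over a family $Y\to S$ of cubic fourfolds with a relative stability condition as in Theorem~\ref{thm:stabKufamily}, chosen $v$-generic (after possibly shrinking or moving in $\Stab^\dag$). By Theorem~\ref{thm:moduli} it is proper over $S$. The relative blow-up of the singular locus gives a smooth proper family of symplectic resolutions, because the local analysis is uniform in families. I would connect $Y$ to a cubic fourfold $Y_0$ with $\Ku(Y_0)\simeq \Db(X)$ for a K3 surface $X$, via a path in the Noether--Lefschetz divisor, with the stability condition and $v$ transported accordingly. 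On the K3 side, Perego--Rapagnetta show that moduli of Bridgeland-semistable objects with $v=2v_0$, $v_0^2=2$, on K3 surfaces have symplectic resolutions of OG10-type; for the stability conditions themselves one uses their wall-crossing behaviour. Since OG10-type is a deformation-invariant notion, the conclusion follows for $Y$. Irreducibility (simple connectedness and $h^{2,0}=1$) also propagates in smooth proper families.

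The main obstacle is making this deformation argument rigorous. It requires a $v$-generic relative stability condition along the whole path, and showing that the fiberwise symplectic resolutions glue to a smooth proper family. Genericity may fail at special fibers, and I would handle this by using that $\Stab^\dag$ varies continuously over the base, so the walls for $v$ are locally finite and can be avoided. Smoothness of the relative blow-up needs the formality/local model to hold uniformly in families. I would prove this by relative versions of the Kuranishi map, using that the Ext-algebras have constant dimension along the strictly semistable stratum.
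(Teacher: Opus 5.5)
\textbf{Assessment: the outline is the right one, but the load-bearing steps are asserted rather than proved, so there is a genuine gap.}

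The paper does not prove this statement; it only cites \cite{LPZ:ellipticquintics}. Your outline is the natural one: Lehn--Sorger's local analysis, then deformation to a cubic fourfold with $\Ku(Y_0)\simeq\Db(X)$ as in \cite[\S 29]{BLMNPS:families}.

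\textbf{The local model is assumed, not established.} The input that carries all the weight is the quadratic local model of $\rM_\sigma(v)$ at points $F\oplus F$, i.e.\ formality of $\mathrm{RHom}(E,E)$ for polystable $E\in\Ku(Y)$. You need it uniformly over the base, and you only assert it.

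\begin{itemize}
\item Deriving it ``via the deformation argument below'' is circular: your Step~2 already assumes that ``the local analysis is uniform in families''.
\item It is not a mechanism on its own either, because formality does not spread from the K3 fibre to nearby fibres. Take a non-formal minimal $A_\infty$-structure $m$ on a graded algebra $H$. Then $m^{(t)}_n=t^{n-2}m_n$ is a family with constant cohomology that is formal at $t=0$ and isomorphic to $m$ for every $t\neq 0$.
\item The same example shows that constant dimension of the Ext groups along the stratum says nothing about the higher products. That was your justification for the relative blow-up.
\item What Step~2 actually requires is that $\mathrm{Bl}_\Sigma\rM$ commutes with restriction to fibres. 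A sufficient condition is flatness over the base of all $\cO_{\rM}/I_\Sigma^n$ together with $I_\Sigma\cO_{\rM_t}=I_{\Sigma_t}$. Without it the exceptional divisor can acquire extra components over special points. A relative version of the local model is exactly what would supply this.
\item Even if you import a fibrewise statement from a general formality theorem for 2CY categories, you would have to check its hypotheses for $\Ku(Y)$ and then upgrade it to families. ``Should extend'' does neither.
\end{itemize}

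\textbf{Genericity along the path is not handled correctly.} The relative stability conditions of Theorem~\ref{thm:stabKufamily}, and their deformations, have central charge given by pairing with the lattice $\Lambda$ of classes algebraic over the whole base. On a fibre where $\Knum$ jumps, every new class $w\perp\Lambda$ therefore satisfies $Z(w)=0$. So for every such $\sigma$ the classes $v_0\pm w$ have the same phase as $v_0$.

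If $(v_0\pm w)^2=2+w^2\ge -2$, there are polystable objects $E_+\oplus E_-$ of class $v$ that are not in $\mathrm{Sym}^2\rM_\sigma(v_0)$. They break the uniform description of the singular locus. No continuity or local finiteness in $\Stab^\dag$ removes these walls, since $Z(w)=0$ for every choice of $\sigma$.

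What works instead:
\begin{itemize}
\item Only finitely many discriminants can produce such walls, so the bad fibres lie on finitely many Noether--Lefschetz divisors. Delete the corresponding finitely many points from the base curve, and keep $Y_0$ off them.
\item If $Y$ itself carries such classes, the given $v$-generic $\sigma$ does not factor through $\Lambda$. Then you must run the argument over the Noether--Lefschetz locus of all of $\Knum(\Ku(Y))$, so that $\sigma$ extends to the family.
\end{itemize}

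Granting these points, the remaining steps are fine: the Lehn--Sorger blow-up, the K3 fibre, and deformation invariance of OG10-type. For Bridgeland moduli on a K3 surface the relevant result is Meachan--Zhang's; Perego--Rapagnetta treat Gieseker moduli.
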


Our main application is the construction of a compactification of the twisted fibration in intermediate Jacobians of $Y$. Indeed, consider the family $p \colon J \to U \subset (\bP^5)^\vee$ whose fibers are the twisted intermediate Jacobians of the smooth cubic threefolds parametrized by $U$. In \cite{LSV, Voisin:twistedcase, Giulia} Laza, Saccà and Voisin showed that there is a HK tenfold of OG10-type together with a Lagrangian fibration $\bar{J} \to (\bP^5)^\vee$ compactifying $p$.

\begin{Thm}[\cite{LPZ:ellipticquintics}]
Let $Y$ be a cubic fourfold and set $v_0=\lambda_1+\lambda_2$. Then there exists a projective HK manifold $N$ birational to $\widetilde{\rM(2v_0)}$ with a Lagrangian fibration compactifying $p \colon J \to U$.    
\end{Thm}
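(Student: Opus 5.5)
We treat $v_0=\lambda_1+\lambda_2$, so $v=2v_0$ is the class of ideal sheaves of elliptic quintics (equivalently, via residuality, rational quartics) projected into $\Ku(Y)$. The plan has three steps. First, interpret a general point of $\rM_\sigma(v)$ geometrically. Second, build a Lagrangian fibration on a birational model of the symplectic resolution $\widetilde{\rM(2v_0)}$. Third, match its restriction over $U\subset(\bP^5)^\vee$ with the twisted intermediate Jacobian fibration $p\colon J\to U$.

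\textbf{Step 1: objects supported on hyperplane sections.} For a general elliptic quintic $C\subset Y$, its linear span is a $\bP^4$, which cuts out a hyperplane section $Y_H=Y\cap H$, a cubic threefold. We would project the ideal sheaf $I_{C/Y}$ (suitably twisted) into $\Ku(Y)$ and check that it has class $2v_0$. We would then show that it is $\sigma$-stable for $\sigma$ in a chamber of $\Stab^\dag(\Ku(Y))$ near the stability conditions of Theorem~\ref{thm:stabKufamily}. The natural method is wall-crossing in tilt stability on $\Db(Y)$, restricted to $\Ku(Y)$ as in \cite{LPZ:twistedcubics}. This gives a rational map $\rM_\sigma(v)\dashrightarrow(\bP^5)^\vee$, $E\mapsto H$.

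To see that the fibers are twisted intermediate Jacobians, we use Markushevich--Tikhomirov and Iliev--Markushevich: elliptic quintics in a smooth cubic threefold $Y_H$ fill out an open subset of a $\bP^5$-bundle over a torsor under $J(Y_H)$ via the Abel--Jacobi map. Quotienting out the $\bP^5$ directions corresponds to the identification of objects in $\Ku(Y)$ up to isomorphism. The fiber over $H$ is therefore the twisted torsor $J^T(Y_H)$, which is five-dimensional and Lagrangian inside the ten-dimensional space.

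\textbf{Step 2: birational model and fibration.} By the previous theorem, $\widetilde{\rM(v)}$ is a hyperk\"ahler manifold of OG10-type, and Step~1 gives a rational Lagrangian fibration on it. The rational map to $(\bP^5)^\vee$ is given by a divisor class $L$ with $q(L)=0$ in the Beauville--Bogomolov lattice; this class is computable from the Mukai vector via the Mukai homomorphism $v^\perp\to \rH^2$. We run the MMP on $\widetilde{\rM(v)}$ for $L$, equivalently wall-cross in $\Stab^\dag$ until $L$ becomes nef. This produces a birational hyperk\"ahler model $N$ on which $L$ is nef and isotropic.

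For OG10-type the SYZ conjecture is known (Mongardi--Onorati), so $L$ is semiample on $N$ and induces a Lagrangian fibration $N\to\bP^5$. Since it agrees with $E\mapsto H$ on an open set, the base is $(\bP^5)^\vee$ and the fibration restricts over $U$ to $p$.

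\textbf{Main obstacle.} The hardest step is Step~1: proving stability of the projected objects for general quintics, and verifying that the generic fiber is exactly the \emph{twisted} torsor rather than $J(Y_H)$ itself. This requires controlling destabilizing walls, which we would try first via a Bayer--Macr\`i type computation restricted to the $A_2$ lattice. We would track the twist by checking that no section exists, for instance because the torsor class corresponds to degree-$5$ one-cycles.
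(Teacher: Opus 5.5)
The survey gives no argument for this statement beyond citing \cite{LPZ:ellipticquintics}. The core input there is your Step~1: elliptic quintics, i.e.\ the Markushevich--Tikhomirov rank-two bundles $E$ on smooth hyperplane sections $j\colon Y_H\hookrightarrow Y$, give objects of class $2v_0$, and these identify an open part of $\widetilde{\rM(2v_0)}$ with an open part of the twisted family $J\to U$.

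Two small points in Step~1. First, you also need that the general point of $\rM_\sigma(2v_0)$ arises this way: injectivity of $(H,E)\mapsto$ object, plus the count $5+5=10$. This is easy once you note that $H^\bullet(Y_H,E(-i))=0$ for $i=0,1,2$, so $j_*E$ already lies in $\Ku(Y)$ and determines both $H$ (its support) and $E$. Second, the twist needs no ``no section'' check. Translating by multiples of the plane-cubic class and composing with $x\mapsto -x$ identifies the degree-$5$ torsor family with the degree-$1$ one over $U$.

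The real problems are in Step~2.

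\textbf{The isotropic class and wall-crossing.} The isotropic class does not come from the Mukai homomorphism, and wall-crossing is not equivalent to the MMP you need. For very general $Y$, $v^\perp\cap\Knum(\Ku(Y))=\bZ(\lambda_1-\lambda_2)$ with $(\lambda_1-\lambda_2)^2=6$. So $\mathrm{NS}(\widetilde{\rM(2v_0)})\otimes\bQ$ is spanned by $\theta(\lambda_1-\lambda_2)$ and the class of the exceptional divisor of $\widetilde{\rM(2v_0)}\to\rM_\sigma(2v_0)$.

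Since $\theta(\lambda_1-\lambda_2)$ is not isotropic, the class $L$ defining $E\mapsto H$ must have a nonzero exceptional component. Its isotropy is automatic anyway: $L^{10}=0$ because the map has positive-dimensional fibres, so $q(L)=0$ by Fujiki's relation.

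Every Bayer--Macr\`i class $\ell_\sigma$, however, lies in $\theta(v^\perp)_{\mathrm{alg}}\otimes\bR$, here the line spanned by $\theta(\lambda_1-\lambda_2)$. So the flops (if any) needed to make $L$ nef are not Bridgeland wall-crossings. You have to run a genuine $L$-MMP on $\widetilde{\rM(2v_0)}$; $L$ is movable, since the map to $(\bP^5)^\vee$ is defined away from codimension two. You must also justify that it reaches a model $N$ on which $L$ is nef, e.g.\ via boundedness of wall divisors for OG10-type, which makes the chamber structure locally finite near the rational isotropic ray $\bR_{>0}L$. Only then can you invoke SYZ for OG10-type.

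\textbf{The identification over $U$.} The sentence ``since it agrees with $E\mapsto H$ on an open set, the base is $(\bP^5)^\vee$ and the fibration restricts over $U$ to $p$'' is the actual content of `compactifying $p$'. It does not follow as stated: agreement on an open set only gives a birational identification over a smaller open set. You need two things:
\begin{enumerate}
\item[(a)] the fibration $f\colon N\to B$ given by $L$ has $B\cong(\bP^5)^\vee$ with $L=f^*\cO(1)$, so that $f$ \emph{is} $E\mapsto H$. A priori the six sections only give a birational map $B\dashrightarrow(\bP^5)^\vee$.
\item[(b)] $f^{-1}(U)\cong J$ over $U$.
\end{enumerate}
For (b) there is a standard argument. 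The rational map $f^{-1}(U)\dashrightarrow J$ over $U$ is a morphism. Indeed, the fibres of a resolution of its graph over the smooth $f^{-1}(U)$ are rationally chain connected, and they land in fibres of $J\to U$, which contain no rational curves. Comparing canonical classes, it contracts no divisor. A small proper birational morphism onto the smooth $J$ is an isomorphism.

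Finally, if you take as given the compactification $\bar J$ of \cite{LSV, Voisin:twistedcase, Giulia} recalled before the statement, Step~1 alone gives the theorem with $N=\bar J$. Step~2 is only needed to build $N$ from $\widetilde{\rM(2v_0)}$ itself.
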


As an application, we obtain the following important consequence on the main component of the Hilbert scheme of rational quartic curves on $Y$, answering a conjecture of Castravet.

\begin{Cor}[\cite{LPZ:ellipticquintics}]
Let $M_4(Y)$ be the connected component of the Hilbert scheme which parametrizes rational quartic curves on $Y$. Set $v_0=\lambda_1+\lambda_2$. Then $\rM_\sigma(2v_0)$ is the maximally rationally connected quotient of $M_4(Y)$. In particular, the MRC quotient of $M_4(Y)$ is birational to the  twisted intermediate Jacobian fibration $J$.   
\end{Cor}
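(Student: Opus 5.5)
The plan is to connect the Hilbert scheme $M_4(Y)$ to $\Ku(Y)$ through a projection functor, and then use the birational geometry of $\rM_\sigma(2v_0)$ established in the two preceding theorems.

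\emph{Step 1 (the projection functor).} Let $C\subset Y$ be a general rational quartic curve. It spans a $\bP^4$, and $Y\cap \bP^4$ is a cubic threefold $X_C$. I would take the ideal sheaf $I_{C/X_C}$, or the residual elliptic quintic, and project it into $\Ku(Y)$ using the left adjoint of $\Ku(Y)\hookrightarrow \Db(Y)$. A Chern character computation should show that this object has class $2v_0=2(\lambda_1+\lambda_2)$.

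\emph{Step 2 (semistability).} For $\sigma\in\Stab^\dag(\Ku(Y))$ in the region used for $F(Y)$ and $Z(Y)$ in \cite{LPZ:twistedcubics}, I would show that the projected object is $\sigma$-semistable. This gives a rational map
\[
\Psi\colon M_4(Y)\dashrightarrow \rM_\sigma(2v_0).
\]
I would try to argue by wall-crossing and vanishing of $\Hom$'s with the exceptional objects. I expect this to be the main obstacle: the class $2v_0$ is not primitive, and one has to rule out destabilizing subobjects of class $v_0$, which are the objects from lines, i.e. points of $F(Y)$.

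\emph{Step 3 (fibers are rationally connected).} Through the Lagrangian fibration $N\to(\bP^5)^\vee$, the map $\Psi$ becomes fiberwise over a hyperplane section $X_H$. It sends a quartic $C\subset X_H$ to a point of the twisted intermediate Jacobian of $X_H$, essentially via the Abel--Jacobi map for rational quartics. I would identify the fibers of this map with the curves of a fixed class $\Psi(C)$: these are linear systems, or projective spaces of extensions, hence rational.

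\emph{Step 4 (conclusion).} The target $N$, birational to $\widetilde{\rM(2v_0)}$, is hyperk\"ahler, so it carries a nonzero symplectic form and is not uniruled. A dominant map with rationally connected general fibers onto a non-uniruled variety is the MRC quotient, by Graber--Harris--Starr. Finally, the Lagrangian fibration theorem identifies $N$ birationally with $J$, which gives the last assertion.
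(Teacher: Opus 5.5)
\textbf{There is a genuine gap in Steps 1 and 3.} Your overall architecture is right: a rational map to $\rM_\sigma(2v_0)$ with rationally connected fibres, then non-uniruledness plus Graber--Harris--Starr. The trouble is that the object you attach to a quartic does not have class $2v_0$, and the fibres are not projective spaces.

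\emph{Step 1.} Let $i\colon X=Y\cap\bP^4\hookrightarrow Y$ and let $R\subset X$ be a rational normal quartic. Since $R$ is non-degenerate and $H^1(\cO_R)=0$, the sheaf $I_{R/X}(1)$ already lies in $\Ku(X)=\langle\cO_X,\cO_X(1)\rangle^\perp$. Riemann--Roch gives $\chi(I_{R/X}(1),I_{R/X}(1))=-7$, a primitive class with $8$-dimensional moduli. The class $2v_0$ comes instead from charge-two instanton bundles on $X$, whose class in $\Ku(X)$ is the non-primitive $2[I_{L/X}]$, with $\chi=-4$. Correspondingly, in $\Ku(Y)$ the projection of $I_{R/Y}(2)$ has square $14$, not $(2v_0)^2=8$.

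Projecting the ideal sheaf of the residual elliptic quintic $C'$ (linked to $R$ in $X$ by a cubic scroll) does not help either: for instance $\mathrm{pr}(i_*I_{C'/X}(2))$ has class divisible by $5$. The missing idea is the Serre correspondence. Take the rank-two instanton $E$ on $X$ with $0\to\cO_X\to E(1)\to I_{C'/X}(2)\to 0$. It is $\mathrm{pr}(i_*E)$, not a projected ideal sheaf, that gives a point of $\rM_\sigma(2v_0)$, so your Step 2 should be about the stability of that object. This is also where ``linear systems'' is correct: elliptic quintics with the same $E$ form an open subset of $\bP(H^0(E(1)))\cong\bP^5$.

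\emph{Step 3.} For rational quartics themselves, the fibres of $M_4(Y)\dashrightarrow\rM_\sigma(2v_0)$ have dimension $13-10=3$. They are the Abel--Jacobi fibres of rational quartics in $X_H$; since $4\equiv 1 \bmod 3$, these land in the twisted Jacobian. By a classical result of Iliev--Markushevich, the general such fibre is birational to the cubic threefold $X_H$ itself. This matches the remark in \S\ref{sec:nccurves} about the $8$-dimensional moduli spaces, which have exactly the numerical type of $I_{R/X}(1)$. So the fibre is rationally connected, being birational to a Fano threefold, but it is irrational by Clemens--Griffiths. Hence ``projective spaces, hence rational'' is false. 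You need a genuine rational-connectedness input, or you must run the argument on elliptic quintics and transport it through the linkage.

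In the survey the corollary is presented as an application of the preceding theorem. The map $R\mapsto (H,\mathrm{AJ}(R))$ sends $M_4(Y)$ to $J$ with rationally connected fibres, and $J$ is birational to $N$, hence to $\widetilde{\rM(2v_0)}$ and to $\rM_\sigma(2v_0)$. Your Step 4 is then exactly the right conclusion: the hyperk\"ahler manifold $N$ is not uniruled, and Graber--Harris--Starr identifies the MRC quotient.
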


Next, consider the connected component $M_d(Y)$ of the Hilbert scheme of rational curves of degree $d$ on $Y$ with $d \geq 5$. If $Y$ is very general, then de Jong and Starr showed in \cite{dJStarr} that if $d$ is odd, then $M_d(Y)$ has a generically non-degenerate $2$-form, while if $d$ is even this form has generically a $1$-dimensional kernel. We formulate the following question.

\begin{Ques}
Let $Y$ be a very general cubic fourfold. If $d \geq 5$ is odd, is there a rational map $M_d(Y) \to \rM_\sigma(v)$ for some $v \in \Knum(\Ku(Y))$?   
\end{Ques}

Answering this question may be useful to compute the Kodaira dimension of $M_d(Y)$, or to get some interesting subvariety of $ \rM_\sigma(v)$.

Another interesting viewpoint is to study the derived category of moduli spaces $\rM_\sigma(v)$ of stable objects in $\Ku(Y)$. Let us first consider the case of $F(Y)$. Assume that $F(Y)$ is birational to $S^{[2]}$, where $S$ is a K3 surface. Then by \cite{BKR} there is an equivalence 
$$\Db(F(Y)) \simeq \Db(S^{[2]}) \simeq \Db(S)^{[2]},$$
where $\Db(S)^{[2]}=\Db(S^2)^{\mathfrak{S}_2}$ is the invariant category with respect to the action of the involution on the factors of the product $S \times S$. More generally, define 
$\Ku(Y) \boxtimes \Ku(Y)$ as the subcategory of $\Db(Y \times Y)$ of objects $E$ such that 
$$\Hom^\bullet(F \boxtimes \cO_Y(i), E)=\Hom^\bullet(\cO_Y(i) \boxtimes F, E)=0 \quad \text{for every } F \in \Db(Y), i=0,1,2.$$
The involution on $Y \times Y$ swapping the two factors induces a $\mathfrak{S}_2$-action on $\Ku(Y) \boxtimes \Ku(Y)$, and we denote by $\Ku(Y)^{[2]}$ the associated invariant category. The result below proves a long standing conjecture by Galkin.

\begin{Thm}[Galkin's conjecture, \cite{KimoiSegal}] \label{thm:Galkin}
Let $Y$ be a cubic fourfold. Then there is an equivalence $$\Db(F(Y)) \simeq \Ku(Y)^{[2]}.$$   
\end{Thm}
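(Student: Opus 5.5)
Since $\Db(F(Y))$ and $\Ku(Y)^{[2]}$ are both invariant under deformation of $Y$ in a suitable sense, the plan is to build a functor defined uniformly in families of cubic fourfolds and check that it is an equivalence. The natural candidate is a Fourier--Mukai functor $\Phi\colon \Ku(Y)^{[2]} \to \Db(F(Y))$ coming from the universal family of lines. Let $L \subset F(Y)\times Y$ be the universal line. Each point $\ell\in F(Y)$ gives an object $P_\ell \in \Ku(Y)$: the projection into $\Ku(Y)$ of the ideal sheaf $I_\ell$, or its twist $I_\ell(1)$. Form the $\mathfrak{S}_2$-symmetric object $P_\ell\boxtimes P_\ell \in \Ku(Y)\boxtimes\Ku(Y)$ together with its canonical equivariant structure, possibly twisted by the sign character; this gives the kernel. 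This is modeled on the BKR description $\Db(S^{[2]})\simeq \Db(S)^{[2]}$, where the length-two subscheme $\xi=\{x,y\}$ corresponds to $\cO_x\boxtimes\cO_y\oplus\cO_y\boxtimes\cO_x$. Under the moduli interpretation $F(Y)\cong \rM_\sigma(\lambda_1+\lambda_2)$ from \cite{LPZ:twistedcubics}, the point $\ell$ plays the role of $\xi$. So on the moduli side I expect the correct kernel to be built from the universal family of stable objects, and I would aim to show that the resulting $\Phi$ respects the $\mathfrak{S}_2$-structures.

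The key steps, in order, are as follows.

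\begin{enumerate}
\item Verify that $\Ku(Y)\boxtimes\Ku(Y)$ is the base change $\Ku(Y)\otimes_k\Ku(Y)$ and is admissible in $\Db(Y\times Y)$. The invariant category is then smooth and proper (Proposition~\ref{prop:smpr_bc} together with standard results on invariants for finite groups in characteristic $0$). Its Serre functor is $[4]$ up to the sign of the action, which matches $\Db(F(Y))$.
\item Compute the Hochschild homology and the numerical $K$-group of $\Ku(Y)^{[2]}$ via the orbifold formula. Check that these agree with those of the hyperk\"ahler fourfold $F(Y)$, namely the $\Sym^2$ together with the twisted sector of $HH_\bullet(\Ku(Y))$, which is K3-like.
\item Prove full faithfulness using the Bondal--Orlov/Bridgeland criterion on the spanning class of points of $F(Y)$ for the right adjoint. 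This requires computing $\Ext^\bullet$ between the images of skyscrapers, that is, $\mathfrak{S}_2$-invariant parts of $\Ext^\bullet(P_\ell,P_{\ell'})^{\otimes 2}$ and their crossed versions. These are controlled by the description of $\Ext^\bullet(P_\ell,P_{\ell'})$ from the $F(Y)$-as-moduli picture: it is $\bC\oplus\bC^4[-1]\oplus\bC[-2]$ when $\ell=\ell'$ and vanishes otherwise.
\item Conclude essential surjectivity from full faithfulness. Either both categories are indecomposable CY4 with the same Serre functor, so an admissible subcategory with trivial orthogonal is everything, or the orthogonal has vanishing Hochschild homology and is therefore zero.
\end{enumerate}

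An alternative route that avoids computing on a general $Y$ is to work over the relative family of cubic fourfolds. First establish the equivalence on the divisor where $\Ku(Y)\simeq\Db(S)$ and $F(Y)\simeq S^{[2]}$, using BKR. Then spread it out: the locus where a relative Fourier--Mukai functor is an equivalence is open, and the moduli space is irreducible. The difficulty with this route is ensuring that the kernel one writes down globally actually restricts to the BKR kernel twisted by the relevant derived equivalences, and that $F(Y)$ is isomorphic (not merely birational) to $S^{[2]}$ on a dense set of special fibers.

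The main obstacle I expect is the construction of a global kernel for the obstruction-free symmetric square. Concretely, I need to show that the naive object $P_\ell\boxtimes P_\ell$ carries the correct equivariant structure, and to compute the mixed $\mathfrak{S}_2$-Ext groups precisely enough to get full faithfulness, including the vanishing in the "crossed" terms. My first attempt would be to reduce to the K3 locus and compare with the BKR computations there, using semicontinuity to extend the vanishing to all $Y$.
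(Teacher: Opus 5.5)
\textbf{There is a genuine gap: the kernel $P_\ell\boxtimes P_\ell$ is the wrong object, and step~3 is where this shows.} The premise of step~3 is false. For $\ell\neq\ell'$, the objects $P_\ell,P_{\ell'}$ are non-isomorphic $\sigma$-stable objects of the same class $v=\lambda_1+\lambda_2$ and phase, in a category with Serre functor $[2]$. So $\Hom=\Ext^2=0$, but $\chi(v,v)=-v^2=-2$ forces $\Ext^1(P_\ell,P_{\ell'})\cong\bC^2$. By K\"unneth, morphisms between the $\mathfrak{S}_2$-equivariant objects $P_\ell\boxtimes P_\ell$ and $P_{\ell'}\boxtimes P_{\ell'}$ are the invariants of $\Ext^\bullet(P_\ell,P_{\ell'})^{\otimes 2}$ under the Koszul-signed swap. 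This gives $\Lambda^2\bC^2[-2]=\bC[-2]\neq 0$, so the images of distinct skyscrapers are not orthogonal. For $\ell=\ell'$, the graded symmetric square of $\bC\oplus\bC^4[-1]\oplus\bC[-2]$ has dimensions $1,4,7,4,1$, not the $1,4,6,4,1$ of $\Lambda^\bullet\bC^4=\Ext^\bullet(\cO_\ell,\cO_\ell)$. Even the Euler characteristics differ ($1$ versus $0$), so no choice of linearization, sign twist, or dualization (whichever adjoint you use) can fix this. Hence the adjoint $\Db(F(Y))\to\Ku(Y)^{[2]}$ fails the Bondal--Orlov criterion, and $\Phi$ is not an equivalence.

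The underlying error is in the BKR analogy. BKR sends $\cO_\xi$, with $\xi=\{x,y\}$, to $\cO_x\boxtimes\cO_y\oplus\cO_y\boxtimes\cO_x$, which is built from point objects of square $0$. Your $P_\ell$ is instead a single object of square $2$. It is the analogue of the ideal sheaf $I_\xi$, whose moduli space is $S^{[2]}$, and $I_\xi\boxtimes I_\xi$ is not the BKR image of $\cO_\xi$. For very general $Y$, $\Knum(\Ku(Y))=A_2$ has no isotropic classes, so $\Ku(Y)$ contains no point-like objects from which a BKR-type kernel could be assembled. Passing from the moduli space $F(Y)\cong\rM_\sigma(\lambda_1+\lambda_2)$ to a ``Hilbert square'' of $\Ku(Y)$ therefore requires a genuinely additional equivalence; it does not follow formally from the universal family. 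Separately, the second option in step~4 is invalid: vanishing Hochschild homology does not force a semiorthogonal component to vanish, because phantom categories exist. Only the Serre-functor/indecomposability argument works there.

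\textbf{Comparison with the paper.} The survey gives no proof of its own. It cites \cite{KimoiSegal} and sketches the alternative argument of \cite{BottHuy}, which assumes $F(Y)$ is generic with a rational Lagrangian fibration. That sketch shows where your missing step lives. There $\Ku(Y)\simeq\Db(S,\alpha)$ for a twisted K3 surface, and a twisted BKR equivalence handles the symmetric square of $\Db(S,\alpha)$. The substantial extra work is the study of derived categories of moduli spaces of twisted rank-one sheaves compactifying relative twisted Jacobians of curves in a linear system on $S$; $F(Y)$ is birational to such a moduli space of $\alpha$-twisted sheaves.

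\textbf{The deformation route has the same gap.} It would need a relative kernel over an open set of cubic fourfolds, restricting on the K3 loci to BKR composed with such an extra equivalence. That kernel would also have to deform off the Noether--Lefschetz divisors, in directions where $\Ku(Y)$ stops being geometric. This requires matching the induced first-order deformations of $\Ku(Y)^{[2]}$ and $F(Y)$. Openness of the equivalence locus only helps once such a kernel exists, and you have no candidate for it.
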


In \cite{BottHuy} Bottini and Huybrechts provided an alternative proof of Theorem~\ref{thm:Galkin} assuming $F(Y)$ has a rational Lagrangian fibration and is generic with this property. In this case, there is a twisted K3 surface realizing an equivalence $\Ku(Y) \simeq \Db(S, \alpha)$ and $F(Y)$ is birational to a moduli space of $\alpha$-twisted sheaves on $S$. The argument relies on proving a twisted version of BKR-equivalence, as well as studying the derived category of moduli spaces of twisted stable sheaves of rank $1$ compactifying relative twisted Jacobian fibrations of curves in a linear system on $S$. Motivated by the above success, we formulate the following naive problem (see \cite[Speculation 1.1]{Zhang}).

\begin{Ques} \label{quest:derivedcatmoduli}
Let $Y$ be a cubic fourfold. Let $\rM_\sigma(v)$ be a moduli space of stable objects in $\Ku(Y)$ of dimension $2n$ with a rational Lagrangian fibration. Is there a version of Galkin's conjecture for $\rM_\sigma(v)$ and $\Ku(Y)^{[n]}$?
\end{Ques}

In the paper in preparation \cite{MoritzSaket} the authors answer to Question~\ref{quest:derivedcatmoduli} assuming $n-1$ is square free and $\rM_\sigma(v)$ has Picard rank two. In the special case of the LLSvS eightfold, they obtain the following result.

\begin{Thm}[\cite{MoritzSaket}, Corollary 8.3]
Let $Y$ be a cubic fourfold with $\rk\Knum(\Ku(Y))=3$. Assume that $Z(Y)$ admits a
rational Lagrangian fibration. Then there is a Brauer class $\theta \in \Br(Z(Y))$ and an equivalence
$$\Db(Z(Y), 4\theta) \simeq \Ku(Y)^{[4]}.$$
\end{Thm}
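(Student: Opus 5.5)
The plan follows the Bottini--Huybrechts strategy for Theorem~\ref{thm:Galkin}, adapted to the eightfold $Z(Y)=\rM_\sigma(2\lambda_1+\lambda_2)$, which has $v^2=6$, $n=4$ and $n-1=3$ square free. First, I would use $\rk\Knum(\Ku(Y))=3$ together with the existence of a rational Lagrangian fibration on $Z(Y)$ to find a class $w\in\Knum(\Ku(Y))$ with $w^2=0$. By the derived Torelli-type results for cubic fourfolds (Addington--Thomas, Huybrechts), an isotropic class realizes $\Ku(Y)\simeq\Db(S,\alpha)$ for a twisted K3 surface $(S,\alpha)$. Transporting $\sigma$ through this equivalence and moving it inside $\Stab^\dag$ to the Gieseker chamber for $v=2\lambda_1+\lambda_2$ (wall crossing only changes $Z(Y)$ birationally, and derived categories of birational HK manifolds are expected to agree, or one works directly on the birational model), $Z(Y)$ becomes birational to a moduli space $M_{S,\alpha}(0,H,\chi)$ of rank-one twisted sheaves supported on curves in a linear system $|H|$ on $S$ with $H^2=6$. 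In other words, it is birational to a compactified relative twisted Jacobian $\overline{J}\to|H|\cong\bP^4$.

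Second, I would show that $\Db(\overline J,\theta')$ is equivalent to $\Db(S^{[4]},\beta)$ for suitable Brauer classes. Both spaces are Lagrangian fibered over $\bP^4$: $S^{[4]}$ via the degree-$4$ relative Hilbert scheme of points on the universal curve, and $\overline J$ via its degree-$\chi$ compactified Jacobian. Fiberwise, a Fourier--Mukai transform with a twisted Poincar\'e sheaf relates them, as in Arinkin's autoduality for compactified Jacobians of integral planar curves. The relevant degree shift is governed by $\gcd$ conditions on $n-1=3$, and square-freeness guarantees that the twisted relative Poincar\'e kernel exists globally and is an equivalence after twisting by a Brauer class whose order divides $n-1$. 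I would then extend the equivalence over the locus of non-integral curves using codimension estimates and Arinkin's result, which requires controlling reducible curves in $|H|$. Picard rank two, which follows from $\rk\Knum=3$, ensures that $|H|$ has only integral curves away from codimension $\ge2$.

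Third, I would prove a twisted BKR theorem $\Db(S^{[4]},\beta)\simeq\Db(S,\alpha)^{[4]}=\Db(S^4,\alpha^{\boxtimes4})^{\mathfrak S_4}$, by running Haiman's isospectral Hilbert scheme argument with the Azumaya algebras pulled back. Under $\Db(S,\alpha)\simeq\Ku(Y)$, the right-hand side becomes $\Ku(Y)^{[4]}$, defined via $\Ku(Y)^{\boxtimes4}\subset\Db(Y^4)$. Composing the equivalences from the three steps and tracking the Brauer classes gives $\Db(Z(Y),4\theta)\simeq\Ku(Y)^{[4]}$, where the factor $4$ records the composite of the fiberwise Poincar\'e twist and the BKR twist.

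The main obstacle is the second step: constructing a global twisted Poincar\'e kernel and proving the equivalence over the singular curves of $|H|$, while identifying the resulting Brauer class precisely as $4\theta$. I would first attempt this via Arinkin's autoduality on the integral locus, followed by a reflexive extension argument.
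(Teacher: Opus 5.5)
Your outline is the route the survey describes for Bottini--Huybrechts' proof of Theorem~\ref{thm:Galkin}: an isotropic class in $v^\perp$, then $\Ku(Y)\simeq\Db(S,\alpha)$, then compactified twisted Jacobians, then twisted BKR. However, the two steps that carry the content are not established.

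\textbf{The birational comparisons.} Both $Z(Y)\dashrightarrow M_{S,\alpha}(0,H,\chi)$ and $S^{[4]}\dashrightarrow\overline{J}^{\,4}$ are only birational. In particular $S^{[4]}$ is \emph{not} Lagrangian fibred over $|H|$. A length-$4$ subscheme spanning only a plane in $\bP^4$ lies on a pencil of curves of $|H|$, so the relative Hilbert scheme of the universal curve gives only a birational correspondence. Saying that derived categories of birational hyper-K\"ahler manifolds are ``expected to agree'' invokes a conjecture. ``Working on the birational model'' does not prove a statement about $Z(Y)$ itself. In dimension $8$ these birational maps are not just Mukai flops, as they are in the fourfold case. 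You therefore need an actual theorem giving \emph{twisted} derived equivalences between birational K3$^{[n]}$-type manifolds, compatible with the Brauer classes, for instance recent results on the D-equivalence conjecture for K3$^{[n]}$-type. You must also check that it applies with the twists you carry.

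\textbf{The Brauer classes.} More seriously, the Brauer-class bookkeeping, which is the substance of the statement, is left as ``suitable classes''. Step 3 applies only if the class on $S^{[4]}$ produced in Step 2 is exactly the class induced by $\alpha$ in the twisted BKR equivalence. You never define $\theta$, so ``the factor $4$ records the composite of the two twists'' has no content.

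The natural mechanism is as follows. All spaces involved are, birationally, twisted forms of the relative compactified Jacobian $\overline{J}^{\,0}\to|H|$, which is autodual by Arinkin. $S^{[4]}$ is birational to $\overline{J}^{\,4}$, whose Tate--Shafarevich class is $4$ times that of $\overline{J}^{\,1}$. The moduli space of $\alpha$-twisted sheaves is another such torsor, with class determined by $\alpha$ and $\chi$. The Poincar\'e sheaf exchanges Tate--Shafarevich twists with Brauer twists. To conclude, you must compute these torsor classes and prove that the induced Brauer class on $S^{[4]}$ is the BKR class of $\alpha$. That is the step your proposal skips.

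\textbf{Where the hypotheses enter.} You also misplace the hypotheses. The condition $\rk\Knum(\Ku(Y))=3$ forces $\rho(S)=1$. Writing $v=(0,D,\chi)$ with $D=kH_0$ gives $k^2H_0^2=2(n-1)$, so square-freeness of $n-1=3$ forces $D$ to be primitive. Hence \emph{every} curve in $|H|$ is integral with planar singularities, and Arinkin's theory applies over all of $|H|$. No extension over non-integral curves is needed, and square-freeness is not a gcd condition making the Poincar\'e kernel exist.
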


\subsubsection{Kummer-type HK manifolds}

The construction of the HK variety $\Kum^n(A)$ associated to an abelian surface has the following generalization due to Yoshioka \cite{Yoshioka:Abelian} using moduli spaces of stable sheaves on $A$. Denote by $\rH^\ev(A, \bZ)$ the integral even cohomology of $A$. Endowed with the Mukai pairing it is isomophic to the lattice $U^{\oplus 4}$, and carries a natural weight $2$ Hodge structure. Let $v$ be a primitive Hodge class in $\rH^\ev(A, \bZ)$ such that $v^2 \geq 6$. Let $\rM_H(v)$ be the moduli space of Gieseker stable sheaves with respect to a fixed polarization $H$ having Chern character $v$. As in the case of K3 surfaces, by Mukai's result, $\rM_H(v)$ is a smooth projective variety of dimension $v^2+2$. Consider the morphism
$$a_v \colon \rM_H(v) \to A \times \hat{A}$$
defined by a translation of $\hat{\det} \times \det$, where $\det$ associates to a sheaf its determinant. By \cite{Yoshioka:Abelian} we have that $a_v$ is the Albanese morphism and the kernel $K_v(A):=a_v^{-1}(0)$ is a HK variety of Kummer-type. A similar result holds for moduli spaces of stable objects in $\Db(A)$. 

Nevertheless, generalized Kummer varieties associated to abelian surfaces have $3$-dimensional moduli. Since their second Betti number is $7$, polarized HK manifolds of Kummer-type have $4$-dimensional moduli. So far, there are no explicit examples of locally complete families of polarized HK manifolds of Kummer type. 

In \cite{BPPZ} we address this problem by constructing Kummer-type polarized HK manifolds out of moduli spaces of stable objects in the noncommutative abelian surfaces introduced in Example~\ref{ex:ncas}. We summarize our results as follows.

\begin{Thm}[\cite{BPPZ}] \label{thm:BPPZ}
Let $A$ be an abelian surface, and let $L \subset \rH^\ev(A, \bZ)$ be a primitive positive definite rank two lattice of Hodge classes.
\begin{enumerate}
\item The construction in Example~\ref{ex:ncas} (for a choice of $\Lambda$ determined by $L$) defines a noncommutative CY$2$ variety $\cA$ over a four-dimensional base scheme $B$ with a quasi-finite map to the global period domain of $L$ and a base point $b_0$ such that $\cA_{b_0} \simeq \Db(A)$.
\item There is a stability condition $\sigma$ on $\cA$ over $B$ such that its central charge is given by pairing with classes in $L$.
\item For every $v \in L$ and every $b \in B$ the moduli space $\rM_{\sigma_b}(v)$ of $\sigma_b$-stable objects in $\cA_b$ is non-empty.
\item For every $b \in B$ the fiber of the Albanese morphism of $\rM_{\sigma_b}(v)$ is a HK manifold of dimension $v^2-2$ of Kummer-type.
\end{enumerate}
\end{Thm}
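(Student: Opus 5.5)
The plan follows the four items of Theorem~\ref{thm:BPPZ} in order. Throughout, the category $\cA$ is treated as a $\bZ/2$-invariant category of a deformation of a Kummer K3 surface, and every statement about $\cA$ is reduced either to K3 surfaces or to the central fiber $\Db(A)$.

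\textbf{Step 1: constructing the family.} From $L$ we build the lattice $\Lambda$ of signature $(1,15)$: it contains the sixteen exceptional $(-2)$-classes of the Kummer surface $X$ together with the image of $L$ under the BKR/Elagin identification $\rH^*(A)\to \rH^*(X)$. Let $B$ be a (local, then spread out) base of the moduli space of $\Lambda$-polarized K3 surfaces through the point $[X]$. It is $4$-dimensional, since $20-16=4$, and its period map is quasi-finite to the period domain of $\Lambda^\perp$, which matches that of $L$. We then deform the induced $\bZ/2$-action on $\Dperf(X)$, whose invariants give $\Dperf(A)$, along the family $T\to B$. Infinitesimally, the obstruction lies in the anti-invariant part of Hochschild cohomology, and we would show it vanishes by checking that the action on $\rH^*$ preserves $\Lambda$. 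Algebraization then comes from the relative Fourier--Mukai description of the involution. The invariant category $\cA$ is smooth and proper by base change, and it is CY$2$ by \cite[Lemma 6.5]{BO:survey}.

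\textbf{Steps 2 and 3: stability and non-emptiness.} On the fibers $T_b$ we use Bridgeland stability conditions whose central charge pairs with classes in $L\subset \Lambda$. These are chosen fixed by the $\bZ/2$-action, and by \cite{MMS:inducing} they induce stability conditions on the fibers $\cA_b$. Stability conditions on K3 families satisfy Definition~\ref{definition-stability-families} by \cite{BLMNPS:families}. The conditions descend to the invariant category: openness and HN integration transfer via the forgetful and inflation functors, and boundedness follows because invariant stable objects map to semistable objects on $T_b$. This gives $\sigma$ over $B$. For Step 3, Theorem~\ref{thm:moduli} makes $\rM_\sigma(v)\to B$ proper. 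At $b_0$ non-emptiness is Yoshioka's result for $\Db(A)$. Since $\rM_\sigma^{\st}(v)$ is smooth of the expected dimension $v^2+2$ (the CY$2$ deformation theory is unobstructed, by a Mukai-type argument) and $B$ is irreducible, the image is open and closed, so every fiber is non-empty; for non-generic $b$ one deforms $\sigma_b$ slightly.

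\textbf{Step 4: the Albanese fiber.} We need a relative Albanese map $\rM_{\sigma}(v)\to \mathcal{T}$ to a family of abelian fourfolds deforming $A\times\hat A$. To get it, we realize $\Hom^1(E,E)$ through an $\mathrm{HH}^1(\cA_b)$-type action. Concretely, $\mathrm{HH}_1(\cA_b)\neq 0$ for all $b$, because its dimension is constant in the family; this is the noncommutative replacement of translations and tensoring by $\Pic^0$. The corresponding group scheme $\mathcal G_b$ acts on $\rM$, and we take the quotient map. At $b_0$ this recovers Yoshioka's $a_v$, whose fiber is $K_v(A)$ of Kummer-type and dimension $v^2-2$. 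Smoothness and properness of the fibers in the family then imply they are deformation equivalent, hence HK of Kummer-type.

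\textbf{Main obstacle.} We expect the hard parts to be deforming the $\bZ/2$-action and constructing the relative Albanese map in the noncommutative fibers. For the latter, we would first try to show that the group of autoequivalences acting trivially on $\rH^*$ forms a proper smooth group scheme over $B$ deforming $A\times\hat A$.
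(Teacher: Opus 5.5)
\textbf{Main gap: Step 1 builds the wrong family.} This is precisely the point where Example~\ref{ex:ncas} warns that working directly with $X$ does not work.

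First, the lattice you describe cannot exist. The classes $E_1,\dots,E_{16}$ span a negative definite lattice of rank $16$, and such a lattice cannot sit inside a rank-$16$ lattice of signature $(1,15)$. Moreover, $L$ is positive definite of rank two, while the Hodge classes in $\rH^2$ form the hyperbolic lattice $\mathrm{NS}$. So the image of $L$ does not lie in $\rH^2(X,\bZ)$; it has nonzero $\rH^0\oplus\rH^4$-components in the Mukai lattice $\widetilde{\rH}(X,\bZ)$.

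Second, the underlying idea fails. The involution $\iota$ of $\Dperf(X)$ with invariants $\Dperf(A)$ is not geometric: it exchanges the two simple objects supported on each exceptional curve (e.g.\ $\cO_{E_p}$ and $\cO_{E_p}(-1)[1]$). Hence its anti-invariant lattice $N\subset\widetilde{\rH}(X,\bZ)$ is negative definite of rank $16$, spanned over $\bQ$ by classes $(0,2E_p,\pm1)$. A deformation can carry $\iota$ along only if $\iota^H$ fixes the deformed period, i.e.\ only if $N$ stays algebraic. The condition ``$\iota^H$ preserves $\Lambda$'' is not the relevant criterion.

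Now suppose you deform $X$ as a K3 surface, so that $\rH^0\oplus\rH^4$ of $X$ stays algebraic. Then keeping $N$ algebraic forces every $E_p$ to remain algebraic. By Nikulin the fibers stay Kummer surfaces $\mathrm{Km}(A_b)$, and the invariant categories are just $\Db(A_b)$. With a polarization this locus is at most $3$-dimensional. It has codimension one in the $4$-dimensional period domain of $L$ (recall $L^\perp\subset\rH^{\ev}(A,\bZ)$ has signature $(2,4)$). So you never reach a genuinely noncommutative fiber, item (1) fails, and with it the local completeness in Corollary~\ref{cor:kummerHK}.

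\textbf{The missing idea: pass to a Fourier--Mukai partner of $X$.} What must stay algebraic is the saturation of $N\oplus L$ (with $L$ meaning its image in the $\iota$-invariant part) inside $\widetilde{\rH}(X,\bZ)$; this lattice has signature $(2,16)$. Split off a hyperbolic plane from it and realize that plane as $\rH^0\oplus\rH^4$ of a Fourier--Mukai partner $T_{s_0}$ of $X$ (Mukai--Orlov). What remains is a rank-$16$ complement of signature $(1,15)$. This is how one gets a lattice $\Lambda$ as in Example~\ref{ex:ncas}, with a base point that is only derived equivalent to $X$. Deforming $T_{s_0}$ as a $\Lambda$-polarized K3 surface keeps $N$ algebraic, so the transported involution deforms. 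Meanwhile the curves $E_p$ do not survive, so the generic $\cA_b$ is not the derived category of an abelian surface.

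\textbf{Consequences for Steps 2 and 3.} Once $\iota$ is non-geometric, both steps need real input you have not supplied.
\begin{enumerate}
\item In Step 2, relative tilt-stability conditions from \cite{BLMNPS:families} can have $\iota$-invariant central charge, but nothing makes them $\iota$-fixed. Indeed $\iota$ does not even preserve the class of skyscraper sheaves, since the isotropic class $(0,0,1)$ of $T_{s_0}$ must have a nonzero $N$-component. So producing a $\bZ/2$-fixed family over $B$, as required by \cite{MMS:inducing}, needs an argument.
\item In Step 3, openness of the image requires $\rM^{\st}_\sigma(v)\to B$ to be smooth of relative dimension $v^2+2$, i.e.\ total dimension $v^2+6$, not $v^2+2$. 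Fiberwise Mukai unobstructedness does not give this. With obstruction space $\Ext^2(E,E)\cong\bC$, the naive count only guarantees an image of dimension $\geq 3$. What you actually need is that the obstruction to deforming $E$ along $B$ vanishes; this obstruction is essentially the pairing of the Kodaira--Spencer class with $v(E)$, so it vanishes because $v\in L$ remains a Hodge class over all of $B$.
\end{enumerate}
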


\begin{Cor} \label{cor:kummerHK}
Let $(K_v(A), h)$ be a generalized Kummer variety with polarization $h$. Then there is an open subset of the moduli space of polarized HK manifolds of Kummer-type containing $(K_v(A), h)$, whose closed points correspond to fibers of Albanese morphisms of some moduli space $\rM_{\sigma_b}(v)$ of stable objects in $\cA_b$ as in Theorem~\ref{thm:BPPZ}.    
\end{Cor}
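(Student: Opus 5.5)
The plan is to deduce Corollary~\ref{cor:kummerHK} from Theorem~\ref{thm:BPPZ} by a period-map argument. Start from $(K_v(A),h)$ and let $L\subset \rH^\ev(A,\bZ)$ be the rank two lattice spanned by $v$ and a class $w$ that induces the polarization $h$ via the Mukai/Yoshioka isometry $\theta_v\colon v^\perp \to \rH^2(K_v(A),\bZ)$. Concretely, choose $w \in v^\perp$ to be a Hodge class with $\theta_v(w)=h$, and then replace $L$ by its saturation, which is primitive and positive definite because $v^2>0$ and $h^2>0$. Apply Theorem~\ref{thm:BPPZ} to this $L$ to obtain the family $\cA$ over the four-dimensional base $B$, the base point $b_0$ with $\cA_{b_0}\simeq\Db(A)$, and the stability condition $\sigma$ over $B$.

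The first step is to run the relative moduli construction. By Theorem~\ref{thm:moduli}, applied to $\sigma$ over $B$, we get a relative moduli space $\rM_\sigma(v)\to B$, proper after shrinking to a neighbourhood where $\sigma_b$ is $v$-generic. Taking the relative Albanese fiber produces a family $\mathcal K\to B$. By Theorem~\ref{thm:BPPZ}(3),(4), each fiber is a nonempty HK manifold of Kummer type of dimension $v^2-2$. At $b_0$, choosing $\sigma_{b_0}$ in the Gieseker chamber, or using wall-crossing invariance of the Albanese fiber up to birational or isomorphic models, identifies $\mathcal K_{b_0}$ with $K_v(A)$. Next, the class $w$ induces a relative line bundle on $\mathcal K/B$ through the determinant or Donaldson--Mukai construction. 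It restricts to $h$ at $b_0$, which gives a family of polarized HK manifolds $(\mathcal K_b,h_b)$.

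The second step is to show that this family fills an open set of the moduli space. Consider the classifying morphism from $B$, or from an étale neighbourhood of $b_0$, to the moduli space of polarized Kummer-type HK manifolds. By the Mukai isometry in families, the period of $(\mathcal K_b,h_b)$ is determined by the Hodge structure on $v^\perp$ in $\rH^\ev$ of $\cA_b$. Via $\theta_v$ this Hodge structure matches the period of the noncommutative surface in the period domain of $L$. By Theorem~\ref{thm:BPPZ}(1), the map from $B$ to that period domain is quasi-finite. Both domains are four-dimensional, and the polarized period domain of Kummer type agrees with the period domain of $L$ through $\theta_v$. So the composite to the polarized Kummer-type period domain is quasi-finite between four-dimensional spaces, hence dominant and open near $b_0$ by dimension count. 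Local Torelli for HK manifolds then shows that the image in the moduli space contains an open neighbourhood of $(K_v(A),h)$.

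The main obstacle is this compatibility: one must show that the relative Mukai homomorphism $\theta_{v,b}$ exists over $B$ and is an isometry of Hodge structures for every $b$, not just at $b_0$. This needs a universal family, or a twisted one, together with the identification of the Albanese fiber's $\rH^2$. I would handle it by deformation from $b_0$, using local constancy of the classes in $L$ across $B$ and parallel transport of $\theta_v$.
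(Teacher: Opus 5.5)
Your overall route is the natural one: take $L$ to be the saturation of $\langle v,\theta_v^{-1}(h)\rangle$, form the family of Albanese fibers over $B$, identify the polarized Kummer-type period domain with the period domain of $L$ via $\theta_v$, and conclude from quasi-finiteness of the period map, the dimension count and local Torelli. The paper gives no proof of this corollary, so the comparison is with this implicit deduction from Theorem~\ref{thm:BPPZ}.

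There is, however, a genuine gap at the central fiber. You are not free to place $\sigma_{b_0}$ in the Gieseker chamber. $\sigma_{b_0}$ is the fiber of the stability condition supplied by Theorem~\ref{thm:BPPZ}(2). That condition has a single central charge $Z=(\Omega,-)$ with $\Omega\in L\otimes\bC$, and a large-volume charge $-(e^{\beta+i\omega},-)$ need not be of this form. Your fallback, wall-crossing invariance ``up to birational models'', does not suffice either. A birational model of $K_v(A)$ is in general a different point of the moduli space of polarized HK manifolds, and $\theta_v(w)$ need not be ample on it. Relatedly, you never show that the determinant class $h_b=\theta_{v,b}(w)$ is ample on $\mathcal K_{b_0}$ or on any $\mathcal K_b$. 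So you have not actually produced a family of \emph{polarized} HK manifolds passing through $(K_v(A),h)$.

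The missing input is the Bayer--Macr\`i positivity lemma \cite{BM:projectivity}, together with its abelian-surface counterpart due to Yoshioka, applied fiberwise. It says that $\ell_{\sigma_b}=\theta_{v,b}\bigl(\Im(\Omega/(-Z(v)))\bigr)$ is ample on the Albanese fiber $\mathcal K_b$.
\begin{itemize}
\item Because $\Omega\in L\otimes\bC$ and $v\in L$, the vector $\Im(\Omega/(-Z(v)))$ lies in $L_\bR\cap v^\perp=\bR w$, and it does not depend on $b$. Hence $\ell_{\sigma_b}=c\,h_b$ for a single constant $c\neq 0$.
\item To see $c>0$, compare at $b_0$: $h$ is ample on the birational model $K_v(A)$, so $h$ and $\ell_{\sigma_{b_0}}$ lie in the same component of the positive cone.
\item Consequently $h_b$ is ample for every $b\in B$. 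This is exactly where your choice $w\in L$ is used.
\item At $b_0$, $\mathcal K_{b_0}$ and $K_v(A)$ are related by a wall-crossing birational map compatible with $\theta_v$, and $h$ is ample on both. Birational maps of HK manifolds are isomorphisms in codimension one, so this map identifies the section rings of $h$. This gives $(\mathcal K_{b_0},h_{b_0})\cong(K_v(A),h)$.
\end{itemize}
With this inserted, the rest of your argument goes through: period compatibility by parallel transport from $b_0$, openness of the quasi-finite period map, and local Torelli.

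One smaller point: shrinking $B$ to a $v$-generic locus is unnecessary, since Theorem~\ref{thm:BPPZ}(4) already holds for every $b\in B$. Doing so would also a priori risk discarding $b_0$.
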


The construction in Thereom~\ref{thm:BPPZ} could be useful to recover OG6-type polarized HK manifolds. In this case, the OG6 examples have codimension-two in the moduli space. Answering the following question would describe codimension-one loci in the moduli space of polarized OG6-type HK manifolds.

\begin{Ques}
Using the notation from Theorem~\ref{thm:BPPZ}, let $v_0 \in L$ with $v_0^2=2$. Is there a symplectic resolution of the singular moduli space $\rM_{\sigma_b}(2v_0)$ such that the resolution of the fiber of the Albanese   morphism is a HK variety of OG6-type?
\end{Ques}

More generally, it would be interesting to produce new examples of noncommutative CY$2$ varieties with stability conditions: their moduli spaces could be source of families of HK manifolds. Progress in this direction are obtained in \cite{MPS2} by constructing a deformation of the derived category of a K3 surface in the same spirit as \cite{MaMeh}.

On the other hand, noncommutative CY$1$ varieties with stability conditions are equivalent to the derived category of an elliptic curve by \cite{Sung}. In the opposite direction, seeking for a classification result in the CY$2$ case, we formulate the following question.

\begin{Ques}
Let $\cC$ be a noncommutative CY$2$ variety with a stability condition. Is $\cC$ deformation equivalent to the derived category of a K3 or abelian surface?    
\end{Ques}

\subsection{Nocommutative curves} \label{sec:nccurves}

In Example~\ref{ex:primeFano3} we have introduced the noncommutative smooth and proper variety $\Ku(X)$ associated to a Fano threefold $X$ with Picard rank one and index one or two. By \cite{BLMS:Kuznetsov, BLMNPS:families} numerical stability conditions exist on $\Ku(X)$, making possible to study moduli spaces of semistable objects. In fact, small dimensional moduli spaces have been directly studied and related to Hilbert schemes of rational curves such as lines in $X$, leading to interesting applications in classical geometry and categorical Torelli problems. See \cite{PS:survey} for a survey on this topic.

A natural question is then to understand the geometry of moduli spaces of semistable objects, e.g.\ when they are non-empty. If $X$ has index $2$ and degree $4$ or index $1$, degree $12$, $16$, $18$, then $\Ku(X) \simeq \Db(C)$ where $C$ is a curve of genus $\geq 2$. In this case, moduli spaces of $\sigma$-semistable objects are moduli spaces of slope semistable sheaves on $C$ and  the non-emptiness is well-known. 

In the other cases, the main difficulty is that $\Ku(X)$ is not equivalent to the derived category of a smooth projective variety, thus results on moduli spaces cannot be deduced by deformation to the geometric locus.

In \cite{PPZ_enriques} we develop a strategy to show the non-emptiness of moduli spaces of semistable objects in Enriques categories, which applies to the Kuznetsov component of a quartic double solid (index $2$ and degree $2$) and of a GM threefold (index $1$ and degree $10$). For interesting results on moduli spaces of stable objects in Enriques categories and relation with geometric constructions we suggest \cite{Ziqi}.

Next we show the following non-emptiness result, which provides a unified argument for all cases. Below $\sigma$ is a stability condition  constructed in \cite{BLMS:Kuznetsov} (or more generally a Serre-invariant stability condition, see \cite{PY, PR, FP}).

\begin{Thm}[\cite{LPZ:higher}] \label{thm:Fanononemptymoduli}
Let X be a prime Fano threefold with index $1$, degree $10 \leq d \leq  18$, or index $2$, degree  $d \leq 4$. For every non-zero character $v \in \Knum(\Ku(X))$, the moduli space $\rM_\sigma(v)$ is non-empty.    
\end{Thm}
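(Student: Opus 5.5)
The plan is to reduce non-emptiness for an arbitrary non-zero class $v \in \Knum(\Ku(X))$ to a finite list of small classes, and then to produce objects of arbitrary class by gluing. For all the listed $X$ the numerical Grothendieck group $\Knum(\Ku(X))$ has rank two. The Euler form $\chi$ is a non-symmetric pairing, and its symmetrization is negative definite up to sign. Serre-invariant stability conditions $\sigma$ are unique up to the $\widetilde{\mathrm{GL}}^+_2(\bR)$-action (by \cite{PY, FP}). Hence $\rM_\sigma(v)$ depends only on the $\widetilde{\mathrm{GL}}^+_2(\bR)$-orbit, and we may use the Serre functor $S_{\Ku(X)}$, which acts on $\Knum$ and preserves $\sigma$ up to this action, together with the shift $[1]$ to move classes around. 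In the cases where $\Ku(X) \simeq \Db(C)$ (index $2$, degree $4$; index $1$, degrees $12,16,18$), the statement is classical slope stability on $C$. The real content therefore lies in the remaining cases: index $2$, $d=1,2,3$, and index $1$, $d=10,14$. In degree $14$ one can reduce to the cubic threefold via Kuznetsov's equivalence.

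\textbf{Step 1: base cases.} Fix a basis $\{v_1, v_2\}$ of $\Knum(\Ku(X))$. First I show that $\rM_\sigma(v)$ is non-empty for the classes of minimal $|\chi(v,v)|$. These are represented by explicit objects, e.g.\ the projections of ideal sheaves of lines $I_\ell$ (class $v_1$) and their Serre images or shifts. Their stability is known from the small-dimensional analyses in \cite{BLMS:Kuznetsov, PY}. Then I use the $S_{\Ku(X)}$- and $[1]$-action to check that every primitive class lies in the orbit of ``small'' classes relative to the phase ordering.

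\textbf{Step 2: induction on $v$.} For general $v$, write $v = v' + v''$ with $Z(v')$, $Z(v'')$ adjacent in phase (a Farey-type decomposition in the rank-two lattice, so that no lattice class has central charge strictly inside the parallelogram). By induction there are stable $E' \in \rM_\sigma(v')$ and $E'' \in \rM_\sigma(v'')$. Since $\phi(E') < \phi(E'') < \phi(E')+1$ and $\sigma$ is Serre-invariant, one gets $\Hom(E'',E')=0$ and $\Ext^{2}(E'',E') = \Hom(E', S E''[-2])^\vee = 0$, because the phases of $SE''[-2]$ lie below $\phi(E')$ (the phase of $S$ is less than $2$, a key inequality for Serre-invariant stability on these categories). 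So $\ext^1(E'',E') = -\chi(v'',v') > 0$, which I compute from the explicit Euler form. A non-split extension $E' \to E \to E''$ then has class $v$. It is $\sigma$-stable by the adjacency of phases: any destabilizing subobject would have a class strictly between, which is impossible by the lattice choice. Alternatively, it is stable after a small deformation of $\sigma$ together with the uniqueness of Serre-invariant $\sigma$. Non-primitive $v = m v_0$ are handled by taking direct sums of stable objects of class $v_0$, giving semistable objects.

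The main obstacle is the vanishing $\Ext^2(E'',E')=0$ together with the positivity of $\chi(v'',v')$ for every adjacent pair. This needs a uniform control of the homological dimension of the heart for Serre-invariant $\sigma$: every heart object $A$ satisfies $\Ext^{\geq 2}(A,B)$ small or zero. This I would prove using the relation $S_{\Ku(X)}^k \simeq [m]$ (e.g.\ $S^3=[5]$ for cubics) to bound phases of $S$. If the direct-extension stability argument fails, I would instead use wall-crossing in the rank-two lattice, showing that the stable locus is non-empty because the expected dimension $1-\chi(v,v) > 0$.
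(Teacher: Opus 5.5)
Your overall strategy is the paper's. You split a primitive $v=v'+v''$ with no lattice points in the parallelogram, and take a non-split extension $E'\to E\to E''$ of stable objects with $\phi(E')<\phi(E'')<\phi(E')+1$. You get stability of $E$ from the parallelogram, with the vertex $v''$ excluded by non-splitness, and you reduce to finitely many base classes. However, two points in the proposal do not hold as written.

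\textbf{The step you single out as the main obstacle is false, and it is not needed.} From $\phi(S_{\Ku(X)}E'')<\phi(E'')+2$ you only get $\phi(S_{\Ku(X)}E''[-2])<\phi(E'')$, not $<\phi(E')$. In fact $\Ext^2(E'',E')$ can be non-zero. On a cubic threefold $Y$, take $E''=I_\ell$ and $E'=S_{\Ku(Y)}(I_\ell)[-2]$:
\begin{itemize}
\item their classes form a basis of $\Knum(\Ku(Y))$;
\item the phase condition holds, since $S_{\Ku(Y)}$ raises phases of stable objects by an amount in $(1,2)$;
\item $\Ext^2(E'',E')=\Hom(I_\ell,S_{\Ku(Y)}I_\ell)\cong\Hom(I_\ell,I_\ell)^\vee\neq 0$.
\end{itemize}
So $\dim\Ext^1(E'',E')=-\chi(v'',v')$ is wrong in general, and no bound on the homological dimension of the heart can give it. What suffices is that $\Hom(E'',E'[k])=0$ for $k\le 0$ (by phases) and for $k\ge 3$ (by Serre duality, since $\phi(S_{\Ku(X)}E''[-k])<\phi(E'')-1<\phi(E')$). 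Then
\[
\dim\Ext^1(E'',E')=\dim\Ext^2(E'',E')-\chi(v'',v')\ge-\chi(v'',v'),
\]
with no control of $\Ext^2$ required.

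\textbf{Positivity of $-\chi(v'',v')$ and the base cases.} The real content is the inequality $\chi(v'',v')<0$, and it does not hold for every adjacent pair. It is guaranteed only once $-\chi(v,v)$, equivalently $\dim\rM_\sigma(v)$, is large enough. The base cases are exactly the finitely many classes where it fails, and these are not just the classes of minimal $|\chi(v,v)|$. For a quartic double solid, $S_{\Ku(X)}=\tau[2]$ acts trivially on $\Knum$, so $\chi$ is symmetric. Moreover $-\chi$ is unimodular and represents $1$ (by $[I_\ell]$), hence it is the standard form on $\bZ^2$. Take an orthonormal basis $s=[I_\ell]$, $u$. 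Every splitting of $s+u$ into two classes with determinant $\pm1$ has $\chi(v'',v')\ge 0$. So the class $s+u$, with $\chi(s+u,s+u)=-2$, cannot be reached by gluing and must be shown non-empty directly.

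\textbf{Smaller errors.} Your claim that $S_{\Ku(X)}$ and $[1]$ carry every primitive class to a small one is false: these actions have finite orbits on $\Knum(\Ku(X))$. They can only shorten the list of base cases. Your fallback is also not an argument, since positivity of $1-\chi(v,v)$ does not by itself force $\rM_\sigma(v)\neq\emptyset$.
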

\begin{proof}[Idea of proof]
By \cite{Kuz:3folds} we have that $\Knum(\Ku(X)) \cong \bZ^2$. The key point is then the following observation. Let $v, w \in \Knum(\Ku(X))$ and let $E_v$ and $E_w$ be two $\sigma$-stable objects with phases satisfying $\phi(E_w)- \phi(E_v) \in (0,1)$. Assume there exists a non-trivial extension $f \in \Hom(E_w, E_v[1])$ and let $E_f$ be the extended object given by the triangle
$$E_v \to E_f \to E_w \to E_v[1].$$
By definition $E_f$ has numerical class $v+w$. Note that the destabilizing factors of $E_f$ must lie in the parallelogram  spanned by $v$ and $w$. But if the latter does not contain points corresponding to classes in $\Knum(\Ku(X)) \cong \bZ^2$, equivalently when its area $v \times w=1$, then $E_f$ has to be stable.

Now for every primitive $v \in \Knum(\Ku(X))$ with norm in $\bZ^2$ greater than $1$, there exists a unique pair of classes $v_+$ and $v_-$ such that $v=v_++v_-$ and $v_- \times v_+=1$. Using the numerical properties of $\chi$ on $\Knum(\Ku(X))$ it is possible to show that $\chi(v_+, v_-)<0$ if the dimension of $\rM_\sigma(v)$ is smaller than a certain number depending on $\chi$. This condition together with the properties of $\sigma$ ensures the existence of a non-trivial extension $E_f$ between stable objects of class $v_-$ and $v_+$, which is stable by the above observation. This reduces the statement to check the non-emptiness of $\rM_\sigma(v)$ for a finite number of classes $v$ corresponding to moduli spaces of small dimension.
\end{proof}

In the case of cubic threefolds we further show the following results on the geometry of moduli spaces.

\begin{Thm}[\cite{LPZ:higher}]
Let $Y$ be a cubic threefold. For every $v \in \Knum(\Ku(Y))$ the moduli space $\rM_\sigma(v)$ is irreducible. Assume in addition that $v$ is primitive. Then $\rM_\sigma(v)$ is smooth projective of the expected dimension $1-\chi(v,v)$.
\end{Thm}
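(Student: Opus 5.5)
We argue by induction on the norm of $v$ in $\Knum(\Ku(Y)) \cong \bZ^2$, building on the proof of Theorem~\ref{thm:Fanononemptymoduli}. For a cubic threefold the Euler form $\chi$ on $\Knum(\Ku(Y))$ is explicit: in a suitable basis $\lambda_1,\lambda_2$ it is nonsymmetric with $\chi(v,v) = -(a^2+ab+b^2)$ for $v=a\lambda_1+b\lambda_2$. The Serre functor satisfies $S^3=[5]$ and acts as a rotation of order $6$ on the lattice. A Serre-invariant stability condition $\sigma$ exists, and all such conditions lie in a single $\widetilde{\mathrm{GL}}^+_2(\bR)$-orbit. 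Hence every $\rM_\sigma(v)$ is determined by $v$ up to rotation by the Serre action, and by \cite{PY, FP} we have $\mathrm{ext}^1(E,E)$ bounded by $1-\chi(v,v)$ plus a correction that vanishes when $v$ is primitive.

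\textbf{Smoothness for primitive $v$.} Let $E$ be $\sigma$-stable of primitive class $v$. Serre invariance gives $\Hom(E,E[2]) = \Hom(E, S(E)[-1])^\vee$. Because $S(E)$ is stable of phase $\phi(E)+\tfrac{5}{3}-2<\phi(E)+1$, one gets $\Hom(E,E[2])=0$ and likewise $\Hom(E,E[k])=0$ for $k\geq 2$. Therefore $\mathrm{ext}^1(E,E)=1-\chi(v,v)$ is constant, so the stack is smooth. Primitivity together with genericity of the Serre-invariant $\sigma$ (no walls for primitive $v$) gives stable $=$ semistable. By Theorem~\ref{thm:moduli}, $\rM_\sigma(v)$ is then a proper algebraic space that is smooth of the expected dimension. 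Projectivity will follow from the positivity lemma of Bayer–Macrì applied in families, or from the irreducibility argument below, which exhibits it as dominated by projective varieties.

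\textbf{Irreducibility.} This is the main step. Induct along the decomposition $v=v_++v_-$ with $v_-\times v_+=1$ used in Theorem~\ref{thm:Fanononemptymoduli}. Assume $\rM_\sigma(v_\pm)$ are irreducible. The extensions $E_f$ of $E_{v_+}$ by $E_{v_-}$ are stable and form a projective bundle $P$ over an open part of $\rM_\sigma(v_-)\times\rM_\sigma(v_+)$. Its fibre dimension is $-\chi(v_+,v_-)-1$, using vanishing of $\Hom$ and $\Ext^2$ between these objects. One checks that $\dim P = \dim \rM_\sigma(v)$, or at least that $P$ dominates a component, so the image of $P$ is an irreducible component.

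The obstacle is to show every component meets this locus. We would argue by deforming $\sigma$ inside the Serre-invariant orbit to a nearby stability condition $\sigma'$ on the wall where $v_\pm$ have equal phase. Every object in every component of $\rM_\sigma(v)$ must then be destabilized at the wall, and the destabilizing sequences, by the area-one condition, have Jordan–Hölder factors of classes $v_\pm$ only. Using the smooth, equidimensional structure and a dimension count of the loci of objects with other HN types, we would show the generic object of each component is such an extension, so the component equals $\overline{P}$.

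Since all $\sigma$ are related by $\widetilde{\mathrm{GL}}^+_2$, we would instead compare directly with the chamber structure in the numerical plane. For the base cases, small-norm classes such as $\lambda_1$ with moduli the Fano surface of lines and $\lambda_1+\lambda_2$ with moduli related to conics or the theta divisor, irreducibility is known from the literature, e.g.\ \cite{PS:survey}. For non-primitive $v=mv_0$, the analogous argument via direct sums and extensions of objects of class $v_0$ gives irreducibility of the good moduli space.
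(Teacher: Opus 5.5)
The survey only quotes this theorem from \cite{LPZ:higher} and gives no proof, so I am judging your argument on its own.

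\textbf{Smoothness and projectivity.} This part is fine once the shift is corrected. Serre duality gives $\Hom(E,E[2])\cong\Hom(E,S(E)[-2])^\vee$, which vanishes because $S(E)[-2]$ is stable of phase $\phi(E)-\tfrac13$. It is not $S(E)[-1]$, whose Hom from $E$ need not vanish. Stable $=$ semistable for primitive $v$ holds for any Serre-invariant $\sigma$, simply because $Z$ is injective on $\Knum(\Ku(Y))\otimes\bR$; no genericity is needed. Projectivity must come from ampleness of the Bayer--Macr\`i class. Being dominated by a projective variety does not make a proper algebraic space projective.

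\textbf{Irreducibility: the dimension count fails.} For generic $(E_-,E_+)$ you have $\Hom(E_+,E_-)=\Ext^2(E_+,E_-)=0$. Hence $\dim P=\dim\rM_\sigma(v_-)+\dim\rM_\sigma(v_+)-\chi(v_+,v_-)-1=\dim\rM_\sigma(v)+\chi(v_-,v_+)$. For the cubic threefold the antisymmetric part of $\chi$ is the determinant: $\chi(v_-,v_+)-\chi(v_+,v_-)=v_-\times v_+=1$. So $P$ has codimension $-1-\chi(v_+,v_-)$. Writing $q(v)=-\chi(v,v)$, this codimension is $0$ only for the classes with $q(v)=3$.

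Already for primitive $v$ with $q(v)=7$ the claim breaks. Here $\dim\rM_\sigma(v)=8$, $\{q(v_-),q(v_+)\}=\{1,3\}$ and $\chi(v_+,v_-)=-2$. So $P$ is a $\bP^1$-bundle over a $6$-dimensional base: a divisor, not a component. The codimension keeps growing with $q(v)$. Moreover, even when $\dim P=\dim\rM_\sigma(v)$, this only exhibits one component.

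\textbf{Irreducibility: there is no wall to cross.} Serre-invariant stability conditions form one $\widetilde{\mathrm{GL}}^+_2(\bR)$-orbit. This action changes neither the set of semistable objects nor the order of their phases. By Serre invariance, $Z$ is an $\bR$-linear isomorphism $\Knum(\Ku(Y))\otimes\bR\to\bC$. So $Z(v_-)$ and $Z(v_+)$ never become aligned inside the orbit, and there is no wall to deform to. A wall for $v$ would need a rank-one central charge. There all classes are collinear, so the area-one argument gives no control on Jordan--H\"older factors. And even at a genuine wall, the objects of a given component need not be destabilized.

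\textbf{What is missing.} You need an actual proof that every connected component of the smooth proper space $\rM_\sigma(v)$ meets the irreducible locus $\overline{P}$ (for primitive $v$, connected is the same as irreducible), or some other global connectedness argument. Nothing in the proposal supplies this. Your one-line treatment of non-primitive $v=mv_0$ rests on the same missing input.
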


We also obtain the following result on the geometry of fibers of Abel--Jacobi maps associated to moduli spaces. More precisely, denote by
\[J(Y):= \frac{H^{2,1}(Y)^*}{H_3(Y,\mathbb Z)}\]
the intermediate Jacobian of $Y$, which is an abelian fivefold (playing a crucial role in Clemens and Griffiths proof of irrationality of cubic threefolds). Choosing a base point $F_0$, we define the Abel--Jacobi map
$$\Phi_v \colon \rM_\sigma(v) \to J(Y), \quad F\mapsto c_2(F)-c_2(F_0)$$
using the (cycle-theoretic) second Chern class.

\begin{Thm}[\cite{LPZ:higher}] \label{thm:cubic3folds}
Let $Y$ be a cubic threefold. Let $v$ be a primitive class in $\Knum(\Ku(Y))$.
\begin{enumerate}
\item If $\dim \rM_\sigma(v)> 5$, then the Abel--Jacobi map $\Phi_v \colon \rM_\sigma(v)\to J(Y)$ is surjective with connected fibers. For a general point $c\in J(Y)$, the fiber $\rM_\sigma(v,c)$ is a smooth Fano variety with primitive canonical divisor class.
\item Assume that $w \in \Knum(\Ku(Y))$ is primitive and that $\dim \rM_\sigma(v)\geq 23$, $\dim \rM_\sigma(w)\geq 23$. Then for general points $c,c'\in J(Y)$, the fibers $\rM_\sigma(v,c)$ and $\rM_\sigma(w,c')$ are stably birational equivalent.    
\end{enumerate}       
\end{Thm}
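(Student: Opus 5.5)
We argue by induction on $\dim \rM_\sigma(v)=1-\chi(v,v)$, using the extension construction from the proof of Theorem~\ref{thm:Fanononemptymoduli}. For primitive $v$ with $\dim \rM_\sigma(v)$ large, we write $v=v_++v_-$ with $v_-\times v_+=1$. Every object obtained as a non-trivial extension $E_-\to E\to E_+$, with $E_\pm\in\rM_\sigma(v_\pm)$, is stable, because the parallelogram spanned by $v_\pm$ contains no lattice points.

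\textbf{Step 1 (part (1)).} We aim to exhibit a dominant rational map from a projective bundle $\bP(\Ext^1(E_+,E_-))$ over $\rM_\sigma(v_+)\times\rM_\sigma(v_-)$ onto $\rM_\sigma(v)$. The dimension count is
$$\dim\rM_\sigma(v_+)+\dim\rM_\sigma(v_-)-\chi(v_+,v_-)-1=\dim\rM_\sigma(v),$$
using bilinearity of $\chi$ and vanishing of $\Hom$ and $\Ext^2$ between the $E_\pm$. The vanishing of $\Ext^2$ comes from Serre duality $S^3=[5]$ together with phase bounds for Serre-invariant $\sigma$. Since $\rM_\sigma(v)$ is smooth of the expected dimension, the map is dominant onto a component.

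Next, the Abel--Jacobi map satisfies $\Phi_v(E)=\Phi_{v_+}(E_+)+\Phi_{v_-}(E_-)$ up to a constant. We prove surjectivity and connectedness of fibres of $\Phi_v$ inductively, starting from the base cases. These are the small moduli spaces: $\rM_\sigma(v)$ for $\dim\le 5$, namely the Fano surface of lines and the moduli space of dimension $4$ or $5$ related to conics and twisted cubics, where $\Phi$ is known to be birational or surjective onto $J(Y)$ (Clemens--Griffiths, Beauville). The addition map $J\times J\to J$ then gives surjectivity, and connected fibres follow from irreducibility of the sum fibres.

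For the Fano property, the fibre $\rM_\sigma(v,c)$ of a smooth morphism to an abelian variety has $K_{\rM(v,c)}=K_{\rM_\sigma(v)}|$. We compute $K_{\rM_\sigma(v)}$ via Grothendieck--Riemann--Roch on the universal family. Its first Chern class is $-\theta$-type: it equals the determinant of $\cHom_p(\cE,\cE)$ twisted by the Serre functor, which is nontrivial here. We show it is anti-ample on fibres using a Bayer--Macr\`i positivity divisor $\ell_\sigma$ proportional to $-K$. Primitivity comes from pairing with a line in the projective fibres constructed above.

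\textbf{Step 2 (part (2)).} Stable birationality follows by realizing both fibres, up to projective bundles, from a common source. From the dominant map above, $\rM_\sigma(v,c)$ is birational to a projective bundle over the fibre of $\rM_\sigma(v_+)\times\rM_\sigma(v_-)\to J$. Iterating the decomposition (a Euclidean-algorithm descent on $\bZ^2$) reduces both $v$ and $w$ to a common stratum. The hypothesis $\dim\ge 23$ should guarantee that each extension step has fibre dimension large enough to be a genuine projective bundle that is Zariski-locally trivial, so that the Brauer obstruction vanishes. It should also guarantee that the descent lands in a product of moduli spaces whose AJ fibres are stably birational to a fixed variety, e.g.\ products of fibres of $\rM(\lambda_1)\times\rM(\lambda_2)$.

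\textbf{Main obstacle.} The main obstacle is showing that the extension map is birational, or at least that the projective-bundle structures are generically injective and locally trivial. This requires that a general stable $E$ admits a unique such filtration, i.e.\ $\hom(E_-,E)=1$ generically. We would attack this with a wall-crossing argument: deform $\sigma$ to the wall where $v_\pm$ align, and show that $\rM_\sigma(v)$ is birationally the Brill--Noether-type locus of extensions.
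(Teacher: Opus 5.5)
Your proposal has a genuine gap, and it is in the dimension count on which everything else rests. The survey only quotes this theorem from the literature without proof, so I am judging your argument on its own terms.

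\textbf{The extension map is not dominant.} The Euler form on $\Knum(\Ku(Y))$ is not symmetric, so expanding $\chi(v,v)$ produces both cross terms:
\[
\dim\rM_\sigma(v)=\dim\rM_\sigma(v_+)+\dim\rM_\sigma(v_-)-1-\chi(v_+,v_-)-\chi(v_-,v_+).
\]
Your bundle $\bP(\Ext^1(E_+,E_-))$ over $\rM_\sigma(v_+)\times\rM_\sigma(v_-)$ therefore has dimension $\dim\rM_\sigma(v)+\chi(v_-,v_+)$, not $\dim\rM_\sigma(v)$.

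This correction term is never zero in the range of part (1):
\begin{enumerate}
\item In the basis $\lambda_1,\lambda_2$ one has $\chi(x,y)=-x_1y_1-x_1y_2-x_2y_2$, hence $\chi(x,y)-\chi(y,x)=-\det(x,y)$. For your pair with $v_-\times v_+=1$ the two cross terms differ by exactly $1$.
\item Their sum is $-2B(v_+,v_-)$, where $B$ is the symmetrized form, $B(v_+,v_-)^2=n(v_+)n(v_-)-\tfrac34$ and $n(x)=-\chi(x,x)$.
\item Since $\chi(v_+,v_-)<0$, we get $B(v_+,v_-)>0$. If $\dim\rM_\sigma(v)\ge 8$, then $n(v_+)=n(v_-)=1$ is impossible (it gives $n(v)=3$), so $n(v_+)n(v_-)\ge 3$ and the sum is $\le -3$. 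Hence both cross terms are $\le -1$.
\end{enumerate}
Concretely, for $v=2\lambda_1+\lambda_2$ (dimension $8$) the pair is $\{\lambda_1,\lambda_1+\lambda_2\}$. The cross terms are $-2$ and $-1$, and your bundle has dimension $6$ or $7$.

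So stable extensions of type $(v_+,v_-)$ form a proper Brill--Noether-type stratum of codimension at least $-\chi(v_-,v_+)\ge 1$, and a general $E$ admits no such filtration at all. This is like a general stable rank-two bundle of odd degree on a curve of genus $\ge 3$ not being an extension of line bundles of adjacent degrees. Your ``main obstacle'' is therefore misdiagnosed: what fails is dominance, not generic injectivity. No wall-crossing can make $\rM_\sigma(v)$ birational to a locus of strictly smaller dimension. Nor is there a wall to cross: for Serre-invariant $\sigma$ the central charge intertwines $S$ (which has no real eigenvector on $\Knum(\Ku(Y))\otimes\bR$) with an invertible linear map of $\bC$. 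So $Z$ is injective, $Z(v_\pm)$ are never collinear, and a primitive class has no walls while $Z$ stays injective.

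\textbf{What survives and what does not.}
\begin{enumerate}
\item Surjectivity of $\Phi_v$ survives. Every pair $(E_+,E_-)$ has a nonzero extension, which is stable, so the image of $\Phi_v$ contains a translate of $\Phi_{v_+}(\rM_\sigma(v_+))+\Phi_{v_-}(\rM_\sigma(v_-))$. You can induct from there.
\item Your base cases need correcting. Primitive classes have $\dim\rM_\sigma(v)=1+n(v)\in\{2,4,8,14,\dots\}$. The $5$-dimensional space belongs to the non-primitive class $2\lambda_i$, and in dimensions $2$ and $4$ the map $\Phi$ cannot be surjective.
\item Connectedness of the fibres, the description of the general fibre, and all of part (2) rely on $\rM_\sigma(v,c)$ being birational to a projective bundle over a fibre of $\rM_\sigma(v_+)\times\rM_\sigma(v_-)\to J(Y)$. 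That is false, so the Euclidean descent says nothing about general fibres. You need a construction that genuinely sweeps out $\rM_\sigma(v)$.
\item In the Fano step, the positivity lemma only makes $\ell_\sigma$ strictly positive on curves. Strict nefness is not ampleness, so ampleness of $-K$ on the fibres needs a separate input, for example ampleness of $\ell_\sigma$ on $\rM_\sigma(v)$.
\end{enumerate}
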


This shows that moduli spaces of stable objects in $\Ku(Y)$ behave similarly to moduli spaces of vector bundles on a curve of genus $g\geq 2$. This motivates the wording ``noncommutative curve'' when referring to $\Ku(Y)$.

As a consequence of Theorem~\ref{thm:cubic3folds} we obtain the following statement.

\begin{Cor}
Let $Y$ be cubic threefold. Then for every primitive $v\in\Knum(\Ku(Y))$ with $\chi(v,v)\leq -4$, the maximal rationally connected quotient of $\rM_\sigma(v)$ is the intermediate Jacobian $J(Y)$.
\end{Cor}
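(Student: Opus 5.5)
The plan is to identify the maximal rationally connected quotient through the Abel--Jacobi map $\Phi_v \colon \rM_\sigma(v) \to J(Y)$ of Theorem~\ref{thm:cubic3folds}. First I would translate the hypothesis $\chi(v,v)\leq -4$ into a dimension bound. By the result of \cite{LPZ:higher} stated above, for primitive $v$ the space $\rM_\sigma(v)$ is smooth projective and irreducible of dimension $1-\chi(v,v)\geq 5$. Since $\Knum(\Ku(Y))\cong\bZ^2$ and $\chi$ is an explicit form there, the possible values of $\chi(v,v)$ are restricted. I expect the only borderline value to be $\dim \rM_\sigma(v)=5$, and would check which primitive classes attain it.

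In the main case $\dim \rM_\sigma(v)>5$, Theorem~\ref{thm:cubic3folds}(1) shows that $\Phi_v$ is surjective with connected fibers. Its general fiber $\rM_\sigma(v,c)$ is a smooth Fano variety, hence rationally connected by Campana and Koll\'ar--Miyaoka--Mori. On the other side, $J(Y)$ is an abelian variety and so contains no rational curves. Every rational curve in $\rM_\sigma(v)$ is therefore contracted by $\Phi_v$. The standard characterization then applies: a proper surjective morphism with rationally connected general fiber onto a variety not uniruled, here a base containing no rational curves at all, is the MRC fibration. The non-uniruled condition follows from the Graber--Harris--Starr theorem. This identifies the MRC quotient with $J(Y)$.

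The remaining case $\dim \rM_\sigma(v)=5$ is where I expect the main difficulty, since Theorem~\ref{thm:cubic3folds}(1) does not apply there. I would first try to show that $\Phi_v$ is birational, or even an isomorphism, onto $J(Y)$. The model example is a class $v$ for which $\rM_\sigma(v)$ is identified with a known five-dimensional space related to twisted cubics or conics, whose Abel--Jacobi map is known to be birational. Alternatively, if $\chi(v,v)=-4$ turns out to give dimension exactly $5$ for all such classes, I would argue as follows. Using autoequivalences of $\Ku(Y)$ that act transitively on primitive classes of the same norm, for instance the Serre functor, $\Phi_v$ becomes a degree-one map between smooth projective fivefolds. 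Then $\rM_\sigma(v)$ is birational to $J(Y)$, which is its own MRC quotient since abelian varieties are not uniruled.
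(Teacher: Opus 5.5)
Your argument for the case $\dim\rM_\sigma(v)>5$ is the paper's: Theorem~\ref{thm:cubic3folds}(1) makes $\Phi_v$ surjective with connected fibers whose general member is smooth Fano, hence rationally connected. Since $J(Y)$ is an abelian variety and contains no rational curves, $\Phi_v$ is the MRC fibration.

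\textbf{The gap.} The gap is in the case you single out as the main difficulty, $\chi(v,v)=-4$, i.e.\ $\dim\rM_\sigma(v)=5$. You leave it open, and the argument you sketch for it does not work. Autoequivalences such as the Serre functor only identify moduli spaces of different classes with one another. They say nothing about the degree of $\Phi_v$, so nothing in your sketch forces $\Phi_v$ to be birational onto $J(Y)$. You also do not exhibit the five-dimensional ``model'' space you propose to compare with.

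\textbf{The missing step.} You need the numerical check showing this case is empty; it is the only thing the corollary adds to Theorem~\ref{thm:cubic3folds}(1). A Riemann--Roch computation on $Y$ gives $\Knum(\Ku(Y))\cong\bZ^2$. In a suitable basis $\lambda_1,\lambda_2$, with $\lambda_1$ the class of the ideal sheaf of a line, one has $\chi(a\lambda_1+b\lambda_2,\,a\lambda_1+b\lambda_2)=-(a^2+ab+b^2)$. The integer $a^2+ab+b^2$ is even only when $a$ and $b$ are both even. Hence $\chi(v,v)$ is odd for every primitive $v$, and in particular $\chi(v,v)\neq -4$.

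In fact the values attained by primitive classes are $-1,-3,-7,-13,\dots$, so the moduli spaces have dimensions $2,4,8,14,\dots$. These are the Fano surface of lines, the four-dimensional space, and the eight-dimensional spaces whose Abel--Jacobi fibers are birational to $Y$. So for primitive $v$ the hypothesis $\chi(v,v)\le -4$ already forces $\chi(v,v)\le -7$, i.e.\ $\dim\rM_\sigma(v)\ge 8>5$. Theorem~\ref{thm:cubic3folds}(1) then applies to every class in the statement, and your main-case argument completes the proof. No separate five-dimensional analysis is needed, since that case never occurs.
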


The above result may have interesting applications in the study of Hilbert schemes of curves on cubic threefolds. Indeed, if one can show the stability of a certain twist of the ideal sheaf of a smooth irreducible curve $C$ on $Y$ or of its projection in $\Ku(Y)$, then one would deduce a birational description of the component of the Hilbert scheme containing $C$ as a moduli space of stable objects, and that its maximally rationally connected quotient is the intermediate Jacobian.

For technical reasons, we had to impose the condition that the moduli spaces have dimension $\geq 23$ in the second part of Theorem~\ref{thm:cubic3folds}. Since the general fiber of eight-dimensional moduli spaces is birational to the cubic threefold $Y$, it would be very interesting to improve this result if possible, answering to the following question.

\begin{Ques}
Let $Y$ be a cubic threefold. Let $v \in \Knum(\Ku(X))$ be a primitive class. Is the general fiber of the Abel--Jacobi map $\Phi_v$ stably birational to $Y$?    
\end{Ques}

Other interesting cases to study from this perspective are quartic double solids and GM threefolds. We expect that the strategy can be adapted to this setting, although there are numerical differences which make us to expect that the fiber of the Abel--Jacobi map should have numerically trivial canonical class.

\begin{Ques}[\cite{LPZ:higher}, Question 8.11]
Let $X$ be a quartic double solid or a Gushel--Mukai threefold. Let $v \in \Knum(\Ku(X))$ be a primitive class. What is the Kodaira dimension of a general fiber of the Abel--Jacobi map $\Phi_v$?
\end{Ques}

Going back to the non-emptiness of moduli spaces, it would be interesting to generalize the strategy in Theorem~\ref{thm:Fanononemptymoduli} to noncommutative curves with numerical Grothendieck group of rank $3$. This would be useful to answer the following question.

\begin{Ques}
Let $X$ be a prime Fano threefold of index $1$ and degree $\in \lbrace 2, 4, 6, 8 \rbrace$. For which $v \in \Knum(\Ku(X))$ is the moduli space $\rM_\sigma(v)$ non-empty? If $v$ is primitive, is the moduli space $\rM_\sigma(v)$ non-empty if and only if the expected dimension $1-\chi(v,v) \geq 0$?
\end{Ques}

\bibliographystyle{halpha}    
\bibliography{all}                      

\end{document}